\newcommand{\cB}{\mathcal{B}}
\newcommand{\cE}{\mathcal{E}}
\newcommand{\cF}{\mathcal{F}}
\newcommand{\cG}{\mathcal{G}}
\newcommand{\cH}{\mathcal{H}}
\newcommand{\cL}{\mathcal{L}}
\newcommand{\cO}{\mathcal{O}}
\newcommand{\cT}{\mathcal{T}}
\newcommand{\fa}{\mathfrak{a}}
\newcommand{\fb}{\mathfrak{b}}
\newcommand{\fS}{\mathfrak{S}}
\newcommand{\CC}{\mathbf{C}}
\newcommand{\FF}{\mathbf{F}}
\newcommand{\QQ}{\mathbf{Q}}
\newcommand{\RR}{\mathbf{R}}
\newcommand{\ZZ}{\mathbf{Z}}
\newcommand{\wG}{\widetilde{G}}
\newcommand{\wH}{\widetilde{H}}
\newcommand{\Af}{\mathbf{A}_\mathrm{f}} % \AA already means something
\newcommand{\Qp}{\QQ_p}
\newcommand{\Zp}{\ZZ_p}
\newcommand{\QQbar}{\overline{\QQ}}
\DeclareMathOperator{\br}{br}
\DeclareMathOperator{\GL}{GL}
\DeclareMathOperator{\GSp}{GSp}
\DeclareMathOperator{\mom}{mom}
\DeclareMathOperator{\pr}{pr}
\DeclareMathOperator{\Sym}{Sym}
\newcommand{\et}{\text{\textup{\'et}}}
\newcommand{\hw}{\mathrm{hw}}
\newcommand{\Iw}{\mathrm{Iw}}
\newcommand{\univ}{\mathrm{univ}}
\newcommand{\tps}[2]{\texorpdfstring{#1}{#2}}
\newcommand{\into}{\hookrightarrow}
\newcommand{\onto}{\twoheadrightarrow}
\theoremstyle{plain}
\newtheorem{theorem}{Theorem}[subsection]
\newtheorem{lemma}[theorem]{Lemma}
\newtheorem{proposition}[theorem]{Proposition}
\newtheorem{corollary}[theorem]{Corollary}
\newtheorem{definition}[theorem]{Definition}
\newtheorem{assumption}[theorem]{Assumption}
\theoremstyle{remark}
\declaretheorem[name=Remark,sibling=theorem,qed={\lower-0.3ex\hbox{$\diamond$}}]{remark}
\renewcommand{\le}{\leqslant}
\renewcommand{\ge}{\geqslant}
\newcommand{\eQ}{\mathop{e_Q'}}
\newcommand{\eQG}{\mathop{e_{Q_G}'}}
\title[Spherical varieties and $p$-adic families]
{Spherical varieties and $p$-adic families of cohomology classes}
\author{David Loeffler}
\address[D.~Loeffler]{Faculty of Mathematics and Computer Science, UniDistance Suisse, Schinerstrasse 18, 3900 Brig, Switzerland}
\email{david.loeffler@unidistance.ch}
\urladdr{\href{http://orcid.org/0000-0001-9069-1877}{0000-0001-9069-1877}}
\author{Robert Rockwood}
\address[R.~Rockwood]{Department of Mathematics, King's College London, Strand, London
WC2R 2LS, United Kingdom}
\email{robert.rockwood@kcl.ac.uk}
\author{Sarah Livia Zerbes}
\address[S.L.~Zerbes]{Department of Mathematics, ETH Z\"urich, R\"amistrasse 101, 8092 Z\"urich, Switzerland}
\email{sarah.zerbes@math.ethz.ch}
\urladdr{\href{http://orcid.org/0000-0001-8650-9622}{0000-0001-8650-9622}}
\thanks{We gratefully acknowledge support from the following research grants:
EPSRC Standard Grant EP/S020977/1 and ERC Consolidator Grant 101001051 ``ShimBSD'' (Loeffler);
Royal Society URF Enhancement Award RGF/EA/180212 (Rockwood).
}
\begin{document}

\begin{abstract}
 We prove a ``twist-compatibility'' result for $p$-adic families of cohomology classes associated to symmetric spaces. This shows that a single family of classes (lying in a finitely-generated Iwasawa module) interpolates classical cohomology classes of many different weights, including twists by Gr\"ossencharacters of possibly non-trivial infinity-type. This subsumes and generalises a number of prior results relating to Euler systems and $p$-adic $L$-functions, and we conclude with some novel applications to Euler systems for $\GSp_4$, $\GSp_4 \times \GL_2$, and $\GSp_4 \times \GL_2 \times \GL_2$.
\end{abstract}

\maketitle

\begin{center}
\emph{This paper is dedicated to Massimo Bertolini, to mark the occasion of his 60th birthday. Massimo's beautiful work has been an inspiration to us throughout our careers and we wish him all the best for the future.}
\end{center}

\section{Introduction}

 This article is a sequel to the article \cite{loeffler-spherical} of the first author. In the previous article, we constructed norm-compatible families of cohomology classes for Shimura varieties and other symmetric spaces, associated to pairs of reductive groups satisfying an ``open orbit'' condition. This very general construction subsumes a number of well-known results in Iwasawa theory, including the norm-compatibility of modular symbols and Heegner points for $\GL_2$, as well as more recent constructions such as the Euler system for $\GSp_4$ constructed in \cite{LSZ21}.

 In this paper, we consider varying the algebraic weight defining the coefficient system for our cohomology. Our main result, \cref{mainthm}, shows that the norm-compatible families constructed in \cite{loeffler-spherical} interpolate classical cohomology classes of many different weights (including twists by Gr\"ossencharacters of possibly non-trivial infinity-type). Several specific instances of this result are already known -- see \cref{rmk:knowncases} below -- and form an important ingredient in the proofs of ``explicit reciprocity laws'' for Euler systems and $p$-adic $L$-functions. The general theorem proved here subsumes all of these earlier results, and we conclude with several new applications of this theory to Euler systems for $\GSp_4$, $\GSp_4 \times \GL_2$, and $\GSp_4 \times \GL_2 \times \GL_2$.

\section{Control theorems for ordinary cohomology}

 The aim of this section is to prove a general control result (\cref{thm:rockwood}) for ``$Q$-ordinary'' cohomology groups associated to a general parabolic subgroup $Q$. Our formulation is heavily influenced by the work of Urban \cite{urban05} in the setting of Coleman theory; however, unlike Urban we obtain integral results, without inverting $p$ in the coefficients, as a consequence of imposing the stronger condition of $Q$-ordinarity rather than just $Q$-finite-slope. These results are adapted from the second author's 2022 University of Warwick PhD thesis.

 \subsection{Setup} \label{sect:setup}

  Let $p$ be a prime. We consider the following situation:

  \begin{itemize}
   \item $\cG$ is a connected reductive group over $\QQ$.

   \item We assume there exists an unramified $\Zp$-model $G$ of $\cG$, i.e.~a reductive group scheme $G$ over $\Zp$ together with an isomorphism $\cG \times_{\QQ} \Qp \cong G \times_{\Zp} \Qp$, and we fix a choice of such a model.

   \item $Q$ is a parabolic subgroup of $G$, and $\bar{Q}$ a choice of opposite parabolic; thus $L = Q \cap \bar{Q}$ is a Levi subgroup of $Q$, and the ``big cell'' $\overline{N} \times L \times N$ is an open subscheme of $G$ over $\Zp$, where $N$ is the unipotent radical of $Q$ (and similarly $\overline{N}$). Note that $Q$ and $\bar{Q}$ are local at $p$; we do not assume that they come from parabolic subgroups of $\cG$ defined over $\QQ$.

   \item $S$ denotes the torus $L / L^{\mathrm{der}}$, and $X^\bullet(S)$ its character lattice.
   We let $X_+^\bullet(S)$ be the cone of dominant weights (relative to $Q$).

   \item $A$ is the maximal $\Qp$-split subtorus in the centre of $L$.
  \end{itemize}

  We choose a torus $S^0 \subset S$, and let $L^0$ and $Q^0$ be its preimages in $L$ and $Q$ respectively (so $Q^0 = L^0 \ltimes N$). We write $\fS$ for the profinite abelian group $L(\Zp) / L^0(\Zp) \subseteq (S/S^0)(\Zp)$. The base ring for our constructions will be the Iwasawa algebra (completed group ring) $\cO[[\fS]]$, for $\cO$ a finite extension of $\Zp$. Note that $\fS$ is $p$-torsion-free (since $S / S^0$ splits over an unramified extension).

  We shall also make the following global assumption (Milne's ``Axiom (SV5)''):

  \begin{assumption}
   The torus $\mathcal{Z} = Z(\cG)^\circ$ (the identity compononent of the centre of $\cG$) is isogenous to the product of a $\QQ$-split torus and an $\RR$-isotropic torus. Equivalently, $\mathcal{Z}(\QQ)$ is discrete in $\mathcal{Z}(\Af)$.
  \end{assumption}

  \begin{remark}
   The assumption (SV5) can be weakened, but at the cost of more complicated statements: very roughly, if $\mathcal{Z}$ contains a torus which is $\RR$-split but not $\QQ$-split, we need to assume that this torus is contained in $S^0$. An important example where (SV5) does \textit{not} hold is the group $\cG = \operatorname{Res}_{F/\QQ} \GL_2$ for $F$ a totally real field; this case will be treated in detail in forthcoming work of Arshay Sheth (see \cite{sheth24a}).
  \end{remark}

 \subsection{Algebraic representations}

  Let us fix a maximal torus $T \subset L$, and a finite extension $K / \Qp$, with ring of integers $\cO$, such that $T$ splits over $K$. We can and do assume that $K$ is unramified (since $G$ has a smooth integral model).

  For $\lambda \in X^\bullet_+(S)$, there is a unique isomorphism class of irreducible representations $(\rho_\lambda, V_\lambda)$ of $G_{/K}$ of highest weight $\lambda$ (with respect to any choice of Borel contained in $Q$). % (We drop $G$ from the notation if it is clear from context.)
  A representative of this isomorphism class can be constructed using the  Borel--Weil theorem, as the space of all polynomials
  \[ \{ f \in K[G] : f(\overline{n} \ell g) = \lambda(\ell) f(g)\quad \forall \overline{n}\in \overline{N}, \ell \in L, g \in G\}, \]
  with $G$ acting by right-translation. In particular, the highest weight space of $V_{\lambda}$ has a unique basis vector $f_{\lambda}^{\hw}$ whose restriction to the big Bruhat cell $\overline{N} L N$ is given by $\overline{n} \ell n \mapsto \lambda(\ell)$. This is also the highest \emph{relative} weight space of $V_\lambda$, i.e.~the highest-weight eigenspace for the action of $A$.

 \subsection{Integral lattices} Let $\lambda \in X^\bullet_+(S)$.

  \begin{definition}
   An \emph{admissible lattice} in $V_{\lambda}$ is an $\cO$-lattice $\cL \subset V_{\lambda}$ which is invariant under $G_{/\cO}$ (i.e.~$G(R)$ preserves $\cL \otimes R$ for every $\cO$-algebra $R$), and whose intersection with the highest-weight subspace is $\cO \cdot f_{\lambda}^{\hw}$.
  \end{definition}

  We recall the following standard facts, which can be found in \cite{lin92} for example:
  \begin{enumerate}[(a)]
   \item There are only finitely many admissible lattices in $V_{\lambda}$.
   \item Every admissible lattice is the direct sum of its intersections with the weight spaces of $V^G_\lambda$.
   \item There are unique maximal and minimal admissible lattices $\cL^{\max}$ and $\cL^{\min}$ such that every admissible $\cL$ satisfies $\cL^{\max} \supseteq \cL \supseteq \cL^{\min}$.
   \item If we realise $V_\lambda$ via the Borel--Weil construction as above, then the maximal admissible lattice $\cL^{\max}$ is given by the intersection of $V_{\lambda}$ with the coordinate ring of $G_{/\cO}$.
   \item The dual of an admissible lattice in $V_\lambda$ is an admissible lattice in $V_{\lambda}^\vee$. In particular, the dual of the maximal lattice is the minimal lattice and vice versa.
  \end{enumerate}

  We shall henceforth always write $V_{\lambda, \cO}$ for the \emph{maximal} admissible lattice, realised via the Borel--Weil construction as above, and $V^{\min}_{\lambda, \cO}$ for the minimal lattice.

  \begin{remark}
   Note that if $G = \GL_2$ and $V_\lambda = \Sym^n(K^2)$ is the $n$-th symmetric power of the standard representation, then $\Sym^n(\cO^2)$ is the maximal lattice, and the module $\operatorname{TSym}^n(\cO^2)$ of symmetric tensors (which features prominently in \cite{KLZ17}) is the minimal lattice.
  \end{remark}

  \begin{definition}
   Let $\Sigma^+$ denote the monoid of cocharacters $\eta \in X_\bullet(A)$ which are dominant with respect to $Q$, so that $\langle \eta, \phi\rangle \ge 0$ for every relative root $\phi$. We let $\Sigma^{++} \subset \Sigma^+$ be the ideal of elements that are strictly  dominant, so $\langle \eta, \phi \rangle > 0$ for all $\phi$.
  \end{definition}

  \begin{lemma}
   \label{lemma:preservelattice}
   If $\eta \in \Sigma^+$, then the endomorphism $p^{\langle \eta, \lambda\rangle} \rho_\lambda\left(\eta(p)^{-1}\right)$ of $V_\lambda$ acts on every weight space as a non-negative power of $p$, and hence restricts to an endomorphism of $\cL$ for any admissible lattice $\cL$. If $\eta \in \Sigma^{++}$ then this endomorphism acts as a strictly positive power of $p$ on every weight space except the highest one.
  \end{lemma}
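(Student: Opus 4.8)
The plan is to reduce the statement to an assertion about the $A$-weights of $V_\lambda$, where the hypothesis $\lambda\in X^\bullet(S)$ will do the work. Since $\eta(p)\in A(\Qp)\subseteq T(\Qp)$, the operator $R_\eta:=p^{\langle\eta,\lambda\rangle}\rho_\lambda(\eta(p)^{-1})$ is diagonal for the $T$-weight decomposition of $V_\lambda$, acting on the weight space $V_\lambda[\mu]$ by the scalar $p^{\langle\eta,\lambda\rangle}\mu(\eta(p))^{-1}=p^{\langle\eta,\lambda-\mu\rangle}$ (the pairing only sees $\mu|_A$). So it is enough to show $\langle\eta,\lambda-\mu\rangle\ge 0$ for every weight $\mu$ of $V_\lambda$ and every $\eta\in\Sigma^+$, with strict inequality whenever $\eta\in\Sigma^{++}$ and $\mu\ne\lambda$. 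Granting this, $R_\eta$ multiplies each weight subspace of an admissible lattice $\cL$ by a non-negative integral power of $p$, and since $\cL$ is the direct sum of its weight subspaces it is preserved.

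For the weight estimate I would first establish that $V_\lambda=U(\overline{\mathfrak n})v_\lambda$, where $v_\lambda=f_\lambda^{\hw}$ and $\overline{\mathfrak n}=\operatorname{Lie}(\overline N)$. Indeed, $v_\lambda$ is a highest-weight vector for a Borel $B\subseteq Q$, so it is annihilated by $\operatorname{Lie}(N)$ and is also a highest-weight vector for $L$ (with respect to $B\cap L$), generating the irreducible $L$-subrepresentation of highest weight $\lambda$; but $\lambda$ is a character of $S=L/L^{\mathrm{der}}$, so this $L$-subrepresentation is one-dimensional, i.e.\ $U(\mathfrak l)v_\lambda=K\cdot v_\lambda$. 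Applying the PBW theorem to the triangular decomposition $\mathfrak g=\overline{\mathfrak n}\oplus\mathfrak l\oplus\mathfrak n$ then gives $V_\lambda=U(\mathfrak g)v_\lambda=U(\overline{\mathfrak n})\,U(\mathfrak l)\,U(\mathfrak n)\,v_\lambda=U(\overline{\mathfrak n})v_\lambda$. Consequently any weight $\mu$ of $V_\lambda$ is the $T$-weight of a monomial $X_1\cdots X_k\,v_\lambda$ with the $X_i\in\overline{\mathfrak n}$ root vectors, and then $\lambda|_A-\mu|_A=\psi_1+\dots+\psi_k$, the sum of the $k$ positive relative roots obtained by negating the $A$-weights of the $X_i$. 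Hence $\langle\eta,\lambda-\mu\rangle=\sum_{i=1}^k\langle\eta,\psi_i\rangle\ge 0$ for $\eta\in\Sigma^+$; and if $\eta\in\Sigma^{++}$ every term is strictly positive, so the sum is positive unless $k=0$ --- but the only monomial of $T$-weight $\lambda$ is $v_\lambda$ itself, so $k=0$ forces $\mu=\lambda$. This yields exactly the required (non-)strict inequalities, and it pins down the exceptional weight space as $V_\lambda[\lambda]=K\cdot v_\lambda$, which indeed coincides with the highest relative weight space.

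I expect the only real content to be the identification $V_\lambda=U(\overline{\mathfrak n})v_\lambda$ --- this is where $\lambda\in X^\bullet(S)$ is genuinely used --- after which the diagonalisation in the first paragraph, the bookkeeping with positive relative roots, and the deduction for an arbitrary admissible lattice from the direct-sum decomposition of admissible lattices into weight subspaces are all routine.
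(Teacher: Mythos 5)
Your proof is correct and fills in the details of what the paper dismisses as ``an easy explicit check.'' The route you take --- diagonalise $\rho_\lambda(\eta(p)^{-1})$ over $T$-weight spaces, observe the eigenvalue on $V_\lambda[\mu]$ is $p^{\langle\eta,\lambda-\mu\rangle}$, and then control the $A$-restriction of $\lambda-\mu$ via the identity $V_\lambda=U(\overline{\mathfrak n})v_\lambda$ (which uses $\lambda\in X^\bullet(S)$ to collapse $U(\mathfrak l)v_\lambda$ to a line) --- is the natural one and surely the argument the authors have in mind; the final deduction for lattices correctly invokes the fact that an admissible lattice is the direct sum of its weight components.
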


  \begin{proof}
   This is an easy explicit check.
  \end{proof}

 \subsection{Cohomology of symmetric spaces}

  Let us fix a prime-to-$p$ level group $K^p$, which we shall suppose to be \emph{neat}.\footnote{See \cite[\S 0.1]{pink90} for a definition of this condition (which implies that the intersection of $K^p$ with any conjugate of $G(\QQ)$ is torsion-free) and a proof that any open compact subgroup contains a neat subgroup of finite index.} We define $H^*(U, \cO)$, for $U \subset G(\Qp)$ open compact, to mean \emph{one} of the following:
  \begin{itemize}
   \item Betti cohomology of the locally symmetric space $Y_G(K^p U)$, viewed as a real manifold, with coefficients in $\cO$.

   \item Supposing $\cG$ to be equipped with a choice of Shimura datum: the \'etale cohomology (with $\cO$ coefficients) of the canonical model of the Shimura variety of level $K^p U$, base-extended from the reflex field $E$ to $\QQbar$ (so the resulting module has a $\operatorname{Gal}(\QQbar/E)$-action).

   \item Again in the Shimura-variety setting, and supposing that the Shimura datum is of abelian type, and $\mathcal{Z}$ splits over a CM field: the \'etale cohomology of the canonical integral model of the Shimura variety over $\cO_E[1/S]$, where $E$ is the reflex field, and $S$ is a sufficiently large finite set of primes. In this case, we may also allow arbitrary cyclotomic twists of the coefficients.
  \end{itemize}

  One can also consider cohomology with compact support (in any of the above settings). More generally, since we may regard an algebraic representation of $G$ as a coefficient sheaf (for any of the above cohomology theories), we can make sense of $H^*(U, V_{\lambda, \cO})$ for any $\lambda$, as long as we assume $U \subseteq G(\Zp)$.

  \begin{remark}
   For the existence of canonical integral models of abelian-type Shimura varieties, see \cite[Theorem 2.2.1]{lovering17}; it suffices to take $S$ to contain $p$ and all places where $K^p$ is not hyperspecial. (In particular, $S$ can be chosen independently of $U$.)
  \end{remark}

  Note that in all three cases the groups $H^*(U, V_{\lambda, \cO})$ are finitely-generated over $\cO$. For Betti cohomology, and for \'etale cohomology over $\QQbar$, this is well-known. The case of \'etale cohomology of $\cO_E[1/S]$-models is slightly more delicate. We can express the \'etale cohomology of the integral model as the limit of a Hochschild--Serre spectral sequence whose $E_2$ page is $H^i(E^S / E, H^j_{\et}(Y_G(K^p U)_{\QQbar}, \dots))$, where $E^S$ is the maximal algebraic extension of $E$ unramified outside $S$. As the functors $H^i(E^S / E, -)$ map finitely-generated $\cO$-modules to finitely-generated $\cO$-modules, and vanish for $i \gg 0$, the result follows.

  \begin{remark}
   This would not work if we worked with the canonical model over $E$, rather than over $\cO_E[1/S]$, since the Galois cohomology functors $H^i(E, -)$ without ramification restrictions do not send finitely-generated modules to finitely-generated modules.
  \end{remark}

 \subsection{Hecke actions}

  For general $\lambda$, the functors $H^*(-, V_{\lambda, \cO})$ are \emph{Cartesian cohomology functors} for $G(\Zp)$, in the sense of \cite[Definition 2.1.3]{loeffler-spherical}; that is, given open compacts $U, V \subseteq G(\Zp)$ and $g \in G(\Zp)$ such that $g^{-1} U g \subseteq V$, we have pullback maps $[g]^* : H^*(V, V_{\lambda, \cO}) \to H^*(U, V_{\lambda, \cO})$, and pushforward maps $[g]_*$ in the opposite direction, satisfying an appropriate compatibility relation involving double cosets.

  However, if the coefficients are non-trivial, they are typically not cohomology functors for the whole of $G(\Qp)$ since this group does not act on the lattice $V_{\lambda, \cO}$. We shall work around this by renormalising our Hecke operators using \cref{lemma:preservelattice}.

  \begin{definition}\label{def:integralHecke}
   Suppose $U \subseteq G(\Zp)$. Then, for any $\lambda \in X^\bullet_+(S)$ and $\eta \in \Sigma^+$, we can define a normalised Hecke operator acting on $H^*(U, V_{\lambda, \cO})$ by
   \[ \cT'_\eta = p^h [U\eta(p^{-1}) U], \qquad h = {\langle \lambda, \eta \rangle}. \]
  \end{definition}

  \begin{remark}
   Recall $\eta$ is a cocharacter $\mathbf{G}_m \to A$, so $\eta(p^{-1}) \in A(\Qp) \subseteq G(\Qp)$. In the special cases of this theory we have previously worked out, the scalar factor $p^h$ appearing above is hidden from view, since we had always chosen $\eta$ and $\lambda$ such that $\langle \eta, \lambda\rangle = 0$.
  \end{remark}

  \begin{definition}
   We say an open compact subgroup $U \subset G(\Zp)$ has an \emph{Iwahori decomposition} (with respect to $Q$) if we have $U = \bar{N}_U L_U N_U$, where $N_U = U \cap N(\Zp)$ etc, and $N_U$ and $\bar{N}_U$ are the products of their intersections with the relative root spaces for $A$.
  \end{definition}

  The prototypical examples are the parahoric subgroups $J = \{ g \in G(\Zp): g \bmod p \in Q(\FF_p)\}$ and its opposite $\bar{J}$.

  \begin{proposition}
   \label{prop:idempotent}
   If $U$ has an Iwahori decomposition, then the operators $\cT'_\eta$ for varying $\eta$ define an action of $\Sigma^+$ on $H^*(U, V_{\lambda, \cO})$; and if we choose any $\eta_0 \in \Sigma^{++}$, the \emph{anti-ordinary idempotent}
   \[ \eQ = \lim_{k \to \infty} (\cT'_{\eta_0})^k \]
   is independent of the choice of $\eta_0$, and cuts out the maximal direct summand on which the operators $\cT_\eta'$ are all invertible.
  \end{proposition}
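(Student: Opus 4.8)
The plan is to isolate a single Hecke-algebra computation and then apply the standard Fitting-decomposition argument over the $p$-adically complete module $M = H^*(U, V_{\lambda,\cO})$. First I would establish that $\eta \mapsto \cT'_\eta$ is a homomorphism of monoids from $\Sigma^+$ (under addition of cocharacters) into $\operatorname{End}_\cO(M)$. Since $A \subseteq Z(L)$, the map $\eta \mapsto \eta(p^{-1})$ is a group homomorphism $X_\bullet(A) \to A(\Qp)$ whose image centralises $L$, and for $\eta \in \Sigma^+$ the element $\eta(p^{-1})$ conjugates $\overline{N}(\Zp)$ into itself. Because $U$ has an Iwahori decomposition with respect to $Q$, the standard computation with such decompositions --- carried out in the finite-slope setting by Urban \cite{urban05}, and formally identical here --- gives $[U\eta(p^{-1})U]\,[U\eta'(p^{-1})U] = [U(\eta+\eta')(p^{-1})U]$ on rational cohomology for all $\eta,\eta' \in \Sigma^+$. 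By \cref{lemma:preservelattice} each $\cT'_\eta = p^{\langle\lambda,\eta\rangle}[U\eta(p^{-1})U]$ already acts on the integral cohomology $M$, and as the normalising scalars are additive, $p^{\langle\lambda,\eta\rangle}p^{\langle\lambda,\eta'\rangle} = p^{\langle\lambda,\eta+\eta'\rangle}$, this upgrades to $\cT'_\eta\cT'_{\eta'} = \cT'_{\eta+\eta'}$ on $M$; in particular the $\cT'_\eta$ commute. I expect this Hecke-algebra identity to be the main obstacle; everything afterwards is soft.

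Next I would fix $\eta_0 \in \Sigma^{++}$, put $T = \cT'_{\eta_0}$, and run the usual argument. As $M$ is finitely generated over the complete DVR $\cO$ it is $p$-adically complete with each $M/p^nM$ finite; the Fitting decomposition of $T$ acting on the finite module $M/p^nM$ exhibits it as the sum of a part on which $T$ is bijective and one on which $T$ is nilpotent, with associated projector $T^{k!}$ for $k \gg 0$. Passing to the inverse limit over $n$, the sequence $(T^{k!})_k$ converges $p$-adically to the idempotent $e_Q' \in \operatorname{End}_\cO(M)$, which commutes with every $\cT'_\eta$ (being a limit of powers of $\cT'_{\eta_0}$), with $T$ invertible on $e_Q'M$ and topologically nilpotent on $(1-e_Q')M$; thus $M = e_Q'M \oplus (1-e_Q')M$ as $\cT'$-stable direct summands. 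If $T$ is invertible on any $T$-stable submodule $M'$, then $T$ has finite order on each finite quotient $M'/p^nM'$, so $T^{k!}|_{M'} = \operatorname{id}$ for $k \gg 0$, hence $e_Q'|_{M'} = \operatorname{id}$ and $M' \subseteq e_Q'M$; so $e_Q'M$ is the largest submodule on which $T$ acts invertibly.

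Finally I would upgrade this to all of $\Sigma^+$ and deduce independence of $\eta_0$. Because $\eta_0$ is strictly dominant, for any $\eta \in \Sigma^+$ we can choose an integer $m \ge \max_\phi\langle\eta,\phi\rangle$ (maximum over the finitely many relative roots $\phi$), so that $m\eta_0 - \eta$ pairs non-negatively with every $\phi$; writing $m\eta_0 = \eta + \eta''$ with $\eta'' \in \Sigma^+$, the monoid relation gives $T^m = \cT'_\eta\cT'_{\eta''}$, which is invertible on $e_Q'M$. Hence $\cT'_\eta$ is surjective, therefore bijective, on the finitely generated $\cO$-module $e_Q'M$. So every $\cT'_\eta$ is invertible on $e_Q'M$, and combined with the maximality from the previous step (applied to $T = \cT'_{\eta_0}$) this identifies $e_Q'M$ as the maximal direct summand on which all the $\cT'_\eta$ are invertible. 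For independence: if $\eta_0' \in \Sigma^{++}$ is a second choice, with idempotent $e$, the same argument shows $\cT'_{\eta_0'}$ is invertible on $e_Q'M$, whence $e_Q'M \subseteq eM$ by maximality of $eM$; by symmetry $e_Q'M = eM$, and two commuting idempotents with the same image coincide, so $e_Q' = e$.
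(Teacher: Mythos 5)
Your proof is correct, and it supplies an argument the paper omits: the proposition is stated in the text without proof, as a standard fact from Hida theory. The paper's nearest analogue is \cref{prop:idem}, where the corresponding idempotent on the distribution-valued cochain complex $\mathcal{C}^{\bullet}(U, D_{\mathrm{univ}})$ is constructed by invoking the profinite framework of \cite[\S 2.1]{pilloni20}; that is formally the same ``stabilise mod $p^n$, pass to the limit'' mechanism you use here via Fitting decompositions on the finite quotients $M/p^nM$, so there is no genuine divergence of method, only of packaging. Note also that the proposition as printed writes $\lim_{k\to\infty}(\cT'_{\eta_0})^{k}$; the limit that exists for a general operator is $\lim_{k\to\infty}(\cT'_{\eta_0})^{k!}$, which is what you correctly use and what the paper itself writes in \cref{prop:idem}, so you have silently repaired a typographical slip.

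Two places I would tighten. First, the monoid identity $[U\eta(p^{-1})U]\,[U\eta'(p^{-1})U]=[U(\eta+\eta')(p^{-1})U]$ should be proved as a coset-combinatorics statement (multiplicity one, granted by the Iwahori decomposition of $U$), and then one observes that the normalisation $p^{\langle\lambda,\eta\rangle}$ is multiplicative on the coefficient sheaf maps; deducing the integral identity from the rational one, as you phrase it, implicitly assumes $H^*(U,V_{\lambda,\cO})\to H^*(U,V_{\lambda})$ is injective, which need not hold. Second, in the last paragraph you move from ``$\cT'_\eta$ is surjective'' to ``$\cT'_\eta$ is bijective'' on the finitely generated $\cO$-module $e_Q'M$; it is worth naming Nakayama's lemma there, since $e_Q'M$ need not be $\cO$-free. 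Both points are cosmetic; the structure of the argument is sound.
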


  \begin{remark}
   We will show in \cref{rmk:indepU} below that $\eQ H^*(U, V_{\lambda, \cO})$ in fact depends only on $U \cap Q(\Zp)$.

   We use the term ``anti-ordinary'' since our Hecke operators are defined using the image of $p^{-1}$ under a dominant cocharacter, rather than $p$. In the familiar case when $G = \GL_2$ and $Q$ is the upper-triangular Borel, this means we do not work with the Hecke operator $U_p$ which acts as $\sum a_n q^n \mapsto \sum a_{np} q^n$ on $q$-expansions, but rather its transpose $U_p'$.
  \end{remark}

  \begin{proposition}\label{prop:comparelattices}
   For any $U$ with an Iwahori decomposition, the natural map
   \[
    \eQ H^i(U, V_{\lambda, \cO}^{\min}) \to \eQ H^i(U, V_{\lambda, \cO}),
   \]
   induced by the inclusion $V_{\lambda, \cO}^{\min} \into V_{\lambda, \cO}$, is an isomorphism.
  \end{proposition}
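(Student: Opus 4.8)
The plan is to compare the two lattices through the short exact sequence they fit into and to show that the anti-ordinary idempotent annihilates the cohomology of the cokernel. Set $M := \cV_{\lambda,\cO}/\cV_{\lambda,\cO}^{\mathrm{min}}$; by the standard facts recalled above this is a finite $\cO$-module. The key structural point is that $M$ is concentrated away from the highest weight space: both $\cV_{\lambda,\cO}$ and $\cV_{\lambda,\cO}^{\mathrm{min}}$ are admissible, so each meets the highest weight space in exactly $\cO\cdot f_\lambda^{\hw}$, and since admissible lattices are the direct sum of their weight-space intersections, $M=\bigoplus_{\mu\neq\lambda}M[\mu]$. It is exactly here that the hypothesis $\lambda\in X_+^\bullet(S)$ enters: it is what forces the highest weight space to coincide with the highest \emph{relative} weight space (in particular to be one-dimensional), so that being supported off the highest weight space is the same as being supported on the locus where the operators of \cref{lemma:preservelattice} acquire a positive power of $p$. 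I would then pass to the long exact sequence in $H^*(U,-)$ attached to $0\to\cV_{\lambda,\cO}^{\mathrm{min}}\to\cV_{\lambda,\cO}\to M\to 0$.

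Because $U$ has an Iwahori decomposition, the normalised operators $\cT'_\eta$ for $\eta\in\Sigma^+$ act on the cohomology of all three terms — the normalising scalar $p^{\langle\lambda,\eta\rangle}$ depends only on $\lambda$, and $p^{\langle\lambda,\eta\rangle}\rho_\lambda(\eta(p)^{-1})$ preserves every admissible lattice by \cref{lemma:preservelattice}, hence descends to $M$ — and these actions are compatible with the maps in the long exact sequence. Consequently, for any fixed $\eta_0\in\Sigma^{++}$ the idempotent $e'_Q=\lim_k(\cT'_{\eta_0})^k$ acts compatibly on the whole sequence; since passing to the image of an idempotent is exact, the resulting sequence of $e'_Q$-parts is again exact. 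It therefore suffices to prove that $e'_QH^j(U,M)=0$ for every $j$.

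The heart of the matter is the claim that $\cT'_{\eta_0}$ carries $H^j(U,M)$ into $p\cdot H^j(U,M)$. By \cref{lemma:preservelattice}, since $\eta_0$ is strictly dominant the operator $p^{\langle\lambda,\eta_0\rangle-1}\rho_\lambda(\eta_0(p)^{-1})$ — which does \emph{not} preserve $\cV_{\lambda,\cO}$ — acts by a non-negative power of $p$ on every weight space $M[\mu]$, $\mu\neq\lambda$, and hence is a well-defined endomorphism of $M$. Thus $p^{\langle\lambda,\eta_0\rangle-1}[U\eta_0(p^{-1})U]$ defines an operator on $H^j(U,M)$ (the coefficient action of $\eta_0(p^{-1})$ enters the double-coset recipe linearly, and any lattice-preserving scalar multiple of $\rho_\lambda(\eta_0(p^{-1}))$ is a legitimate choice of that action), and $\cT'_{\eta_0}=p\cdot\bigl(p^{\langle\lambda,\eta_0\rangle-1}[U\eta_0(p^{-1})U]\bigr)$ on $H^j(U,M)$, which proves the claim. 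Iterating, $(\cT'_{\eta_0})^k$ maps $H^j(U,M)$ into $p^kH^j(U,M)$; as $H^j(U,M)$ is a finitely generated $\cO$-module killed by a fixed power of $p$ (being the cohomology of a finite coefficient module), this vanishes for $k$ large, so $e'_QH^j(U,M)=0$. Feeding this into the long exact sequence of $e'_Q$-parts collapses it to isomorphisms $e'_QH^i(U,\cV_{\lambda,\cO}^{\mathrm{min}})\xrightarrow{\sim}e'_QH^i(U,\cV_{\lambda,\cO})$, which are the maps in the statement.

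I expect the divisibility claim in the third paragraph to be the only step needing genuine care; the rest is formal. That claim rests entirely on $M$ being supported away from the highest weight space — and hence on $\lambda\in X_+^\bullet(S)$: without this assumption the cokernel could meet the highest relative weight space, on which $\cT'_{\eta_0}$ need not be topologically nilpotent, and the statement would fail.
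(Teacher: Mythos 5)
Your proof is correct and follows essentially the same path as the paper's: decompose into relative weight spaces, use that the inclusion is an isomorphism on the highest relative weight space, and use \cref{lemma:preservelattice} to show the anti-ordinary idempotent kills $H^*(U,M)$ for the quotient $M$, whence the long exact sequence collapses. The only cosmetic difference is that the paper directly notes a sufficiently high power of the coefficient operator annihilates $M$ outright (so $(\cT'_{\eta_0})^r$ kills $H^j(U,M)$), whereas you factor $\cT'_{\eta_0} = p\cdot(\text{integral operator on }M)$ and iterate against the finite $p$-exponent of $H^j(U,M)$ — two equivalent phrasings of the same mechanism.
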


  \begin{proof}
   Both $V_{\lambda, \cO}^{\min}$ and $V_{\lambda, \cO}$ are direct sums of their weight spaces for $A$. The inclusion map is an isomorphism on the highest relative weight space (by definition). On each lower weight space, $p^{\langle \lambda, \eta_0\rangle} \rho_\lambda(\eta_0(p)^{-1})$ acts as a positive power of $p$, so some power of this map will annihilate the quotient of the weight spaces in the two lattices. Thus, if $r \gg 0$ (depending on $\lambda$), the operator $(\cT'_{\eta_0})^r$ annihilates the cohomology of the quotient.
  \end{proof}

 \subsection{Iwasawa cohomology and control theorems}

  \begin{definition}
   \label{def:iwacoh}
   For $i \ge 0$, and $\lambda \in X^\bullet_+(S)$, we set
   \[ H^i_{\Iw}(Q^0, V_{\lambda, \cO}) = \varprojlim_U H^i(U, V_{\lambda, \cO}), \]
   where $U$ varies over open compact subgroups of $G(\Zp)$ containing $Q^0(\Zp)$, and the inverse limit is taken relative to the pushforward maps.
  \end{definition}

  One checks that the levels $U$ having an Iwahori decomposition are cofinal in the above inverse system, and the operators $\{ \cT'_\eta : \eta \in \Sigma^+\}$ are compatible with the pushforward maps and hence act on the inverse limit.

  In this section and the next, we prove the following result, generalising results due to Ohta \cite{ohta99, ohta00} for Betti or \'etale cohomology of modular curves to arbitrary reductive groups:

  \begin{theorem} \
  \label{thm:rockwood}
   \begin{enumerate}[(i)]
    \item The modules $\eQ H^i_{\Iw}(Q^0, \cO)$ are finitely-generated over $\cO[[\fS]]$, and vanish for $i \gg 0$.

    % \item If $\lambda \in X^\bullet_+(S / S_0)$, then tensor product with the highest weight vector $f_\lambda^{\hw}$ defines a ``moment map''
    % \[ \mom^\lambda: H^i_{\Iw}(Q^0, \cO) \to H^i_{\Iw}(Q^0, V_{\lambda, \cO}) \]
    % whose kernel and cokernel are annihilated by $\eQ$. This map commutes with the operators $\cT'_\eta$, and commutes with the action of $\fS$ up to a twist by $\lambda$.

    \item Let $U$ be any open compact subgroup of $G(\Zp)$ such that $U \supseteq Q^0(\Zp)$ and $U$ admits an Iwahori decomposition. For any $\lambda \in X^\bullet_+(S / S_0)$, there is a convergent spectral sequence (supported in the second quadrant $i \le 0, j \ge 0$):
    \[ E_2^{ij} = \operatorname{Tor}_{-i}^{\cO[[\fS_U]]}\Big( \eQ H^j_{\Iw}(Q^0, \cO), \cO[-\lambda] \Big) \Rightarrow \eQ H^{i+j}(U, V_{\lambda, \cO}),\]
    where $\fS_U$ is the image of $U \cap Q(\Zp)$ in $\fS$, and $\cO[-\lambda]$ denotes $\cO$ with $\fS_U$ acting via the algebraic character $-\lambda$. This spectral sequence is compatible with the action of Hecke operators away from $p$, and with the operators $\{ \cT'_\eta: \eta \in \Sigma^+\}$.
   \end{enumerate}
  \end{theorem}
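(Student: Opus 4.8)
The key technical input is a chain-level model for $H^*_{\Iw}(Q^0, \cO)$ as a complex of $\cO[[\mathfrak{S}]]$-modules. I would first note that the levels $U$ with an Iwahori decomposition containing $Q^0(\Zp)$ are cofinal, so we may restrict attention to these. For such $U$, the distribution module $\cO[[\mathfrak{S}_U]]$ (resp.\ $\cO[[\mathfrak{S}]]$) acts on $H^*(U, \cO)$ via the big-cell/mom construction, and the whole tower $\{Y_G(K^pU)\}_U$ is pro-étale (or pro-finite-covering, in the Betti case) with deck group a quotient of $\mathfrak{S}$. Hence $R\varprojlim_U R\Gamma(U, \cO)$ is computed by a bounded complex $C^\bullet$ of finitely-generated $\cO[[\mathfrak{S}]]$-modules, and applying the (exact, since $e'_Q$ is an idempotent) functor $e'_Q$ gives a bounded complex $e'_Q C^\bullet$ whose cohomology is $e'_Q H^*_{\Iw}(Q^0, \cO)$.

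For part (i), once we have such a complex, finite generation and vanishing in high degrees would follow if each $e'_Q C^i$ is finitely generated over $\cO[[\mathfrak{S}]]$ and the complex is bounded. The boundedness comes from the uniform cohomological dimension bound stated in \cref{sect:setup} (cohomology vanishes above a bound independent of $U$). For finite generation I would argue by a topological Nakayama / completed-tensor argument: reducing mod the maximal ideal of $\cO[[\mathfrak{S}]]$ recovers (up to the idempotent and a Tor spectral sequence) the cohomology $e'_Q H^*(\bar J, \cO)$ at a fixed finite level, which is finitely generated over $\cO$ by the hypotheses collected in the ``cohomology of symmetric spaces'' subsection; then topological Nakayama for the Noetherian local ring $\cO[[\mathfrak{S}]]$ lifts this to finite generation of the Iwasawa cohomology. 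This is where \cref{prop:comparelattices} enters: it lets us pass freely between $\cV^{\min}$ and $\cV^{\max}$ so that the relevant finite-level cohomology is the honest integral étale/Betti cohomology.

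For part (ii), the spectral sequence is the base-change (hyper-Tor) spectral sequence for the bounded complex $e'_Q C^\bullet$ of $\cO[[\mathfrak{S}]]$-modules against the augmentation-type module $\cO[-\lambda]$, after first restricting scalars from $\cO[[\mathfrak{S}]]$ to $\cO[[\mathfrak{S}_U]]$ (legitimate because $\mathfrak{S}_U$ is of finite index, or more precisely $\mathfrak{S}$ is the filtered union of the $\mathfrak{S}_U$, so $\cO[[\mathfrak{S}]]$ is flat over $\cO[[\mathfrak{S}_U]]$ and $e'_Q C^\bullet \otimes^{\mathbf{L}}_{\cO[[\mathfrak{S}]]} \cO[[\mathfrak{S}_U\text{-part}]]$ recomputes the $U$-level complex). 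The content is the identification of the abutment: $e'_Q C^\bullet \otimes^{\mathbf{L}}_{\cO[[\mathfrak{S}_U]]} \cO[-\lambda] \simeq e'_Q R\Gamma(U, V_{\lambda, \cO})$. Here one writes $V_{\lambda,\cO}$ via Borel--Weil as functions on $G_{/\cO}$, identifies its restriction along $\bar N_U L_U N_U$ with an induced module, and uses the Iwahori decomposition together with \cref{lemma:preservelattice} to see that on the anti-ordinary part the coefficient sheaf $V_{\lambda,\cO}$ is quasi-isomorphic (after the idempotent) to the constant sheaf $\cO$ twisted by $\lambda$ along $\mathfrak{S}_U$ — this is essentially the ``highest-weight vector dominates'' phenomenon already used in the proof of \cref{prop:comparelattices}. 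Hecke-equivariance away from $p$ is automatic since those operators act on the whole tower; compatibility with $\{\cT'_\tau\}$ follows because these are defined at chain level on $C^\bullet$ and commute with the base-change maps by construction of the normalisation in \cref{def:integralHecke}.

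The main obstacle, I expect, is the last identification in part (ii): making the comparison ``$e'_Q(\text{sheaf for }V_{\lambda,\cO}) \simeq e'_Q(\cO[-\lambda])$'' precise and functorial at the level of complexes, uniformly in $U$, and checking that the resulting Tor spectral sequence is the claimed one with the correct $\mathfrak{S}_U$-action. Concretely one must control the filtration of $V_{\lambda,\cO}$ by weight spaces under $N_U$ and $\bar N_U$, show the non-highest-weight contributions are killed by a power of $\cT'_{\eta_0}$ after passing to the limit (uniform bound in $U$ — this needs the Iwahori decomposition so that $\eta_0(p)$ genuinely contracts $N_U$), and match the surviving highest-weight line with $\cO[-\lambda]$ as an $\cO[[\mathfrak{S}_U]]$-module via the character $\lambda$. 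Everything else is a formal manipulation of bounded complexes of finitely-generated modules over the Noetherian ring $\cO[[\mathfrak{S}]]$.
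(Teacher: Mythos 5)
Your proposal is close in spirit to the paper's proof, but there is a genuine gap at the very first step, and it propagates. You assert that ``$R\varprojlim_U R\Gamma(U, \cO)$ is computed by a bounded complex $C^\bullet$ of finitely-generated $\cO[[\mathfrak{S}]]$-modules.'' This is not obviously constructible: the Borel--Serre complexes at different levels $U$ are not canonically compatible, and the terms of $\varprojlim_U$ of such complexes have no reason to be finitely generated over $\cO[[\mathfrak{S}]]$. The paper sidesteps this entirely by working at a \emph{single fixed level} $V_0' = \overline{N}_1 L_0 N_0$ with \emph{large coefficients}: one takes $D_{\mathrm{univ}} = \mathrm{Hom}_{\mathrm{cont}}(C_{\mathrm{univ}}, \cO)$, the module of $\cO$-valued measures on $Q^0(\Zp)\backslash V_0'$, and shows $\mathcal{C}^\bullet(V_0', D_{\mathrm{univ}})$ has terms that are \emph{flat} — indeed power series rings over $R = \cO[[\mathfrak{S}]]$ — but emphatically \emph{not} finitely generated. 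Finite generation of the cohomology is then a theorem, not an assumption: after applying $e'_Q$ one checks $H^*(e'_Q\mathcal{C}^\bullet(V_0', D_{\mathrm{univ}}) \otimes_R R/J(R))$ is finite, and a Pilloni-style argument (\cite[Prop.~2.2.1]{pilloni20}) upgrades this to representability by a perfect complex $M^\bullet$. Your ``topological Nakayama'' plan for part (i) is pointing at the right phenomenon but cannot be made to work without this perfect-complex scaffolding, since naive Nakayama applied to each $H^j_{\Iw}$ separately does not control the Tor spectral sequence boundary terms.

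For part (ii), the same gap bites harder. Without flat terms, the derived tensor $e'_Q C^\bullet \otimes^{\mathbf{L}}_{\cO[[\mathfrak{S}_U]]}\cO[-\lambda]$ is not computed term-by-term, and you have no handle on the abutment. The paper's identification proceeds in two steps you are missing. First, flatness of $D_{\mathrm{univ}}$ gives the chain-level equality $e'_Q\mathcal{C}^\bullet(V_0', D_{\mathrm{univ}}) \otimes_{\cO[[\mathfrak{S}_U]]} \cO[-\lambda] = e'_Q\mathcal{C}^\bullet(V_0', D_{\lambda, U})$ where $D_{\lambda, U} = D_{\mathrm{univ}} \otimes_R \cO[-\lambda]$. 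Second, the key cohomological computation is $e'_Q H^i(V_0', D_{\lambda, U}) \cong e'_Q H^i(U, V_\lambda)$, which requires: (a) the Hida-style commutativity lemma relating $\mathrm{cores}$, $[\tau^s]_*$, and $\cT'_\eta$, giving $e H^i(U^{(s)}, M) \cong e H^i(U, M)$; (b) Shapiro's lemma to absorb the finite quotient $\mathfrak{S}_U\backslash\mathfrak{S}$; and (c) the fact that the ordinary idempotent contracts $V_\lambda$ onto its highest-weight line (which you correctly flag as related to \cref{prop:comparelattices}). You identify (c) and sketch (a) in outline, but without the measure-module framework there is nothing for these ingredients to attach to, and your own closing paragraph correctly identifies this as the obstacle without resolving it. A minor point: your phrase ``$\mathfrak{S}$ is the filtered union of the $\mathfrak{S}_U$'' is confused — for a fixed $U$, $\mathfrak{S}_U$ is a finite-index subgroup, and $\cO[[\mathfrak{S}]]$ is a finite free $\cO[[\mathfrak{S}_U]]$-module, which is all that is needed.
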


  We shall give the proof below assuming that $H^*(-)$ denotes Betti cohomology. The remaining cases follow readily from this, as follows. One may define all the spaces and maps for a finite truncation of the coefficients, using general functoriality properties of \'etale cohomology. Then the remaining assertions -- that the inverse limits are finitely-generated, and that the moment map in (ii) is an isomorphism on the ordinary part -- can be checked using Artin's comparison theorem to relate Betti cohomology and \'etale cohomology over $\QQbar$, and the Hochschild--Serre spectral sequence in order to descend to the cohomology of the integral model over $\cO_E[1/S]$.

  \begin{remark}\label{rmk:coset}
   In the above formulation, we have assumed that $\lambda$ factors through the quotient $S / S_0$, and in particular is trivial on $L^{\mathrm{der}}$. More generally, we can fix any $\nu \in X^\bullet_+(T)$ (not necessarily trivial on $L^{\mathrm{der}}$) and study the cohomology groups $\eQ \cdot \varprojlim_U H^i(U, V_{\nu, \cO})$; one can show that these are finite-type $\cO[[\fS]]$-modules which interpolate the cohomology groups $\eQ \cdot H^i(U, V_{\lambda + \nu, \cO})$ as $\lambda$ varies over characters of $S / S_0$ (so $\lambda + \nu$ varies over a coset of $X^\bullet(S / S_0)$ in $X^\bullet(T)$). We shall stick to the case $\nu = 0$ here for ease of notation, but the proof in the general case is very similar.
  \end{remark}

 \subsection{Proof of the control theorem}

  For brevity we set $R \coloneqq \cO[[\fS]]$. Let us choose an element $\eta \in \Sigma^{++}$ and set $\tau = \eta(p)$.

  \subsubsection*{Groups} Let $Q^0(\Zp) \subset U \subset G(\Zp)$ be an open compact subgroup admitting an Iwahori decomposition
  \[
   U = \bar{N}_U \times L_U \times N_0,
  \]
  and for $r \ge 0$ set
  \[
   U^{(r)} = \tau^{-r}U\tau^r \cap U.
  \]
  Note that we have $U^{(r)} = \bar{N}_U^{(r)} \times L_U \times N_0$ for all $r$, where $\bar{N}_U^{(r)} = \tau^{-r} \bar{N}_U \tau^r$; in particular $\bigcap_{r \ge 0} U^{(r)} = U \cap Q(\Zp)$.

  \subsubsection*{Betti cohomology complexes}

   For an open compact subgroup $K = K^pU \subset G(\Af)$ the symmetric space $Y(K)$ is given by a finite disjoint union
   \[
     Y(K) = \bigsqcup_i \mathcal{X}/\Gamma_i.
   \]
   where $\Gamma_i \subset G(\QQ)^+$ are arithmetic subgroups, and $\mathcal{X} = G(\RR)^+/\left(K_{\infty}Z(\RR) \cap G^+(\RR)\right)$ with $K_{\infty}$ a maximal compact subgroup of $G(\RR)$. If $K$ is neat then $\mathcal{X}/\Gamma_i$ has the structure of a smooth manifold. This space satisfies $\pi_1\left(\mathcal{X}/\Gamma_i\right) = \Gamma_i$ and thus
   \[
     H^j(K, V_{\lambda}) = \bigoplus_iH^j(\Gamma_i, V_{\lambda}).
   \]
   Let $M$ be a $\ZZ[K]$-module with $K$ acting through its projection to $U$. By the theory of Borel--Serre (\cite{borelserre73}; see also \cite[\S 4.2.1]{urban05}), the cohomology groups $H^i(\Gamma_i, M)$ vanish for $i \gg 0$, and are computed by a complex $\mathcal{C}^{\bullet}(\Gamma_i, M)$ of $\ZZ[\Gamma_i]$-modules, depending functorially on $M$, each term of which is isomorphic to a finite direct sum of copies of $M$. We write
   \[
      \mathcal{C}^{\bullet}(U, M) := \bigoplus_i \mathcal{C}^{\bullet}(\Gamma_i, M).
   \]
   This complex computes the Betti cohomology of $Y(K)$. If $M$ has an action of $\tau^{-1}$ compatible with the $K$-action then we can define an action of $\cT'_{\eta}$ on $ \mathcal{C}^{\bullet}(K, M)$ lifting the action on cohomology. There are many such liftings but these differ by a chain homotopy and thus induce the same morphism in the derived category.

  \subsubsection*{Modules of continuous functions}

   For $\lambda \in X_+^{\bullet}(S / S_0)$ we define $U$-modules
   \begin{align*}
    C_{\lambda, U} &= \{f: U \to \cO : f \ \text{continuous}, \ f(n \ell g) = \lambda^{-1}(\ell)f(g)  \ \forall n \in N_0, \ell \in L_U, g \in U\} \\
    C_{\univ} &= \{f: U \to \cO: f \ \text{continuous}, \ f(n \ell g) = f(g) \ \forall n \ell \in Q^0(\Zp), g \in U\}
   \end{align*}
   with $U$ acting by right translation.

   Note that, since we have $U = N_0 \times L_U \times \bar{N}_U$, restriction to $\bar{N}_U$ is an isomorphism between $C_{\lambda, U}$ and the space of continuous functions on $\bar{N}_U$, for any $\lambda$ (but the action of $U$ on this space depends on $\lambda$). Similarly, functions in $C_{\univ}$ are uniquely determined by their restriction to $L_U \times \bar{N}_U$, and this restriction must transform trivially under left-translation by $L_U \cap L^0$, giving an isomorphism between $C_{\univ}$ and continuous functions on $\fS_U \times \bar{N}_U$.

   We endow these spaces with an action of $\tau$ given by
   \[
    \left(\tau\cdot f\right)(n \ell \bar{n}) = f(n\ell \tau^{-1}\bar{n}\tau),
   \]
   and with the uniform-convergence topology. There are natural inclusions
   \[
    V_{\lambda^{\vee}, \cO}^{\mathrm{max}} \to C_{\lambda, U} \to C_{\univ}
   \]
   intertwining the $U$ and $\tau$ actions, where $\lambda^{\vee} = -w_0\lambda$ and $w_0$ is the long Weyl element.

\subsubsection*{Modules of measures} We define modules of bounded distributions
\begin{align*}
    D_{\lambda, U} &= \mathrm{Hom}_{\mathrm{cont}}\left(C_{\lambda, U}, \cO \right) \\
    D_{\univ} &= \mathrm{Hom}_{\mathrm{cont}}\left(C_{\univ}, \cO \right),
\end{align*}
which inherit actions of $U$ and $\tau^{-1}$ by duality.
By dualising the above sequence, these spaces admit the following maps
$$
    D_{\univ} \to D_{\lambda, U} \to V_{\lambda, \cO}^{\min},
$$
which are equivariant for the actions of $U$ and $\tau^{-1}$.
Writing $\fS_U$ for the image of $U$ in $\fS$, these groups are naturally modules over $R = \cO[[\fS_{U}]]$ and this structure is given explicitly by the isomorphisms:
\begin{align*}
    D_{\lambda, U} &\cong (\cO[-\lambda] \otimes_{\cO} R)\hat{\otimes}_{\cO}\cO[[ \bar{N}_U]], \\
    D_{\univ} &\cong R \hat{\otimes}_{\cO} \cO[[\bar{N}_{U}]],
\end{align*}
where $\cO[-\lambda]$ denotes $\cO$ regarded as an $\fS$ (and accordingly $\cO[[\fS]]$) module via the inverse of $\lambda$.
In particular
\[ D_{\univ} \otimes_R \cO[-\lambda] \cong D_{\lambda, U} \]
as $R$-modules. (In this case the completed tensor product and the abstract tensor product agree, since both modules are complete and $\cO[-\lambda]$ is finitely-generated.)

\begin{lemma}
        The $R$-module $D_{\univ}$ is flat.
    \end{lemma}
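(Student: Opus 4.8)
The strategy is to exploit the presentation $D_{\mathrm{univ}}\cong R\,\hat{\otimes}_{\cO}\cO[[\bar N_0]]$ recorded above, in which $R$ acts only through the first tensor factor, to identify this module with a (possibly infinite) product of copies of $R$, and then to conclude from the Noetherianity of $R$. To see that $R$ is Noetherian: since $\mathfrak S$ is a closed subgroup of the compact abelian $p$-adic Lie group $(S/S^0)(\Zp)$ it is topologically finitely generated, and being $p$-torsion-free it has the form $\mathfrak S\cong\Zp^{a}\times\Delta$ with $\Delta$ a finite abelian group of order prime to $p$; hence $R=\cO[[\mathfrak S]]\cong\cO[\Delta][[t_1,\dots,t_a]]$ is a power series ring in finitely many variables over the (Noetherian) module-finite $\cO$-algebra $\cO[\Delta]$, so is Noetherian. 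Equivalently, $R$ is the Iwasawa algebra of a compact $p$-adic analytic group, so Lazard's theorem applies.

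Next I would analyse $\cO[[\bar N_0]]$ purely as a topological $\cO$-module. Write $\cO[[\bar N_0]]=\varprojlim_m A_m$ with $A_m=\cO[\bar N_0/H_m]$, where $H_1\supseteq H_2\supseteq\cdots$ is a cofinal descending chain of open normal subgroups of $\bar N_0$ with trivial intersection. Each $A_m$ is a finite free $\cO$-module, each transition map $A_{m+1}\onto A_m$ is a surjection onto a free module and hence splits, and its kernel $B_{m+1}$ — a finitely generated submodule of a free module over the discrete valuation ring $\cO$ — is again finite free. Choosing the splittings coherently identifies $A_m\cong B_1\oplus\cdots\oplus B_m$ with the transition maps the coordinate projections, so $\cO[[\bar N_0]]\cong\prod_{m}B_m$. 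Since each $A_m$ is finitely generated, $R\,\hat{\otimes}_\cO A_m=R\otimes_\cO A_m$, and therefore
\[ D_{\mathrm{univ}}\;\cong\;R\,\hat{\otimes}_\cO\cO[[\bar N_0]]\;\cong\;\varprojlim_m\,(R\otimes_\cO A_m)\;\cong\;\prod_{m}\,(R\otimes_\cO B_m)\;\cong\;R^{\,I} \]
as $R$-modules, where $I$ is the disjoint union of chosen $\cO$-bases of the $B_m$.

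It remains to observe that an arbitrary product $R^{I}$ is flat over the Noetherian ring $R$. This is Chase's theorem that products of flat modules over a coherent ring are flat; it is also immediate by hand, since to verify flatness it suffices to check $\operatorname{Tor}_1^{R}(R/\fa,R^{I})=0$ for $\fa$ a finitely generated ideal, and such an $\fa$, being finitely presented, satisfies $\fa\otimes_R R^{I}\cong\fa^{I}$, which manifestly injects into $R^{I}$. I do not anticipate a genuine obstacle: the argument is a formal manipulation of completed tensor products together with the elementary module theory of the discrete valuation ring $\cO$. The one point that requires a little care is the identification $R\,\hat{\otimes}_\cO\cO[[\bar N_0]]\cong R^{I}$ — in particular that one genuinely encounters the product $R^{I}$, which for infinite $I$ is strictly larger than the ordinary base change $R\otimes_\cO\cO^{I}$, so that the proof cannot be reduced to flat base change from $\cO$.
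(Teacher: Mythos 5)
Your proof is correct and follows the same basic route as the paper: both start from the identification $D_{\mathrm{univ}}\cong R\,\hat\otimes_{\cO}\cO[[\bar N_0]]$ and then show this is a ``big free-like'' $R$-module, concluding flatness from the Noetherianity of $R$. The paper takes a small shortcut, identifying $\cO[[\bar N_0]]$ with a formal power series ring $\cO[[t_1,\dots,t_d]]$ over $\cO$ (using that $\bar N_0$ is a compact $p$-adic manifold homeomorphic to $\Zp^d$), so that $D_{\mathrm{univ}}\cong R[[t_1,\dots,t_d]]$, and then quoting the standard flatness of power series rings over a Noetherian base. You instead bypass the $p$-adic Lie structure of $\bar N_0$: you decompose $\cO[[\bar N_0]]$ directly as a product of finite free $\cO$-modules via compatible splittings of the finite quotients $A_m$, obtain $D_{\mathrm{univ}}\cong R^I$, and invoke Chase's theorem. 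These are two wrappings of the same underlying fact, since $R[[t]]\cong R^{\mathbf{N}}$ as $R$-modules and the cleanest proof that this product is flat is precisely Chase; but your version is a touch more elementary in that it works for $\cO[[\bar N_0]]$ with $\bar N_0$ any profinite group (or even profinite set), without needing the Lazard-type coordinatisation, and you are explicit about the two points the paper leaves implicit — the commutation of $\hat\otimes$ with the limit over $m$ (valid because each $A_m$ is finite free so $R\otimes_{\cO}A_m$ is already complete) and the role of $R$ being Noetherian (hence coherent) in applying Chase.
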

    \begin{proof}
    We have shown that $D_{\univ}$ is isomorphic as an $R$-module to a power series ring in finitely many variables over $R$, which is in particular flat
    \end{proof}

     \begin{proposition} \label{prop:idem}
     There is an idempotent $\eQ$ acting on $\mathcal{C}^{\bullet}(U, D_{\univ})$ such that
     \begin{itemize}
         \item For all $i$ and all $f \in \mathcal{C}^i(U, D_{\univ})$ we have $\eQ f = \lim_{n \to \infty}\left(\cT'_{\eta}\right)^{n!}f$.
         \item There is a decomposition
         $$
            \mathcal{C}^{\bullet}(U, D_{\univ}) = \eQ\mathcal{C}^{\bullet}(U, D_{\univ}) \oplus \left( 1 - \eQ\right)\mathcal{C}^{\bullet}(U, D_{\univ})
         $$
         such that $\cT'_{\eta}$ acts invertibly on the first component and is topologically nilpotent on the second.
     \end{itemize}
     Moreover, the terms of the complex $\eQ\mathcal{C}(U, D_{\univ})$ are flat $R$-modules.
 \end{proposition}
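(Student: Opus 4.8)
The plan is to build $e'_Q$ termwise as the stable value of the sequence $(\mathcal{T}'_\eta)^{n!}$, exploiting the pro-structure of the coefficient module. By the Borel--Serre description recalled above, each $\mathcal{C}^i := \mathcal{C}^i(U, D_{\mathrm{univ}})$ is a finite direct sum of copies of $D_{\mathrm{univ}}$, and by the preceding lemma $D_{\mathrm{univ}} \cong R \hat{\otimes}_\cO \cO[[\bar N_0]]$ is a power series ring in finitely many variables over $R$. Two consequences: first, $\mathcal{C}^i$ is $R$-flat, being a finite direct sum of copies of the flat module $D_{\mathrm{univ}}$; second, writing $\cO[[\bar N_0]] = \varprojlim_m \cO[\bar N_0/\bar N_m]$ (a cofinal system of finite free quotients, since $\bar N_m \searrow \{1\}$ as $\eta \in \Sigma^{++}$) and $R = \varprojlim_k R/I^k$ for $I$ the Jacobson radical of $R$ (so each $R/I^k$ is finite), the module $\mathcal{C}^i$ is the inverse limit of the \emph{finite} abelian groups $\mathcal{C}^i_{(k,m)}$ obtained by reducing the $R$-coefficients modulo $I^k$ and truncating the $\bar N_0$-variable to level $m$. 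Finally, $\mathcal{T}'_\eta$ is $R$-linear and continuous, and, being one of the chain-level lifts of the Hecke operator on cohomology, commutes with the differentials of $\mathcal{C}^\bullet$.

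The crux of the argument is the following compatibility: \emph{$\mathcal{T}'_\eta$ preserves the defining filtration of $\mathcal{C}^i$}, i.e. it descends to an $R$-linear endomorphism $\bar{\mathcal{T}}'_\eta$ of each finite quotient $\mathcal{C}^i_{(k,m)}$. To prove this one unwinds the definition of $\mathcal{T}'_\eta$: since $U$ has an Iwahori decomposition, one writes the double coset as a finite disjoint union $U\tau^{-1}U = \bigsqcup_j u_j \tau^{-1} U$, and $\mathcal{T}'_\eta$ on $\mathcal{C}^\bullet(U, D_{\mathrm{univ}})$ then becomes a finite sum of composites of transfer maps between arithmetic subgroups with the $R$-linear actions of the $u_j$ and of $\tau^{-1}$ on $D_{\mathrm{univ}}$. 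One checks, from the explicit formula $(\tau\cdot f)(n\ell\bar n) = f(n\ell\tau^{-1}\bar n\tau)$ and the right-translation action of the $u_j \in \bar N_0$, using that conjugation by $\tau^{-1}$ contracts $\bar N$ (as $\eta \in \Sigma^{++}$), that $L(\Zp)$ commutes with $\tau$ and normalizes each $\bar N_m$, and that $L^0(\Zp)N_0$ acts through its projection to the relevant finite level, that each of these constituent maps respects the level filtration; the $I$-adic direction is automatic from $R$-linearity. This is the analogue, for a general parabolic $Q$, of Ohta's computation of the $U_p$-operator on modular curves, and is where essentially all the work lies.

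Granting this, on each finite $R$-module $C := \mathcal{C}^i_{(k,m)}$ the induced endomorphism $\bar{\mathcal{T}}'_\eta$ lies in a finite monoid, so its powers are eventually periodic and $(\bar{\mathcal{T}}'_\eta)^{n!}$ is eventually constant, with limit an idempotent $e_C$; by Fitting's lemma $C = e_C C \oplus (1-e_C)C$ with $\bar{\mathcal{T}}'_\eta$ invertible on $e_C C$ and nilpotent on $(1-e_C)C$. The $e_C$ are compatible as $(k,m)$ vary and commute with the induced differentials, so passing to the inverse limit gives an idempotent chain endomorphism $e'_Q = \varprojlim e_C = \lim_{n\to\infty}(\mathcal{T}'_\eta)^{n!}$ of $\mathcal{C}^\bullet(U, D_{\mathrm{univ}})$, with $e'_Q f = \lim_n (\mathcal{T}'_\eta)^{n!} f$ for every cochain $f$. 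Hence $\mathcal{C}^\bullet(U, D_{\mathrm{univ}}) = e'_Q\mathcal{C}^\bullet(U, D_{\mathrm{univ}}) \oplus (1-e'_Q)\mathcal{C}^\bullet(U, D_{\mathrm{univ}})$ as complexes of $R$-modules, and taking inverse limits of the statements over the $C$ shows $\mathcal{T}'_\eta$ is invertible on the first summand (a limit of bijections with compatible inverses) and topologically nilpotent on the second.

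For the ``moreover'', each term $e'_Q\mathcal{C}^i(U, D_{\mathrm{univ}})$ is, as an $R$-module, a direct summand of $\mathcal{C}^i(U, D_{\mathrm{univ}})$, which is $R$-flat by the first paragraph; a direct summand of a flat module is flat, so $e'_Q\mathcal{C}^\bullet(U, D_{\mathrm{univ}})$ consists of flat $R$-modules. The one substantive step in all of this is the filtration-compatibility of $\mathcal{T}'_\eta$ asserted in the second paragraph — the careful bookkeeping with the Iwahori coset decomposition and the $V_0'$- and $\tau^{-1}$-actions on $D_{\mathrm{univ}}$; everything else is formal.
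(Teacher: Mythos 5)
Your argument follows essentially the same route as the paper: exhibit $\mathcal{C}^\bullet(U,D_{\mathrm{univ}})$ as an inverse limit of complexes of finite abelian groups, check that $\mathcal{T}'_\eta$ descends to each finite quotient (the paper uses the single filtration $W_r$ by the kernels $D_{\mathrm{univ}} \to (\cO/p^r)[[Q(\ZZ/p^r)]]$, you use the equivalent bi-filtration by $I^k$ and the $\bar N$-level), construct the idempotent on finite quotients and pass to the limit, and deduce flatness from the direct-summand-of-flat argument. The only cosmetic difference is that you carry out the eventual-stability/Fitting argument on finite modules directly, whereas the paper delegates this to Pilloni's formalism in \cite[\S 2.1]{pilloni20}; your extra words on why $\mathcal{T}'_\eta$ preserves the filtration (contraction of $\bar N_0$ under $\tau^{-1}$-conjugation and $R$-linearity) are the content the paper asserts without elaboration.
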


 \begin{proof}
  Note that $D_{\univ}$ is a profinite $R$-module, isomorphic to a power series ring in finitely many variables over $R$; and we can write it as an inverse limit of finite quotients, $D_\univ = \varprojlim_r D_{\univ} / W_r$, where each $W_r$ is stable under the action of $U$ and of $\tau^{-1}$. So $\mathcal{C}^{\bullet}(U, D_{\univ})$ is a complex of profinite $R$-modules, and the $W_r$ induce subcomplexes $W^{\bullet}_r$ whose components are bases of of open neighbourhoods for the components of $\mathcal{C}^{\bullet}(U, D_{\univ})$. The action of $\cT'_{\eta}$ on $\mathcal{C}^{\bullet}(U, D_{\univ})$ preserves $W^{\bullet}_r$ and thus descends to an action on $\mathcal{C}^{\bullet}(U, D_{\univ})/W_r^{\bullet}$ for every $r$. This complex consists of modules of finite cardinality. We can thus use the ideas developed in \cite[Section 2.1]{pilloni20} to deduce the existence of $\eQ$.

  Since each component of $\mathcal{C}^{\bullet}(U, D_{\univ})$ is a direct sum of copies of $D_{\univ}$ (by the Borel--Serre theory mentioned above), this is a complex of flat $R$-modules by the previous lemma. Since $\eQ\mathcal{C}^{\bullet}(U, D_{\univ})$ is a direct summand of $\mathcal{C}^{\bullet}(U, D_{\univ})$ we conclude that it is also a complex of flat $R$-modules.
 \end{proof}

 \begin{remark}
  Note that the idempotent $e_Q'$ is independent of the choice of $\eta$, since the multiples of any given $\eta$ are cofinal in $\Sigma^{++}$.
 \end{remark}

The key input into the proof of the control theorem is the following lemma.

\begin{lemma} \label{lem:hida}
Let $M$ be a  $\ZZ[U]$-module with a compatible action of $\tau^{-1}$. The following diagram commutes on cohomology
$$
\begin{tikzcd}
\mathcal{C}^{\bullet}(U^{(s)}, M) \arrow[d, "(\cT'_{\eta})^s"] \arrow[r, "\mathrm{cores}"] & \mathcal{C}^{\bullet}(U, M) \arrow[dl, "{[\tau^s]}_*"] \arrow[d, "(\cT'_{\eta})^s"] \\
\mathcal{C}^{\bullet}(U^{(s)}, M) \arrow[r, "\mathrm{cores}"'] & \mathcal{C}^{\bullet}(U, M)
\end{tikzcd}
$$
\end{lemma}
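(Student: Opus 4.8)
The plan is to deduce the diagram from its two triangular faces, the outer square then being formal. Writing $c=\mathrm{cores}_{U^{(s)}}^{U}$ and $t=[\tau^{s}]_{*}$, the triangles assert $\mathcal{T}_{\eta}^{s}=c\circ t$ on $\mathcal{C}^{\bullet}(U,M)$ and $\mathcal{T}_{\eta}^{s}=t\circ c$ on $\mathcal{C}^{\bullet}(U^{(s)},M)$, whence $c\circ\mathcal{T}_{\eta}^{s}=c\circ t\circ c=\mathcal{T}_{\eta}^{s}\circ c$. Since only the equalities on cohomology are required, I would argue throughout with cohomological correspondences of the symmetric spaces $Y(K^{p}V)$, built from the finite covers $Y(K^{p}V)\to Y(K^{p}V')$ for $V\subseteq V'$ and the translation isomorphisms $Y(K^{p}V)\xrightarrow{\cdot\,\tau^{-s}}Y(K^{p}\,\tau^{s}V\tau^{-s})$; at this level each of $\mathrm{cores}$, $[\tau^{s}]_{*}$ and $\mathcal{T}_{\eta}^{s}$ is canonically defined, so the chain-homotopy indeterminacy flagged after the construction of $\mathcal{C}^{\bullet}(K,M)$ does not intervene. (The hypothesis that $M$ has a compatible action of $\tau^{-1}$ is exactly what makes the translation maps act on the coefficient systems.)

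The one substantive ingredient is the \emph{optimality} of the $p$-adic Hecke operators: since $\eta\in\Sigma^{++}$ is strictly $Q$-dominant, $\tau=\eta(p)$ strictly contracts $N$ and expands $\bar N$, and hence for any open compact $V$ admitting an Iwahori decomposition with respect to $Q$ and containing $Q^{0}(\ZZ_{p})$ one has $[V\eta(p^{-1})V]^{s}=[V\eta(p^{-s})V]$, with no cancellation. (In the present generality this ought to be extractable from \cite{loeffler-spherical}; if not, it is the one place requiring the explicit $\bar N L N$-decomposition together with the contraction estimates, and is routine.) Applied to $V=U$ it gives $\mathcal{T}_{\eta}^{s}=[U\eta(p^{-s})U]$, and applied to $V=U^{(s)}$---which again has an Iwahori decomposition, with $\bar N$-part $\bar N_{U^{(s)}}:=\tau^{-s}\bar N_{U}\tau^{s}$ and the same $L$- and $N$-parts as $U$, and which contains $Q^{0}(\ZZ_{p})$---it gives $\mathcal{T}_{\eta}^{s}=[U^{(s)}\eta(p^{-s})U^{(s)}]$ as an operator on $\mathcal{C}^{\bullet}(U^{(s)},M)$. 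Now a double-coset operator $[V\eta(p^{-s})W]$, in its usual correspondence presentation, has for its two structure maps $Y(U^{(s)})\to Y(U)$ the natural covering and the $\eta(p^{-s})$-twisted one; unwinding the definitions of $\mathrm{cores}$ and of $[\tau^{s}]_{*}$ identifies the two halves of the operator with these maps, and for $V=W=U$ this is precisely the first triangle $\mathcal{T}_{\eta}^{s}=\mathrm{cores}_{U^{(s)}}^{U}\circ[\tau^{s}]_{*}$.

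For the second triangle I would compose correspondences: $[\tau^{s}]_{*}\circ\mathrm{cores}_{U^{(s)}}^{U}$ is the composite of $[1]\colon Y(U^{(s)})\to Y(U)$ (which induces $\mathrm{cores}$) with $[\eta(p^{-s})]\colon Y(U)\to Y(U^{(s)})$ (which induces $[\tau^{s}]_{*}$). Its support is $U^{(s)}\eta(p^{-s})\,U\cdot U\cdot U^{(s)}=U^{(s)}\eta(p^{-s})U$, and a short Iwahori-decomposition computation shows $U^{(s)}\eta(p^{-s})U=U^{(s)}\eta(p^{-s})U^{(s)}$: for $u=\bar n\,\ell\,n\in U$, conjugating $\bar n$ and $\ell$ across $\tau^{-s}$ lands them in $U^{(s)}$ (using $\tau^{-s}\bar N_{U}\tau^{s}=\bar N_{U^{(s)}}$ and that $\tau$ centralises $L$), while $n\in N_{0}\subseteq U^{(s)}$ passes to the right of $\tau^{-s}$ unchanged. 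Thus the composite is a multiple of $[U^{(s)}\eta(p^{-s})U^{(s)}]$, and the multiplicity is $1$ by the degree identity $[U:U^{(s)}]\cdot[U^{(s)}\eta(p^{-s})U:U]=[U^{(s)}\eta(p^{-s})U^{(s)}:U^{(s)}]$, which is immediate from the same equality of double cosets; with optimality at level $U^{(s)}$ this yields $[\tau^{s}]_{*}\circ\mathrm{cores}_{U^{(s)}}^{U}=[U^{(s)}\eta(p^{-s})U^{(s)}]=\mathcal{T}_{\eta}^{s}$. The only real obstacle is the optimality statement above---this is where strict dominance $\eta\in\Sigma^{++}$ genuinely enters---while everything else is a matter of matching coset representatives, which in each case are indexed by $\bar N_{U}/\tau^{-s}\bar N_{U}\tau^{s}$, and of keeping track of the $\tau^{-s}$-twist on coefficients.
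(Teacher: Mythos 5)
The paper does not actually reproduce a proof of this lemma: the proof reads in full ``This is a standard argument in Hida theory, compare e.g.~\cite[Proposition 4.1]{hida95} or \cite[Lemma 3.1]{tilouineurban99}.'' Your argument is precisely the standard Hida-theory argument that those references contain, and it is correct: the two triangle factorizations $\mathcal{T}_\eta^s = c\circ t$ and $\mathcal{T}_\eta^s = t\circ c$, together with the non-degeneration $[V\eta(p^{-1})V]^s = [V\eta(p^{-s})V]$ coming from the Iwahori decomposition, are exactly the ingredients. Your verification that $U^{(s)}$ again has an Iwahori decomposition containing $Q^0(\ZZ_p)$, and that $U^{(s)}\eta(p^{-s})U = U^{(s)}\eta(p^{-s})U^{(s)}$, are the right steps. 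One small point you could make tighter: the degree identity at the end reduces to $[\bar{N}_U : \tau^{-s}\bar{N}_U\tau^s] = [N_0 : \tau^s N_0\tau^{-s}]$, which is most cleanly seen from the fact that conjugation by $\tau$ has modulus independent of the particular compact open subgroup (or by pairing root spaces of $N$ and $\bar N$), rather than being ``immediate'' from the coset equality alone. Also note that strict dominance $\eta\in\Sigma^{++}$ is more than is needed for this lemma --- mere dominance $\eta\in\Sigma^{+}$ already gives the optimality you invoke, though of course the paper has fixed $\eta\in\Sigma^{++}$ throughout.
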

\begin{proof}
This is a standard argument in Hida theory, compare e.g.~\cite[Proposition 4.1]{hida95} or \cite[Lemma 3.1]{tilouineurban99}.
\end{proof}

  \begin{corollary}
   When $\mathcal{C}^{\bullet}(U, M)$ in addition admits an ordinary idempotent $e$ in the sense of Proposition \ref{prop:idem}, the corestriction maps induce isomorphisms
   \[
    eH^i(U^{(s)}, M) \cong eH^i(U, M),
   \]
   for any $s \ge 0$.
  \end{corollary}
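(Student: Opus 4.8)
The plan is to deduce the isomorphism from the commutative diagram of \cref{lem:hida} together with the fact that $\mathcal{T}_\eta$ acts invertibly on ordinary parts: the maps $\mathrm{cores}$ and $[\tau^s]_*$ will become mutually inverse, up to the invertible operator $\mathcal{T}_\eta^s$, after restriction to the ordinary summands.

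First I would observe that $\mathcal{C}^\bullet(U^{(s)}, M)$ carries an ordinary idempotent of its own, which I also denote $e$. Since $\tau$ centralises $L$ and conjugation by $\tau^{-s}$ only enlarges $N(\ZZ_p)$, the group $U^{(s)} = \tau^{-s} U \tau^s \cap U$ is again an open compact subgroup containing $Q^0(\ZZ_p)$; so for $M = D_{\mathrm{univ}}$ (the case needed for \cref{thm:rockwood}) the construction of \cref{prop:idem} applies verbatim with $U^{(s)}$ in place of $U$, and for general $M$ this is part of the standing hypothesis. Consequently $e H^i(U^{(s)}, M)$ and $e H^i(U, M)$ are direct summands on which $\mathcal{T}_\eta$ acts invertibly.

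Next I would record that the corestriction map $\mathrm{cores}\colon H^*(U^{(s)}, M) \to H^*(U, M)$ and the map $[\tau^s]_*\colon H^*(U, M) \to H^*(U^{(s)}, M)$ each commute with $\mathcal{T}_\eta$: this is the compatibility of the operators $\mathcal{T}'_\tau$ with the pushforward maps already invoked after \cref{def:iwacoh} (applied to the pair of levels $U^{(s)} \subseteq U$, both containing $Q^0(\ZZ_p)$), together with the $\tau$-equivariance built into $[\tau^s]_*$. Since $e$ is a limit of powers of $\mathcal{T}_\eta$, both maps commute with $e$ and hence restrict to maps $\overline{\mathrm{cores}}\colon eH^*(U^{(s)}, M) \to eH^*(U, M)$ and $\overline{[\tau^s]_*}$ in the opposite direction. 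By \cref{lem:hida} we then have $\overline{\mathrm{cores}} \circ \overline{[\tau^s]_*} = \mathcal{T}_\eta^s$ on $eH^*(U, M)$ and $\overline{[\tau^s]_*} \circ \overline{\mathrm{cores}} = \mathcal{T}_\eta^s$ on $eH^*(U^{(s)}, M)$; both are isomorphisms by \cref{prop:idem}, so $\overline{\mathrm{cores}}$ admits both a left and a right inverse and is therefore an isomorphism, as claimed.

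The step I expect to be the main obstacle is the commutation of $\mathrm{cores}$ and $[\tau^s]_*$ with $\mathcal{T}_\eta$, i.e.\ checking that the levels $U \supseteq U^{(s)}$ are compatible with the double coset defining $\mathcal{T}_\eta$; this is where the Iwahori decomposition of $U$, and the explicit description of $U^{(s)}$ it provides, genuinely enter. Everything else in the argument is formal manipulation of idempotents.
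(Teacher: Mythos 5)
Your argument is correct and is precisely the standard deduction the paper has in mind (the corollary is stated without proof as an immediate consequence of \cref{lem:hida}). One small simplification worth noting: you do not need to separately invoke the compatibility of $\cT'_\eta$ with pushforward maps, since the two commuting triangles of \cref{lem:hida} already give $\mathrm{cores}\circ\mathcal{T}_\eta^s = \mathrm{cores}\circ[\tau^s]_*\circ\mathrm{cores} = \mathcal{T}_\eta^s\circ\mathrm{cores}$ on cohomology (and likewise for $[\tau^s]_*$), and commutativity with $\mathcal{T}_\eta^s$ suffices to commute with $e=\lim_n \mathcal{T}_\eta^{n!}$ because $n!$ is eventually divisible by $s$.
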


  \begin{proof}
   On the image of the ordinary idempotent, the operator $(\cT'_{\eta})^s$ is invertible; so it follows from the previous proposition that the corestriction map is an isomorphism on this summand.
  \end{proof}

  \begin{remark}\label{rmk:indepU}
   In particular, if $U, U'$ are any open compacts in $G(\Zp)$ which both have Iwahori decompositions and satisfy $U \cap Q(\Zp) = U' \cap Q(\Zp)$, then $U^{(s)}$ is contained in $U \cap U'$ for all $s \gg 0$, and hence the pushforward map $\eQ H^*(U \cap U', M) \to \eQ H^*(U, M)$ is an isomorphism. The same holds with $U$ and $U'$ interchanged; so we deduce that $\eQ H^*(U, M)$ depends only on $U \cap Q(\Zp)$, as claimed above.
  \end{remark}

\begin{proposition}
There is an isomorphism
$$
    \eQ H^i(U, D_{\lambda, U}) \cong  \eQ H^i(U, V_{\lambda, \cO}).
$$
\end{proposition}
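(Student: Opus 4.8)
The plan is to bridge the two sides via a change-of-level isomorphism, a ``kill-the-kernel'' step for the projector $e'_Q$, and \cref{prop:comparelattices,lem:hida}. First I would note that $C_{\lambda,U}$, and hence $D_{\lambda,U}$, depends only on the Levi part $L_U$ of $U$ and not on $\bar N_U$. Fixing a subgroup $U_1\subseteq V_0'$ with Levi part $L_U$ and $\bar N$-part $\bar N_a$ for a suitable $a\ge 1$ (so that $U_1$ is a group of finite index in $V_0'$), one has $C_{\lambda,U}=\mathrm{coInd}_{U_1}^{V_0'}(\mathcal C)$ for the evident module $\mathcal C$ of continuous functions ``on $\bar N_a$''. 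Dualising and applying Shapiro's lemma for the finite covering $Y(K^pU_1)\to Y(K^pV_0')$ --- which is compatible with the operators $\cT'_\tau$, hence with $e'_Q$ --- then gives $e'_Q H^i(V_0',D_{\lambda,U})\cong e'_Q H^i(U_1,\mathcal D)$, where $\mathcal D=\mathrm{Hom}_{\mathrm{cont}}(\mathcal C,\cO)$. This is the step that reconciles the Levi part $L(\Zp)$ of $V_0'$ with the Levi part $L_U$ of $U$.

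Next I would dualise the inclusion $V^{\max}_{\lambda^\vee,\cO}\into\mathcal C$ to obtain an exact sequence $0\to\mathcal K\to\mathcal D\to V^{\min}_{\lambda,\cO}\to 0$ of $U_1$- and $\tau^{-1}$-modules (the quotient of $\mathcal C$ being $\cO$-torsion-free, so dualising is exact, and using that the dual of the maximal lattice is the minimal lattice). The heart of the argument is the vanishing $e'_Q H^i(U_1,\mathcal K)=0$. For this I would observe that $\mathcal C^\bullet(U_1,\mathcal K)$ is a complex of profinite $\cO$-modules on which $\cT'_\eta$ acts topologically nilpotently: dually, $\cT'_\eta$ acts on $\mathcal C/V^{\max}_{\lambda^\vee,\cO}$ as $p^{h}$ (with $h=\langle\lambda,\eta\rangle$) times the $\tau$-action, which precomposes a continuous function on $\bar N_a$ with the strictly contracting conjugation $\bar n\mapsto\tau^{-1}\bar n\tau$; by \cref{lemma:preservelattice}, and this is where $\eta\in\Sigma^{++}$ enters, this carries $\mathcal C/V^{\max}_{\lambda^\vee,\cO}$ into $p$ times itself modulo any prescribed member of the profinite filtration, so $(\cT'_\eta)^n\to 0$ and $e'_Q=\lim_n(\cT'_\eta)^{n!}$ annihilates the complex. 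Hence $e'_Q H^i(U_1,\mathcal D)\cong e'_Q H^i(U_1,V^{\min}_{\lambda,\cO})$ --- an ``infinite-level'' analogue of \cref{prop:comparelattices} --- and one further application of \cref{prop:comparelattices} rewrites this as $e'_Q H^i(U_1,V_{\lambda,\cO})$.

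Finally I would change level from $U_1$ to $U$. These differ only in the $\bar N$-direction, and both lie in the $\tau$-tower over $Q^0(\Zp)$: for $s\gg 0$ the group $U^{(s)}=\tau^{-s}U\tau^s\cap U$ is sandwiched between two members of the family $\{\bar N_b L_U N_0\}_b$, whose consecutive corestriction maps are all isomorphisms on $e'_Q$-parts by \cref{lem:hida} and its corollary; chasing corestriction maps gives $e'_Q H^i(U,V_{\lambda,\cO})\cong e'_Q H^i(U_1,V_{\lambda,\cO})$, and combining the three steps proves the proposition.

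The hard part is the middle step --- verifying that $\cT'_\eta$ is topologically nilpotent on $\mathcal C^\bullet(U_1,\mathcal K)$, equivalently that $e'_Q$ kills the infinite-rank ``tail'' of the distribution module. This is exactly where the strict-dominance hypothesis $\eta\in\Sigma^{++}$ and the normalisation factor $p^{h}$ in the definition of $\cT'_\eta$ are indispensable, and it requires combining the $p$-adic contraction of $\tau$ on $\bar N_a$ with the weight-space estimates of \cref{lemma:preservelattice} and with the double-coset combinatorics used to define $\cT'_\eta$.
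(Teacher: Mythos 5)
Your overall strategy is in the same spirit as the paper's argument: both proofs run on the same three ingredients --- Shapiro's lemma for a finite covering, Hida's corestriction lemma (\cref{lem:hida}) to move up the $\tau$-tower, and the fact that $e'_Q$ contracts the $\bar N$-direction of the distribution module --- so the two routes are close relatives rather than genuinely independent. The packaging, however, is different: the paper works modulo $p^s$, uses $e'_Q$-contraction together with \cref{lem:hida} to identify $e'_Q H^i(V_0', D_{\lambda,U}/p^s)$ with $e'_Q H^i(V_0^{(s)}, \cO/p^s[-\lambda]\otimes_{\cO[\mathfrak{S}_U]}\cO/p^s[\mathfrak{S}])$, applies Shapiro at this finite level, and then passes to the inverse limit; you instead keep everything integral, apply Shapiro first to descend from $V_0'$ to $U_1$, and package the $e'_Q$-contraction as a vanishing statement for the cohomology of a kernel module $\mathcal K$ in a short exact sequence $0\to\mathcal K\to\mathcal D\to V^{\min}_{\lambda,\cO}\to 0$.

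Two steps in your version need more than you give them. First, the exactness of $0\to\mathcal K\to\mathcal D\to V^{\min}_{\lambda,\cO}\to 0$ --- equivalently, surjectivity of the dual map --- requires that the cokernel of $V^{\max}_{\lambda^\vee,\cO}\into\mathcal C$ be a ``good'' $\cO$-module (so that $\mathrm{Hom}_{\mathrm{cont}}(-,\cO)$ remains exact). Torsion-freeness of this cokernel is not automatic: the maximal admissible lattice is the intersection of $V_{\lambda^\vee}$ with the coordinate ring of $G_{/\cO}$ and is thus characterised by $G(\Zp)$-integrality, whereas $\mathcal C$ only sees $\cO$-valuedness on the contracted domain $\bar N_a$ (with $a\ge 1$); it is easy to produce polynomial functions integral on $\bar N_1$ but not in $\cO[G]$, so saturation of $V^{\max}_{\lambda^\vee,\cO}$ inside $\mathcal C$ must be argued, not assumed. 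Second, and this is the crux you flag yourself, the topological nilpotence of $\cT'_\eta$ on $\mathcal C^\bullet(U_1,\mathcal K)$ is not established by the observation that the $\tau$-action contracts $\bar N_a$. Contracting the domain of a continuous function is a reparametrisation, not multiplication by $p$: on the polynomial part one can read off the eigenvalue $p^{\langle\eta,\nu\rangle}$ weight space by weight space (which is where \cref{lemma:preservelattice} applies), but on the quotient $\mathcal C/V^{\max}_{\lambda^\vee,\cO}$ there is no such grading, and the double-coset sum defining $\cT'_\eta$ also intervenes. The paper's mod-$p^s$ formulation avoids exactly this difficulty by letting \cref{lem:hida} absorb the contraction of $\bar N$ into a level shift $V_0'\rightsquigarrow V_0^{(s)}$, at which point the coefficient module becomes the finitely generated module $\cO/p^s[-\lambda]\otimes_{\cO[\mathfrak{S}_U]}\cO/p^s[\mathfrak{S}]$ and Shapiro can take over. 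You would be on safer ground rephrasing your ``kill $\mathcal K$'' step mod $p^s$, proving the nilpotence there (where the filtration is finite), and then taking the limit --- which is essentially what the paper does.
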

\begin{proof}
    We sketch the proof. From the previous corollary, for all $s \ge 0$ we have
    \[ \eQ H^i(U, V_{\lambda, \cO}) = \eQ H^i(U^{(s)}, V_{\lambda, \cO}) = \eQ H^i(U, \cO[U] \otimes_{\cO[U^{(s)}]} V_{\lambda, \cO}). \]
    (Here we are using the Cartesian property to see that the fibres of $Y_G(U^{(s)})$ over $Y_G(U)$ are naturally identified with $U / U^{(s)}$; this step would break down in the absence of condition (SV5).)

    From the Iwahori decomposition, we have $U / U^{(s)} \cong \bar{N}_U / \bar{N}_U^{(s)}$; and as $\bigcap_s \bar{N}_U^{(s)} = \{1\}$, the inverse limit of the profinite $U$-modules $\cO[U] \otimes_{\cO[U^{(s)}]} V_{\lambda, \cO}$ is identified with $\cO[[U]] \otimes_{\cO[[L_U N_0]]} V_{\lambda, \cO}$. Moreover, the action of $e_{Q}'$ kills all the $S$-eigenspaces in $V_{\lambda, \cO}$ except the highest-weight space, which is one-dimensional with $L_U$ acting via $\lambda$. So we obtain the cohomology of $\cO[[U]] \otimes_{\cO[[L_U N_0]]} \cO[\lambda] = D_{\lambda, U}$.
\end{proof}

\begin{corollary}
 \label{cor:eQiso}
    We have an isomorphism
$$
     \eQ H^i(U, D_{\univ}) \cong \eQ H^i_{\Iw}(Q^0, \cO).
$$
\end{corollary}
\begin{proof}
 This follows in a similar fashion to the above, identifying $D_{\univ}$ with the module of measures on the quotient $U / Q^0(\Zp)$.
\end{proof}

 Define $M^{\bullet}$ to be the image of  $ \eQ\mathcal{C}^{\bullet}(U, D_{\univ})$ in the subcategory $\mathcal{D}^{\mathrm{flat}}(R)$ of the derived category of $R$-modules generated by flat objects. Note that this is independent of the choice of $U$, since the maps induced by shrinking $U$ are quasi-isomorphisms from the last proposition.

\begin{proposition}
    Suppose $\eQ\mathcal{C}^{\bullet}(U, D_{\univ})$ is concentrated in degrees $[a,b]$. Then $M^{\bullet}$ is represented by a perfect complex concentrated in degrees $[a,b]$.
\end{proposition}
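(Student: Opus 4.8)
The plan is to show that the complex $C^{\bullet} \coloneqq e'_{Q}\mathcal{C}^{\bullet}(V_{0}', D_{\mathrm{univ}})$ representing $M^{\bullet}$ is perfect over $R$, and then to read off the stated amplitude from the general structure theory of perfect complexes. By Proposition~\ref{prop:idem}, $C^{\bullet}$ is a bounded complex of flat $R$-modules; a bounded complex of flat modules has finite Tor-dimension, so $C^{\bullet}$ is perfect as soon as it is pseudo-coherent. Moreover $R$ is Noetherian and complete with respect to its Jacobson radical $J$, with $R/J$ a finite product of finite fields: this follows from $\mathfrak{S} \cong \Zp^{d} \times F$ with $F$ finite of order prime to $p$, which makes $R$ a finite product of power series rings over complete unramified discrete valuation rings. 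Granting pseudo-coherence, the hypothesis that $C^{\bullet}$ is concentrated in degrees $[a,b]$ — hence, being a complex of flats supported there, of Tor-amplitude in $[a,b]$ — forces any representative by a complex of finite projectives to lie in degrees $[a,b]$. So the whole point is the pseudo-coherence of $C^{\bullet}$.

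To establish pseudo-coherence I would reduce modulo $J$. The terms of $C^{\bullet}$ are direct summands of finite direct sums of copies of $D_{\mathrm{univ}} \cong \cO[[Q(\Zp)]]$, hence are profinite, hence $J$-adically complete, $R$-modules; being a bounded complex of derived $J$-complete modules, $C^{\bullet}$ is derived $J$-complete. By the completion criterion for pseudo-coherence — over a Noetherian ring complete along $J$, a derived $J$-complete object of $D(R)$ is pseudo-coherent precisely when its derived base change to $R/J$ is pseudo-coherent over $R/J$ — and since $R/J$ is Artinian, it suffices to show that $C^{\bullet} \otimes^{\mathbb{L}}_{R} R/J$ has finite-dimensional cohomology.

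Now $\mathcal{T}'_{\eta}$, and hence $e'_{Q}$, is $R$-linear, because $\tau = \eta(p)$ lies in $A \subseteq Z(L)$ and so conjugation by $\tau$ is trivial on $L$, in particular on $\mathfrak{S}$. Using this and the flatness of $D_{\mathrm{univ}}$,
\[
 C^{\bullet} \otimes^{\mathbb{L}}_{R} R/J \;=\; C^{\bullet} \otimes_{R} R/J \;=\; e'_{Q}\,\mathcal{C}^{\bullet}\bigl(V_{0}',\, D_{\mathrm{univ}} \otimes_{R} R/J\bigr),
\]
and $D_{\mathrm{univ}} \otimes_{R} R/J \cong (R/J)\,\hat{\otimes}_{\cO}\,\cO[[\bar{N}_{0}]]$ is the module of $(R/J)$-valued measures on $\bar{N}_{0}$. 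Its ordinary cohomology can be computed by the argument of the Proposition above identifying $e'_{Q}H^{i}(V_{0}', D_{\lambda, U})$ with $e'_{Q}H^{i}(U, V_{\lambda})$, run verbatim with $\cO$ replaced by the finite ring $R/J$ and $\lambda$ trivial: the idempotent $e'_{Q}$ contracts the $\bar{N}_{0}$-direction, and Lemma~\ref{lem:hida} together with Shapiro's lemma identify $e'_{Q}H^{i}(V_{0}', D_{\mathrm{univ}} \otimes_{R} R/J)$ with the ordinary part of the cohomology of a fixed (sufficiently deep) locally symmetric space with coefficients in $R/J$. Since $R/J$ is finite and the space is compact (Borel–Serre), this is finite-dimensional over $R/J$. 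Hence $C^{\bullet}$ is pseudo-coherent, therefore perfect, and the amplitude assertion follows as above.

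The only non-formal ingredient is the last one: identifying the derived reduction $C^{\bullet} \otimes_{R} R/J$ and checking its cohomology is finite-dimensional. This is precisely where the geometric input is needed — the same ``$e'_{Q}$ contracts $\bar{N}_{0}$'' phenomenon that powers the control theorem — everything else being standard homological algebra once one knows $R$ is Noetherian, complete along $J$, with Artinian residue quotient $R/J$.
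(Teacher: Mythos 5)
Your proof is correct and follows essentially the same route as the paper: reduce the flat complex modulo the Jacobson radical $J$, show that the resulting cohomology is finite, and invoke a Nakayama-type perfectness criterion for complexes of complete flat modules over a complete Noetherian semi-local ring. The difference is one of presentation: the paper simply computes $H^i(M^\bullet \otimes_R R/J) = e'_Q H^i(V_1, \cO/p)$, notes finiteness, and then cites \cite[Proposition 2.2.1]{pilloni20} (handling the semi-local issue by localising at each maximal ideal), whereas you unwind the content of Pilloni's lemma — boundedness of Tor-amplitude for a bounded flat complex, derived $J$-completeness of $C^\bullet$, and the criterion that a derived $J$-complete object over a $J$-complete Noetherian ring is pseudo-coherent iff its derived reduction to $R/J$ is — and then verify the finiteness of the reduced cohomology by rerunning the control-theorem argument with $\cO$ replaced by $R/J$. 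Both routes are valid; yours is more self-contained, the paper's is shorter by outsourcing the homological algebra to Pilloni.
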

\begin{proof}
Let $U_1 = \{ u \in U : u \bmod p \in Q^0(\FF_p)\}$. Then the cohomology groups
$$
    H^i(M^{\bullet}\otimes_R R/J(R)) = \eQ H^i(U_1, \cO/p)
$$
are finite, and thus we can use the same method as \cite[Proposition 2.2.1]{pilloni20} to obtain the result. (The lemma as stated is for complete Noetherian \emph{local} rings, and our ring $R$ is only semi-local; but $R$ is isomorphic to the direct product of its localisations at the finitely many maximal ideals of $R$, so we may obtain the result for complexes of $R$-modules by applying the proposition to each localisation and taking the direct product.)
\end{proof}

 \begin{theorem}
For all $r \ge 1$ and $\lambda \in X_+^{\bullet}(S)$ there is a quasi-isomorphism
$$  M^{\bullet}\otimes^{\mathbb{L}}_{\cO[[\fS_U]]} \cO[-\lambda] \cong \eQ\mathcal{C}^{\bullet}(U, V_{\lambda})
$$
 \end{theorem}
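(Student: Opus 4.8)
The plan is to unwind $M^{\bullet}$ to its flat representative, commute the weight specialisation past the cochain functor, and then invoke a chain-level form of the comparison isomorphism $e_{Q}'H^{i}(V_{0}',D_{\lambda,U})\cong e_{Q}'H^{i}(U,V_{\lambda})$ proved above. First I would replace $M^{\bullet}$ by the complex $e_{Q}'\mathcal{C}^{\bullet}(V_{0}',D_{\mathrm{univ}})$ representing it in $\mathcal{D}^{\mathrm{flat}}(R)$. By Proposition~\ref{prop:idem} its terms are flat over $R$; since $\mathfrak{S}_{U}$ has finite index in $\mathfrak{S}$, the Iwasawa algebra $R=\cO[[\mathfrak{S}]]$ is finite free over $\cO[[\mathfrak{S}_{U}]]$, so these terms are also flat over $\cO[[\mathfrak{S}_{U}]]$, and therefore $M^{\bullet}\otimes^{\mathbb{L}}_{\cO[[\mathfrak{S}_{U}]]}\cO[-\lambda]$ is computed by the naive tensor product $e_{Q}'\mathcal{C}^{\bullet}(V_{0}',D_{\mathrm{univ}})\otimes_{\cO[[\mathfrak{S}_{U}]]}\cO[-\lambda]$.

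Next I would commute this tensor product inside. Each term of $\mathcal{C}^{\bullet}(V_{0}',-)$ is a finite direct sum of copies of the coefficient module, with differentials given by matrices over the group rings $\ZZ[\Gamma_{i}]$; the $\cO[[\mathfrak{S}_{U}]]$-module structure on $D_{\mathrm{univ}}$ is the restriction along $\cO[[\mathfrak{S}_{U}]]\hookrightarrow R$ of the $R$-structure, which comes from the left-translation action of $Q(\Zp)/Q^{0}(\Zp)\cong\mathfrak{S}$ (valid because $Q^{0}(\Zp)\trianglelefteq Q(\Zp)\subseteq V_{0}'$), and this commutes with the right-translation action of $V_{0}'$, hence with the $\Gamma_{i}$-actions. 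So $\mathcal{C}^{\bullet}(V_{0}',-)$ commutes with $-\otimes_{\cO[[\mathfrak{S}_{U}]]}\cO[-\lambda]$, and so does the $R$-linear idempotent $e_{Q}'$. Combining this with the isomorphism $D_{\mathrm{univ}}\otimes_{\cO[[\mathfrak{S}_{U}]]}\cO[-\lambda]\cong D_{\lambda,U}$ (immediate from the explicit power-series descriptions, and compatible with the actions of $V_{0}'$ and $\tau$), and noting that the structure map $D_{\mathrm{univ}}\to D_{\lambda,U}$ intertwines $\mathcal{T}_{\eta}'$ so that the induced idempotent is the $e_{Q}'$ attached to $D_{\lambda,U}$, I obtain a quasi-isomorphism $M^{\bullet}\otimes^{\mathbb{L}}_{\cO[[\mathfrak{S}_{U}]]}\cO[-\lambda]\simeq e_{Q}'\mathcal{C}^{\bullet}(V_{0}',D_{\lambda,U})$.

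It then remains to upgrade the comparison isomorphism $e_{Q}'H^{i}(V_{0}',D_{\lambda,U})\cong e_{Q}'H^{i}(U,V_{\lambda})$ to a quasi-isomorphism of complexes $e_{Q}'\mathcal{C}^{\bullet}(V_{0}',D_{\lambda,U})\simeq e_{Q}'\mathcal{C}^{\bullet}(U,V_{\lambda})$, which I would do by rereading its proof at the chain level. The $V_{0}'$- and $\tau^{-1}$-equivariant structure maps $D_{\mathrm{univ}}\to D_{\lambda,U}\to V_{\lambda,\cO}^{\mathrm{min}}\into V_{\lambda,\cO}$, the corestriction maps, and Shapiro's lemma are all realised by chain maps or chain homotopy equivalences; the ``error'' coefficient modules entering each step (kernels and cokernels supported in lower relative weight spaces, or carrying surplus $\bar{N}_{0}$-directions) have $e_{Q}'\mathcal{C}^{\bullet}(V_{0}',-)$ acyclic, because $\mathcal{T}_{\eta}'$ is topologically nilpotent on them by Lemma~\ref{lemma:preservelattice} and the argument of Proposition~\ref{prop:comparelattices}; since $e_{Q}'$ is exact, each step therefore becomes a quasi-isomorphism after applying $e_{Q}'\mathcal{C}^{\bullet}(V_{0}',-)$. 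The one extra point is the passage to the inverse limit over truncations mod $p^{s}$ used in that proof: a Mittag--Leffler argument (the transition maps on truncated cochains are surjective and the relevant cohomology groups are finitely generated) shows cohomology commutes with $\varprojlim_{s}$, so the chain-level quasi-isomorphism survives. Splicing the two quasi-isomorphisms yields the theorem.

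I expect the main obstacle to be this last step: one must check that \emph{every} map occurring in the proof of the comparison isomorphism -- in particular Shapiro's lemma and the $\bar{N}_{0}$-contraction implicit in $e_{Q}'$ -- is already defined on cochains and is compatible with $e_{Q}'$ and the prime-to-$p$ Hecke action, and then handle the $\varprojlim_{s}$ with care. By contrast, the first two paragraphs are essentially formal once one knows that $D_{\mathrm{univ}}$ is $\cO[[\mathfrak{S}_{U}]]$-flat and that the $\mathfrak{S}$- and $V_{0}'$-actions on it commute.
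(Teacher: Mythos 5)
Your proposal follows the paper's argument closely: represent $M^{\bullet}$ by the flat complex $e'_Q\mathcal{C}^{\bullet}(V_0',D_{\mathrm{univ}})$, commute the specialisation through the cochain functor using $D_{\mathrm{univ}}\otimes_{\cO[[\mathfrak{S}_U]]}\cO[-\lambda]\cong D_{\lambda,U}$, and then invoke the comparison isomorphisms already established. The extra care you take --- checking $\cO[[\mathfrak{S}_U]]$-flatness via the finite free extension $\cO[[\mathfrak{S}_U]]\subset R$, noting that the $\mathfrak{S}$-action (left) commutes with the $V_0'$-action (right), and promoting the cohomology-level comparison to a zig-zag of chain-level quasi-isomorphisms with a Mittag--Leffler argument for the $\varprojlim_s$ --- is exactly the detail the paper's terse ``by the previous lemmas'' leaves implicit.
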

 \begin{proof}
  Since $M^{\bullet}$ is represented by the flat complex $\eQ\mathcal{C}^{\bullet}(U, D_{\univ})$, we can compute the derived tensor product as
  \[
   \eQ\mathcal{C}^{\bullet}(U, D_{\univ})\otimes_{\cO[[\fS_U]]}\cO[-\lambda] = \eQ\mathcal{C}^{\bullet}(U, D_{\univ}\otimes_{\cO[[\fS_U]]}\cO[-\lambda])
  \]
  (where we do not need to take derived tensor product, as the terms of the complex are flat). By the previous lemmas this complex is quasi-isomorphic to $\eQ\mathcal{C}^{\bullet}(U, V_{\lambda})$.
 \end{proof}

 \begin{corollary}
     There is a spectral sequence
\begin{equation} \label{eq:spseq}
    E_2^{i, j}: \mathrm{Tor}^{\cO[[\fS_U]]}_{-i}(\eQ H^j_{\Iw}(Q^0, \cO), \cO[-\lambda]) \implies \eQ H^{i + j}(U, V_{\lambda})
\end{equation}
supported in the second quadrant.
 \end{corollary}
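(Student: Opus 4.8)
The plan is to obtain \eqref{eq:spseq} as a purely formal consequence of the quasi-isomorphism
\[ M^{\bullet}\otimes^{\mathbb{L}}_{\cO[[\mathfrak{S}_{U}]]} \cO[-\lambda] \;\cong\; e'_{Q}\mathcal{C}^{\bullet}(U, V_{\lambda}) \]
established just above, by invoking the standard hyper-$\operatorname{Tor}$ (universal coefficient) spectral sequence of a derived tensor product. First I would recall the two facts already in hand: that $M^{\bullet}$ is represented by the \emph{bounded} complex $e'_{Q}\mathcal{C}^{\bullet}(V_{0}', D_{\mathrm{univ}})$ of \emph{flat} $\cO[[\mathfrak{S}_{U}]]$-modules, and that $H^{j}(M^{\bullet}) = e'_{Q}H^{j}(V_{0}', D_{\mathrm{univ}}) \cong e'_{Q}H^{j}_{\mathrm{Iw}}(Q^{0},\cO)$. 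Then I would choose a resolution $P_{\bullet}\to\cO[-\lambda]$ by finitely generated projective $\cO[[\mathfrak{S}_{U}]]$-modules, which can be taken bounded because $\cO[[\mathfrak{S}_{U}]]$ is regular of finite Krull dimension, and form the double complex $P_{\bullet}\otimes_{\cO[[\mathfrak{S}_{U}]]} M^{\bullet}$; since the terms of $M^{\bullet}$ are flat, the total complex of this double complex represents the left-hand side of the displayed quasi-isomorphism.

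Filtering this bounded double complex by the $P_{\bullet}$-degree and running the spectral sequence of the filtration yields
\[ E_{2}^{i,j} = \operatorname{Tor}^{\cO[[\mathfrak{S}_{U}]]}_{-i}\!\big(H^{j}(M^{\bullet}),\, \cO[-\lambda]\big) \;\Longrightarrow\; H^{i+j}\!\big(M^{\bullet}\otimes^{\mathbb{L}}_{\cO[[\mathfrak{S}_{U}]]}\cO[-\lambda]\big), \]
the minus sign on $i$ being the usual reconciliation of the cohomological grading on $M^{\bullet}$ with the homological grading on $\operatorname{Tor}$. Substituting the two identifications recalled above — $H^{j}(M^{\bullet}) \cong e'_{Q}H^{j}_{\mathrm{Iw}}(Q^{0},\cO)$ on the source and $H^{i+j}(M^{\bullet}\otimes^{\mathbb{L}}\cO[-\lambda]) \cong e'_{Q}H^{i+j}(U, V_{\lambda})$ on the target — turns this into \eqref{eq:spseq}. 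Support in the second quadrant is then automatic: $\operatorname{Tor}_{-i}$ vanishes for $i>0$ (and for $-i$ exceeding the projective dimension of $\cO[-\lambda]$), while $e'_{Q}H^{j}_{\mathrm{Iw}}(Q^{0},\cO)$ vanishes for $j<0$ and for $j$ above the cohomological dimension of the symmetric space, so the $E_{2}$-page is concentrated in a finite rectangle inside $\{i\le 0,\ j\ge 0\}$ and the spectral sequence converges without any subtlety. Compatibility with the Hecke operators away from $p$ and with the operators $\{\mathcal{T}'_{\tau}\}$ is inherited for free, since the quasi-isomorphism and both identifications are realised by Hecke-equivariant maps.

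There is essentially no mathematical obstacle here beyond bookkeeping; the only point requiring a moment's care is the same semilocality issue encountered in the preceding proposition: $\cO[[\mathfrak{S}_{U}]]$ need not be local, so the claims ``$\cO[-\lambda]$ has finite projective dimension'' and ``$P_{\bullet}$ may be chosen bounded'' should, strictly, be checked after localising at each of its finitely many maximal ideals. Beyond that, one simply has to keep the reindexing $i\mapsto -i$ consistent throughout, so that the resulting spectral sequence genuinely lands in the second quadrant as asserted.
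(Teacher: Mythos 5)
Your proposal is correct and is essentially the paper's proof: the paper simply cites the standard hyper-$\operatorname{Tor}$ spectral sequence (Stacks Project tag 061Y) applied to the quasi-isomorphism $M^{\bullet}\otimes^{\mathbb{L}}_{\cO[[\mathfrak{S}_{U}]]}\cO[-\lambda]\cong e'_{Q}\mathcal{C}^{\bullet}(U,V_{\lambda})$, which is exactly what you construct by hand via the double complex $P_{\bullet}\otimes M^{\bullet}$ and the filtration by $P$-degree. Your write-up is a more detailed unwinding of the same one-line argument, and your remarks on boundedness and the semilocality of $\cO[[\mathfrak{S}_{U}]]$ are accurate (though not strictly needed, since $M^{\bullet}$ itself is already a bounded complex of flat modules, so it can serve directly as the flat resolution computing the derived tensor product).
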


 \begin{proof} This follows from the previous theorem using the K\"unneth spectral sequence for the Tor functor (see e.g.~\cite[Theorem 5.6.4]{weibel94}).
 \end{proof}

\begin{definition}
\label{def:mommapfinite}
We denote by
\[ \mom^{\lambda}_U: \eQ H^j_{\Iw}(Q^0, \cO) \otimes_{\cO[[\fS_U]]} \cO[-\lambda] \longrightarrow \eQ H^i(U, V_{\lambda, \cO})\]
the edge map of the spectral sequence \eqref{eq:spseq}.
\end{definition}

We will need below an explicit description of this map as an inverse limit. The natural edge map
 $$
    \eQ H^i(U, D_{\univ}) \otimes_{\cO[[\fS_U]]}\cO[-\lambda] \to \eQ H^i(U, V_{\lambda, \cO})
 $$
 comes from the map
$$
    H^i(U, D_{\univ}) \to H^i(U, D_{\univ}\otimes V_{\lambda, \cO})
$$
induced by the map on coefficients given by $\mu \mapsto \mu \otimes f_{\lambda}^{\hw}$, composed with projection to level $U$. Since modulo $p^s$ we have $f^{\hw}_{\lambda}\in H^0(U^{(s)}, V_{\lambda, \cO}/(p^s))$ it's easy to see that the above map is given by the composite of
\begin{align*}
 \mathrm{mom}^{\lambda}:  H^i_{\Iw}(Q^0, \cO) \to H^i_{\Iw}(Q^0, V_{\lambda, \cO}) \\
  x \mapsto \varprojlim_s \left( x \cup f_{\lambda}^{\hw} \ \mathrm{mod} \ p^s\right ).
\end{align*}
and projection to level $U$.

  \section{Branching laws for algebraic representations}

 We recall the setup considered in \cite{loeffler-spherical}, where we consider pushforward of ``Eisenstein-type'' cohomology classes from a group $H$ to a larger group $G$. We shall fix an embedding $\cH \into \cG$ of reductive groups over $\QQ$, extending to an embedding $H \into G$ of reductive group schemes over $\Zp$.

 \subsection{Flag varieties}
  \label{sect:pfsetup}

  We fix choices of subgroups $Q, S$ etc as in \S \ref{sect:setup}, both for $G$ and for $H$; and we distinguish between them using subscripts, so $Q_G$ and $Q_H$ are parabolics in $G$ and $H$ respectively. Note that we do not assume any direct compatibility between these (e.g.~we do not suppose that $Q_H = Q_G \cap H$ etc).
  We are interested in the action of $H$, and its subgroup $Q_H^0$, on the flag variety $\cF = G / \overline{Q}_G$. We assume that there exists $u \in \cF(\Zp)$ such that
  \begin{enumerate}[(A)]
   \item the $Q_H^0$-orbit of $u$ is Zariski-open in $\cF$, and
   \item the image of $\overline{Q}_G \cap u^{-1}Q_H^0u$ under the projection $\overline{Q}_G \onto L_G \onto S_G$ is contained in the subtorus $S_G^0$.
  \end{enumerate}

  \begin{remark}
   Note that the validity of (B) is independent of the choice of representative $u$ in (A), since $\overline{Q}_G \cap u^{-1}Q_H^0u$ is well-defined up to conjugation (and $S_G$ is commutative).
  \end{remark}

 \subsection{Highest-weight representations}

  \begin{proposition}
   For any $\lambda \in X^\bullet_+(S_G / S_G^0)$, we have $\dim (V^G_\lambda)^{Q_H^0} \le 1$. If this dimension is 1, then $(V_\lambda^G)^{Q_H^0}$ is spanned by the unique polynomial $\br_{\lambda} \in K[G]$ (a ``branching polynomial'') such that $\br_\lambda\left(\overline{q} u^{-1} q'\right) = \lambda(\overline{q})$ for $\overline{q} \in \overline{Q}_G$ and $q' \in Q_H^0$.
  \end{proposition}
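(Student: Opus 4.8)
The plan is to argue entirely inside the Borel--Weil model of the excerpt. Since $\lambda$ is trivial on $S_G^0$, it defines a character of $\overline{Q}_G$ through $\overline{Q}_G \onto L_G \onto S_G \onto S_G/S_G^0$, and $V_\lambda^G$ is realised as $\{f \in K[G] : f(\overline q\, g) = \lambda(\overline q)\, f(g)\ \forall\, \overline q \in \overline{Q}_G,\ g \in G\}$, with $G$ acting by right translation. Taking $Q_H^0$-invariants for this action gives
\[ (V_\lambda^G)^{Q_H^0} = \{\, f \in K[G] : f(\overline q\, g\, q') = \lambda(\overline q)\, f(g) \ \text{ for all } \overline q \in \overline{Q}_G,\ q' \in Q_H^0,\ g \in G\,\}. \]
Fix a lift of $u$ to $G$, still written $u$ (choosing a different lift merely rescales the polynomial produced below; after enlarging $K$ if necessary such a lift exists, and the conclusions then descend to the original $K$ by uniqueness).

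First I would single out a dense open subset of $G$ over which a $Q_H^0$-invariant section is rigidified. Put $\Omega := \overline{Q}_G\, u^{-1}\, Q_H^0 \subseteq G$. A direct computation shows that $\Omega$ is the preimage of the $Q_H^0$-orbit of $u$ under the map $g \mapsto g^{-1}\overline{Q}_G$ from $G$ to $\cF = G/\overline{Q}_G$; by hypothesis~(A) this orbit is Zariski-open, so $\Omega$ is open in $G$, and since $G$ is connected and $u^{-1} \in \Omega$, it is dense. Hence a regular function on $G$ that vanishes on $\Omega$ vanishes identically, so any $f \in (V_\lambda^G)^{Q_H^0}$ is determined by $f|_\Omega$; and for $g = \overline q\, u^{-1}\, q' \in \Omega$ the equivariance forces $f(g) = \lambda(\overline q)\, f(u^{-1})$, so $f$ is in fact determined by the single scalar $f(u^{-1})$. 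Therefore $f \mapsto f(u^{-1})$ is injective on $(V_\lambda^G)^{Q_H^0}$, giving $\dim (V_\lambda^G)^{Q_H^0} \le 1$.

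For the second claim, suppose the dimension equals $1$ and pick $f \neq 0$ in the space. If $f(u^{-1}) = 0$ then $f$ vanishes on $\Omega$, hence on $G$, a contradiction; so we may scale $f$ so that $f(u^{-1}) = 1$. Then for all $\overline q \in \overline{Q}_G$, $q' \in Q_H^0$,
\[ f(\overline q\, u^{-1}\, q') = \lambda(\overline q)\, f(u^{-1}\, q') = \lambda(\overline q)\, f(u^{-1}) = \lambda(\overline q), \]
the middle equality by $Q_H^0$-invariance. Thus $f$ is a polynomial $\br_\lambda$ with the stated transformation property; it is unique with this property because its restriction to the dense set $\Omega$ is prescribed, and it spans $(V_\lambda^G)^{Q_H^0}$ by the dimension bound. (Conversely, any polynomial satisfying the displayed identity manifestly lies in $(V_\lambda^G)^{Q_H^0}$; this is the point at which hypothesis~(B) is relevant, since~(B) is exactly the condition that $\overline q\, u^{-1}\, q' \mapsto \lambda(\overline q)$ be well defined on $\Omega$: the ambiguity in $\overline q$ is controlled by $\overline{Q}_G \cap u^{-1} Q_H^0 u$, whose image in $S_G$ lies in $S_G^0 \subseteq \ker \lambda$ precisely by~(B).)

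I do not expect a genuine obstacle: the argument is soft. The points that need care are purely organisational — aligning the left/right conventions so that the open $Q_H^0$-orbit of~(A) becomes the concrete dense subset $\Omega = \overline{Q}_G u^{-1} Q_H^0$ of $G$ on which the one-dimensional space $(V_\lambda^G)^{Q_H^0}$ is pinned down by the value at $u^{-1}$, and recognising~(B) as the consistency condition for the branching-polynomial formula. The need to lift $u$ to $G$ is harmless after a finite extension of scalars.
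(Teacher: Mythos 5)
Your proof is correct and follows essentially the same approach as the paper's: work in the Borel--Weil model, observe that $\Omega = \overline{Q}_G\,u^{-1}\,Q_H^0$ is Zariski-dense in $G$ by hypothesis (A) and connectedness, and deduce that a $Q_H^0$-invariant $f$ is pinned down by the single value $f(u^{-1})$. Your write-up supplies a bit more detail than the paper's (notably, making explicit that hypothesis (B) is the consistency condition ensuring the branching polynomial is well defined), but the core argument is identical.
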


  \begin{proof}
   Suppose $f \in (V_\lambda^G)^{Q_H^0}$. Then the restriction of $f$ to the subvariety $\overline{Q}_G  u^{-1} Q_H^0$ of $G$ is uniquely determined by $f(u^{-1})$. Since $\overline{Q}_G  u^{-1} Q_H^0$ is open (and $G$ is connected), this implies that $f$ itself is uniquely determined, as an element of $K[G]$, by $f(u^{-1})$. In particular, if $f$ is non-zero, then we can scale it so that $f(u^{-1}) = 1$.
  \end{proof}

  \begin{remark}
   Conversely, if $S_G^0$ is exactly the image of $\overline{Q}_G \cap u^{-1}Q_H^0u$ in $S_G$, then any weight $\lambda \in X^\bullet_+(S_G)$ such that $(V^G_\lambda)^{Q_H^0} \ne 0$ must lie in $X^\bullet_+(S_G / S_G^0)$. However, it is convenient not to require $S_G^0$ to have this minimality property.
  \end{remark}

  \begin{definition}
   We shall say that a weight $\lambda \in X^\bullet(S_G / S_G^0)$ is \emph{$Q_H^0$-admissible} if $(V_\lambda^G)^{Q_H^0} \ne 0$.
  \end{definition}

  If this holds, then $Q_H / Q_H^0$ must act on $(V_\lambda^G)^{Q_H^0}$ via a weight $\mu \in X^\bullet(S_H / S^0_H)$, and this weight must be dominant (since it is the highest weight of the  $H$-representation spanned by $(V_\lambda^G)^{Q_H^0}$, which is finite-dimensional). We denote this weight by $\mu(\lambda)$, or just $\mu$ if $\lambda$ is clear from context.

  \begin{remark}
   The natural map
   \[ \frac{Q_H \cap u \overline{Q}_G u^{-1}}{Q_H^0 \cap u \overline{Q}_G u^{-1}} \to Q_H / Q_H^0 \]
   is injective, and for dimension reasons, it must in fact be an isomorphism. The left-hand side maps naturally to $S_G / S_G^0$; so we can characterise $\mu(\lambda)$ as the pullback of $\lambda$ to this quotient.
  \end{remark}

  \begin{proposition}
   \label{prop:branchingK}
   If $\lambda$ is $Q_H^0$-admissible, then there exists a unique homomorphism
   \[ V^H_{\mu} \to \left(V^G_{\lambda}\middle) \right|_H, \]
   where $\mu \in X^\bullet(L_H / L^0_H)$ is the character by which $Q_H$ acts on $\left(V^G_{\lambda}\right)^{Q_H^0}$. This is characterised by mapping the highest-weight vector $f^{\hw}_{\mu}$ of $V^H_{\mu}$ to $\br_{\lambda}$.
  \end{proposition}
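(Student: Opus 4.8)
The plan is to deduce this from a multiplicity statement: I will show that $V^H_\mu$ occurs in the restriction $V^G_\lambda\big|_H$ with multiplicity exactly one, and that the (essentially unique) embedding carries $f^{\hw}_\mu$ to a scalar multiple of $\br_\lambda$. Throughout one works over $K$, which has characteristic zero, so every finite-dimensional algebraic $H$-representation is semisimple and $V^H_\mu$ is the unique irreducible of highest weight $\mu$ (this needs $\mu$ dominant, which was established above). In particular $V^H_\mu$ is generated as an $H$-representation by $f^{\hw}_\mu$, so any $H$-homomorphism out of $V^H_\mu$ is determined by the image of $f^{\hw}_\mu$; this already yields the uniqueness assertion once existence is known. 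For existence it therefore suffices to prove that $\operatorname{Hom}_H\!\left(V^H_\mu, V^G_\lambda\big|_H\right)$ is one-dimensional and that a nonzero element sends $f^{\hw}_\mu$ to a nonzero multiple of $\br_\lambda$, after which rescaling produces the desired map.

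The key step is the multiplicity computation. Recall from the Borel--Weil presentation used to define these representations that $f^{\hw}_\mu$ is fixed under right translation by the unipotent radical $N_H$ of $Q_H$ and is an $L_H$-eigenvector of weight $\mu$; hence, for any finite-dimensional $H$-representation $M$, the multiplicity of $V^H_\mu$ in $M$ equals $\dim\!\left(M^{N_H}\right)_\mu$, the dimension of the $\mu$-weight space of the $N_H$-invariants. Applying this with $M = V^G_\lambda\big|_H$: since $\mu$ lies in $X^\bullet(L_H/L^0_H)$, any vector of $L_H$-weight $\mu$ is automatically fixed by $L^0_H$, hence — being also $N_H$-fixed — by all of $Q_H^0 = L^0_H \ltimes N_H$, so $\left(V^G_\lambda\right)^{N_H}_\mu \subseteq \left(V^G_\lambda\right)^{Q_H^0}$, which has dimension at most one by the previous Proposition. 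Conversely, $\br_\lambda$ spans $\left(V^G_\lambda\right)^{Q_H^0}$, is fixed by $N_H \subseteq Q_H^0$, and $L_H \subseteq Q_H$ acts on it through $\mu$; so $\br_\lambda$ is a nonzero element of $\left(V^G_\lambda\right)^{N_H}_\mu$. Thus this space is exactly one-dimensional, the multiplicity is one, and $\operatorname{Hom}_H\!\left(V^H_\mu, V^G_\lambda\big|_H\right)$ is one-dimensional.

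To conclude, choose any nonzero $\phi_0$ in this Hom-space. By equivariance, $\phi_0(f^{\hw}_\mu)$ is again $N_H$-fixed of $L_H$-weight $\mu$, hence lies in $\left(V^G_\lambda\right)^{N_H}_\mu = K\cdot\br_\lambda$; and it is nonzero, since otherwise $f^{\hw}_\mu \in \ker\phi_0$ would force $\phi_0 = 0$ (as $f^{\hw}_\mu$ generates $V^H_\mu$). Rescaling $\phi_0$ gives the map $\phi$ with $\phi(f^{\hw}_\mu) = \br_\lambda$, and uniqueness is as noted above. I expect the one genuinely delicate point to be the bookkeeping of conventions: verifying that $\br_\lambda$ is a highest-weight vector for the $H$-action in precisely the same sense as $f^{\hw}_\mu$ — fixed by $N_H$ and an $L_H$-eigenvector of weight $\mu$, rather than a lowest-weight vector or a highest-weight vector for an opposite parabolic — since it is this compatibility that lets the multiplicity bound from the previous Proposition be applied verbatim. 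Everything else is routine highest-weight theory in characteristic zero.
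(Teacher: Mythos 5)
Your proposal is correct and, despite the different packaging, is essentially the paper's own argument. The paper invokes Frobenius reciprocity directly: since $V^H_\mu \cong \operatorname{Coind}_{Q_H}^H(\mu)$, one has $\operatorname{Hom}_H\bigl(V^H_\mu, V^G_\lambda|_H\bigr) \cong \operatorname{Hom}_{Q_H}\bigl(\mu, V^G_\lambda|_{Q_H}\bigr)$, and the right-hand side is the space of $Q_H$-eigenvectors of weight $\mu$, which (since $\mu$ is trivial on $Q_H^0$) sits inside the at-most-one-dimensional $\bigl(V^G_\lambda\bigr)^{Q_H^0}$ spanned by $\br_\lambda$. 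Your identity $\operatorname{Hom}_H\bigl(V^H_\mu, M\bigr) \cong \bigl(M^{N_H}\bigr)_\mu$ is precisely this adjunction unwound, with semisimplicity in characteristic zero substituting for explicitly naming the coinduction functor, so the two arguments coincide step for step. One small remark on your flagged worry at the end: the conventions do line up automatically here, because both $V^G_\lambda$ and $V^H_\mu$ are realised by the same Borel--Weil recipe (left semi-invariance under $\overline{N}L$ with a dominant character, $G$ resp. $H$ acting by right translation), and you have already verified directly that $\br_\lambda$ is $N_H$-fixed with $L_H$ acting through $\mu$ — which is exactly the highest-weight condition in that realisation — so no sign or opposite-parabolic issue can arise.
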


  \begin{proof}
   This follows from Frobenius reciprocity: $V^H_{\mu}$ is isomorphic to the co-induced module $\operatorname{Coind}_{Q_H}^H(\mu)$ (which is left adjoint to restriction of representations, while the usual induction is right adjoint).
  \end{proof}

  \begin{proposition}
   If $\lambda$ is a $Q_H^0$-admissible weight, then $\br_{\lambda} \in V^G_{\lambda, \cO}$.
  \end{proposition}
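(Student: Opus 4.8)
The plan is to prove directly that $\br_\lambda$, a priori only an element of $K[G]$, in fact has $\cO$-integral coefficients, i.e.\ lies in $\cO[G]$; since $\br_\lambda\in V^G_\lambda$ and $V^G_{\lambda,\cO}$ is realised via Borel--Weil as $V^G_\lambda\cap\cO[G]$, this gives the claim. (We may and do assume $u\in G(\Zp)$, lifting along the smooth surjection $\pi\colon G\to\cF$ — possible as $\Zp$ is Henselian with finite residue field — noting that the choice of lift only affects $\br_\lambda$ by a unit of $\Zp$, namely $\lambda$ of an element of $\overline{Q}_G(\Zp)$.)

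Next I would make a formal reduction. Since $G_{/\cO}$ is smooth over the DVR $\cO$ with connected fibres, $\cO[G]$ is a normal domain and $\cO[G]/p$ is a domain, so the only height-one prime of $\cO[G]$ containing $p$ is the generic point of $G_\FF$. It follows that for any open subscheme $\cU\subseteq G_{/\cO}$ whose special fibre is dense in $G_\FF$ one has $\cO[G]=K[G]\cap\cO[\cU]$ inside $K[\cU]$: normality expresses $\cO[G]$ as the intersection of its localisations at height-one primes, those not containing $p$ impose no condition on an element of $K[G]$, and at the one containing $p$ we have $\cO[G]_{(p)}=\cO[\cU]_{(\text{generic pt of }\cU_\FF)}$. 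As $\br_\lambda\in V^G_\lambda\subseteq K[G]$, it therefore suffices to produce one such $\cU$ with $\br_\lambda|_\cU\in\cO[\cU]$.

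The natural choice is $\cU=\overline{Q}_G u^{-1}Q_H^0$, the open subscheme of $G_{/\cO}$ underlying hypothesis (A): it is the $\pi$-preimage of the open $Q_H^0$-orbit of $u$ in $\cF$, whose special fibre is a non-empty — hence dense — open subscheme of the irreducible $\cF_\FF$, so $\cU_\FF$ is dense in $G_\FF$. I would then pull $\br_\lambda|_\cU$ back along the orbit map $m\colon\overline{Q}_G\times Q_H^0\to\cU$, $(\overline{q},q')\mapsto\overline{q}u^{-1}q'$: by the defining identity $\br_\lambda(\overline{q}u^{-1}q')=\lambda(\overline{q})$, the pullback $m^\ast(\br_\lambda|_\cU)$ is the composite of $\pr_1$ with the homomorphism $\overline{Q}_G\onto L_G\onto S_G\xrightarrow{\lambda}\mathbb{G}_m$ of $\cO$-group schemes, hence a unit of $\cO[\overline{Q}_G\times Q_H^0]$ — in particular manifestly integral. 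Since $m$ is faithfully flat and all the rings involved are $\cO$-flat, faithfully flat descent of integrality (an element of $K[\cU]$ lies in $\cO[\cU]$ as soon as its image lies in $\cO[\overline{Q}_G\times Q_H^0]$) yields $\br_\lambda|_\cU\in\cO[\cU]$, whence $\br_\lambda\in\cO[G]$ and so $\br_\lambda\in V^G_\lambda\cap\cO[G]=V^G_{\lambda,\cO}$.

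The genuinely substantive point — as opposed to the normality bookkeeping and the descent step, which are routine — is the integral geometry behind (A): that $\cU=\overline{Q}_G u^{-1}Q_H^0$ really is an open subscheme of $G_{/\cO}$ with dense special fibre, and that $m$ is faithfully flat. Over $\QQ$ this is exactly (A), but integrally it amounts to the scheme-theoretic stabiliser of $u^{-1}$ in $\overline{Q}_G\times Q_H^0$ being $\Zp$-flat (equivalently, its special fibre having the expected dimension); I would treat this as part of the integral open-orbit hypotheses of \S\ref{sect:pfsetup} and \cite{loeffler-spherical}.
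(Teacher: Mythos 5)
Your proof is correct and follows essentially the same route as the paper's: both rest on normality of $G_{/\cO}$ (the paper invokes algebraic Hartogs, you phrase it as an intersection of height-one localisations, which is the same thing) together with the observation that the defining formula $\br_\lambda(\overline{q}u^{-1}q')=\lambda(\overline{q})$ makes $\br_\lambda$ manifestly regular on the open orbit $\overline{Q}_G u^{-1}Q_H^0$, whose special fibre is dense. The paper's version is terser and leaves implicit the faithfully-flat-descent step from $\overline{Q}_G\times Q_H^0$ to $\cU$ and the integral flatness of the orbit map, which you correctly flag as the only genuinely substantive input beyond the bookkeeping.
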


  \begin{proof}
   Since $G_{/\cO}$ is smooth over $\cO$, it is a normal scheme, so it suffices to check that $\br_{\lambda}$ is regular away from a subvariety of codimension $\ge 2$ (Hartogs' lemma). By definition, it is regular on the generic fibre, and also on an open dense subvariety of $G_{/k}$, where $k = \cO/p\cO$; so we are done.
  \end{proof}

  \begin{corollary}
   \label{cor:branchingO}
   The branching map of \cref{prop:branchingK} restricts to a map of $\cO$-modules
   \[ V^{H, \min}_{\mu, \cO} \to V^G_{\lambda, \cO}, \]
   where $V^{H, \min}_{\mu, \cO}$ is the minimal admissible lattice in $V^H_\mu$.
  \end{corollary}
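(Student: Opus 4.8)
The plan is to reduce the statement to a purely representation-theoretic fact about the minimal admissible lattice, namely that $V^{H,\min}_{\mu,\cO}$ is the cyclic module generated by the highest-weight vector $f^{\hw}_{\mu}$ under the distribution algebra (hyperalgebra) $\mathrm{Dist}(H_{/\cO})$, and then to push this through the branching map $\phi\colon V^H_{\mu}\to V^G_{\lambda}|_H$ of \cref{prop:branchingK}, which is $H$-equivariant and sends $f^{\hw}_{\mu}$ to $\br_{\lambda}$.

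First I would recall (cf.\ \cite{lin92}) the identification $V^{H,\min}_{\mu,\cO}=\mathrm{Dist}(H_{/\cO})\cdot f^{\hw}_{\mu}$; equivalently, $V^{H,\min}_{\mu,\cO}$ is the image of the Weyl module $\Delta^H(\mu)_{\cO}$ in $V^H_{\mu}$. That $\mathrm{Dist}(H_{/\cO})\cdot f^{\hw}_{\mu}$ is contained in every admissible lattice $\cL$ is immediate, since $\cL$ contains $f^{\hw}_{\mu}$ and is stable under $H_{/\cO}$ (hence under $\mathrm{Dist}(H_{/\cO})$). For the reverse inclusion I would check that $\Lambda\coloneqq\mathrm{Dist}(H_{/\cO})\cdot f^{\hw}_{\mu}$ is itself an admissible lattice: it is a full lattice (it spans $V^H_{\mu}$ over $K$, as $\mathrm{Dist}(H_{/\cO})\otimes_{\cO}K=U(\mathfrak{h}_K)$ and $V^H_{\mu}$ is irreducible, and it is finitely generated because it lies inside the maximal lattice $V^H_{\mu,\cO}$); its intersection with the highest-weight line is $\cO f^{\hw}_{\mu}$, again because $\Lambda\subseteq V^H_{\mu,\cO}$; and it is stable under the group scheme $H_{/\cO}$ because it is stable under $\mathrm{Dist}(H_{/\cO})$.

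It then remains only to observe that $\phi$, being $H$-equivariant over $K$, intertwines the $\mathrm{Dist}(H_{/\cO})$-actions, so that
\[
 \phi\bigl(V^{H,\min}_{\mu,\cO}\bigr)=\phi\bigl(\mathrm{Dist}(H_{/\cO})\cdot f^{\hw}_{\mu}\bigr)=\mathrm{Dist}(H_{/\cO})\cdot\br_{\lambda};
\]
by the previous proposition $\br_{\lambda}\in V^G_{\lambda,\cO}$, and the maximal lattice $V^G_{\lambda,\cO}$ is stable under $G_{/\cO}$, hence under $H_{/\cO}$, hence under $\mathrm{Dist}(H_{/\cO})$, so this image lands in $V^G_{\lambda,\cO}$, as required.

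There is no serious obstacle here: the argument is formal once one has the description of the minimal lattice in the second paragraph, and the only input worth citing carefully is the (classical) fact that a full $\mathrm{Dist}(H_{/\cO})$-stable sublattice of an $H$-representation is automatically stable under the smooth group scheme $H_{/\cO}$ over the complete DVR $\cO$. If one prefers to avoid even this, one can instead invoke the universal property of the Weyl module $\Delta^H(\mu)_{\cO}$: it maps canonically to any $\cO$-lattice generated by a highest-weight vector of weight $\mu$ --- in particular to the sublattice of $V^G_{\lambda,\cO}$ generated by $\br_{\lambda}$ --- and its image in $V^H_{\mu}$ is $V^{H,\min}_{\mu,\cO}$.
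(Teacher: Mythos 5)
Your argument is correct and is essentially the intended one: the paper gives no explicit proof of this corollary, but it follows immediately from the preceding proposition ($\br_\lambda\in V^G_{\lambda,\cO}$) via precisely the mechanism you identify — the minimal admissible lattice is the cyclic $\operatorname{Dist}(H_{/\cO})$-module generated by $f^{\hw}_\mu$, and the $H$-equivariant branching map carries it into the $\operatorname{Dist}(H_{/\cO})$-stable lattice $V^G_{\lambda,\cO}$. Your write-up merely makes explicit the representation-theoretic facts that the paper leaves implicit (citing \cite{lin92}), and is complete and correct.
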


  \begin{remark}
   Compare \cite[Proposition 4.3.5]{LSZ21}. (We do not know whether the minimal lattice maps to the minimal lattice, nor whether the maximal one maps to the maximal one.)
  \end{remark}

 \subsection{Projection to the highest-weight subspace}

  If $\lambda$ is an $Q_H^0$-admissible weight, we thus have two canonical vectors in $V^G_{\lambda}$: namely, the highest-weight vector $f_\lambda^{\hw}$, and the $Q_H^0$-invariant vector $\br_{\lambda}$. We also have a canonical linear functional $\psi_\lambda$ on $V_\lambda$, given by evaluation at $\mathrm{id}_G$, which factors through projection from $V_\lambda$ to its highest-weight space (i.e.~it is a lowest-weight vector of weight $-\lambda$ for the contragredient representation $(V_\lambda^G)^\vee$).

  \begin{proposition}
  \label{trivprop}
   We have $\psi_\lambda(f_\lambda^{\hw}) = \psi_\lambda(u^{-1} \cdot \br_\lambda)$.
  \end{proposition}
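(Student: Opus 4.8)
The plan is to observe that both sides can be read off directly from the normalisations defining $f_\lambda^{\hw}$, $\br_\lambda$ and $\psi_\lambda$, so that the proposition reduces to a short unwinding of definitions. Concretely, I would first recall that $\psi_\lambda$ is \emph{literally} evaluation at $\mathrm{id}_G$, i.e. $\psi_\lambda(f) = f(1)$ for $f \in V^G_\lambda \subseteq K[G]$, and that the $G$-action on $V^G_\lambda$ in force throughout this section is right translation, so $(g_0\cdot f)(g) = f(g g_0)$. After fixing, as elsewhere, a lift of $u \in \cF(\Zp)$ to $G(\Zp)$, this gives $\psi_\lambda(u^{-1}\cdot\br_\lambda) = (u^{-1}\cdot\br_\lambda)(1) = \br_\lambda(u^{-1})$.

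Next I would evaluate each side separately. For the right-hand side: since $1 \in \overline{Q}_G$ and $1 \in Q_H^0$ (both being groups), we may write $u^{-1} = 1\cdot u^{-1}\cdot 1$ and apply the defining property $\br_\lambda(\overline q\, u^{-1} q') = \lambda(\overline q)$ with $\overline q = q' = 1$, obtaining $\br_\lambda(u^{-1}) = \lambda(1) = 1$. For the left-hand side: the identity of $G$ lies in the big Bruhat cell $\overline{N} L N$ (as $1 = 1\cdot 1\cdot 1$), so the normalisation $f_\lambda^{\hw}|_{\overline{N}LN}\colon \overline n\ell n \mapsto \lambda(\ell)$ yields $\psi_\lambda(f_\lambda^{\hw}) = f_\lambda^{\hw}(1) = \lambda(1) = 1$. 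Hence both sides equal $1$, which is the claim.

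There is no real obstacle here; the only points deserving a moment's care are bookkeeping the right-translation convention correctly (a left-translation convention would have $u$ rather than $u^{-1}$ in the statement, which is precisely why it is written with $u^{-1}$), recalling that the evaluation $\br_\lambda(u^{-1})$ is well-defined by virtue of condition (B) of \S\ref{sect:pfsetup}, which was already used to construct $\br_\lambda$, and recording the implicit lift of $u$ from $\cF$ to $G$.
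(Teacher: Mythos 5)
Your proof is correct and is precisely the paper's own argument (the paper simply states ``both sides are, by definition, equal to $1$''); you have just written out the definitional unwinding that the paper leaves implicit. The bookkeeping remarks about the right-translation convention and the lift of $u$ to $G$ are accurate and harmless additions.
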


  \begin{proof}
   Both sides are, by definition, equal to 1.
  \end{proof}

  \begin{corollary}\label{cor:integralrelation}
   Let $\eta \in \Sigma^{++}$. If $\lambda$ is $Q_H^0$-admissible, then for any $r \ge 1$ we have
    \[
     p^{hr} \rho_{\lambda}(\eta(p)^{-r}) \cdot (f_\lambda^{\hw} - u^{-1} \cdot \br_\lambda) \in p^r V_{\lambda, \cO},
    \]
   where $h = \langle \lambda, \eta\rangle$ as above.
  \end{corollary}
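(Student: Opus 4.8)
The plan is to exploit the weight-space decomposition of $V_{\lambda,\cO}$ for the action of the relative torus $A$, exactly as in the proof of \cref{prop:comparelattices}. By \cref{trivprop}, the two vectors $f_\lambda^{\hw}$ and $u^{-1}\cdot\br_\lambda$ have the same image under $\psi_\lambda$, which is the projection onto the highest relative weight space (composed with the chosen trivialisation). Hence their difference $f_\lambda^{\hw} - u^{-1}\cdot\br_\lambda$ lies in the sum of the \emph{strictly lower} relative weight spaces of $V_\lambda$. (One should be slightly careful here: $u^{-1}\cdot\br_\lambda$ need not be a sum of weight vectors individually, but $V_{\lambda,\cO}$ is the direct sum of its intersections with the weight spaces by fact (b) of the standard facts on admissible lattices, and the difference has zero component in the top piece.)

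Next I would recall from \cref{lemma:preservelattice} that for $\eta\in\Sigma^{++}$ the operator $p^{\langle\eta,\lambda\rangle}\rho_\lambda(\eta(p)^{-1}) = p^h\rho_\lambda(\eta(p)^{-1})$ preserves $V_{\lambda,\cO}$ and acts as a \emph{strictly positive} power of $p$ on every weight space other than the highest. Concretely, on the weight-$\nu$ space it acts by $p^{\langle\eta,\lambda-\nu\rangle}$, and $\langle\eta,\lambda-\nu\rangle\ge 1$ whenever $\nu\neq\lambda$ since $\eta$ is strictly dominant and $\lambda-\nu$ is a nonzero sum of positive relative roots. Iterating $r$ times, $p^{hr}\rho_\lambda(\eta(p)^{-r})$ acts on the weight-$\nu$ space by $p^{r\langle\eta,\lambda-\nu\rangle}$, which lies in $p^r\ZZ_p$ for every $\nu\neq\lambda$.

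Combining the two observations: I would first check $\br_\lambda\in V^G_{\lambda,\cO}$, which is exactly the content of the Proposition immediately preceding \cref{cor:branchingO}, and hence $u^{-1}\cdot\br_\lambda\in V_{\lambda,\cO}$ since $u\in\cF(\Zp)$ lifts (after the big-cell description) to an element acting integrally — more simply, $V_{\lambda,\cO}=\cL^{\max}$ is $G(\cO)$-stable and we may take $u$ to be represented by an element of $G(\Zp)$. Thus $f_\lambda^{\hw}-u^{-1}\cdot\br_\lambda\in V_{\lambda,\cO}$, and by the first paragraph its image in $V_{\lambda,\cO}/(\text{top weight space})$ is supported on weights $\nu\neq\lambda$. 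Applying $p^{hr}\rho_\lambda(\eta(p)^{-r})$ and using the second paragraph, each weight component is multiplied into $p^r V_{\lambda,\cO}$, giving the claimed containment in $p^r V_{\lambda,\cO}$.

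The only genuine subtlety — the ``main obstacle'' — is making precise that $u^{-1}\cdot\br_\lambda$, though not itself a weight vector, contributes nothing to the top weight space after subtracting $f_\lambda^{\hw}$; this is handled cleanly by decomposing $f_\lambda^{\hw}-u^{-1}\cdot\br_\lambda$ along the weight spaces of $V_{\lambda,\cO}$ (fact (b)), noting the top component of $f_\lambda^{\hw}$ is $f_\lambda^{\hw}$ itself while, by \cref{trivprop}, the top component of $u^{-1}\cdot\br_\lambda$ is also $f_\lambda^{\hw}$ (both pair to $1$ against the lowest-weight functional $\psi_\lambda$ and the top weight space is one-dimensional), so the top components cancel. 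Everything else is the ``easy explicit check'' already invoked in \cref{lemma:preservelattice}.
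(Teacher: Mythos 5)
Your proposal is correct and follows essentially the same route as the paper: decompose $f_\lambda^{\hw} - u^{-1}\cdot\br_\lambda$ along the relative weight spaces for $A$, note via \cref{trivprop} that the top component cancels, and then use that $p^h\rho_\lambda(\eta(p)^{-1})$ acts by strictly positive powers of $p$ on every lower relative weight space (since $\eta\in\Sigma^{++}$), so that $r$-fold iteration lands in $p^r V_{\lambda,\cO}$. You are slightly more explicit than the paper on a couple of minor points — invoking \cref{lemma:preservelattice} by name rather than reproving it inline, and spelling out that $u$ lifts to $G(\Zp)$ so $u^{-1}\cdot\br_\lambda\in V_{\lambda,\cO}$ — but the underlying argument is the same.
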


  \begin{proof}
   Since $V_{\lambda, \cO}$ is stable under the action of $G$, it is certainly stable under the action of $ \mathbf{G}_{m, /\Zp}$ defined by $\eta$, and hence is equal to the direct sum of its eigenspaces for the powers of $\eta$. So to prove that $p^h \rho_{\lambda}(\eta(p)^{-1})$ maps $V_{\lambda, \cO}$ to itself, it suffices to show that the eigenvalues of $\rho_{\lambda}(\eta(p)^{-1})$ on $V_\lambda$ are in $p^{-h} \Zp$. Clearly, these eigenvalues are given by $p^{-\langle \eta, \mu\rangle}$ where $\mu$ varies over the weights $V_\lambda$. However, since $\lambda$ is the highest weight of $V_\lambda$, and $\eta$ is dominant, all such weights satisfy $\langle \eta, \mu \rangle \le \langle \eta, \lambda\rangle = h$.

   To prove the second statement, it suffices to take $r = 1$. By \cref{trivprop}, $f_\lambda^{\hw} - u^{-1} \cdot \br_\lambda$ has zero projection to the highest relative weight space. So it is a sum of vectors lying in relative weight spaces of strictly smaller weights, and hence $p^h \rho_{\lambda}(\eta(p)^{-1})$ acts on these with eigenvalues in $p\Zp$.
  \end{proof}

\section{Norm-compatible families}

 \subsection{Subgroups}

  Let us choose $\eta \in \Sigma^{++}$ (which will remain fixed throughout the construction), and set $\tau = \eta(p)$. For $r \ge 0$, let us write
  \[ N_r = \tau^r N_G(\Zp) \tau^{-r},\qquad \overline{N}_r = \tau^{-r} \overline{N}_G(\Zp) \tau^r.\]
  Note that $N_r$ and $\overline{N}_r$ are contained (usually strictly) in the kernel of reduction modulo $p^r$. We set $L_r = \{ \ell \in L_G(\Zp): \ell \bmod{p^r} \in L_G^0\}$. Then the ``norm-compatibility machine'' of \cite{loeffler-spherical} uses the open compact subgroups $U_r$ and $V_r$ of $G(\Zp)$ (see \S 4.4 of \emph{op.cit.}) defined for $r \ge 1$ by
  \[ U_r = \overline{N}_0 L_r N_r,
  \qquad V_r = \overline{N}_r L_r N_0.\]
  Note that $(U_r)_{r \ge 1}$ is a decreasing sequence of open neighbourhoods of $\overline{Q}{}_G^0(\Zp)$, and $(V_r)_{r \ge 1}$ similarly of $Q^0_G(\Zp)$.% We also consider the subgroup $V_r' = \overline{N}_r L_r N_1$. Evidently $U_\infty = \overline{Q}_G(\Zp)$ and $V_\infty = Q_G(\Zp)$.

 \subsection{Hecke actions}

  For $r \ge 1$, the groups $U_r$ and $V_r$ have Iwahori decompositions, so we have an integrally normalised Hecke operator $\cT'_{\eta}$ on cohomology at these levels, defined in weight $\lambda$ using the normalisation factor $p^h$ where $h = \langle \eta, \lambda \rangle$. Similarly, we obtain an integrally normalised pushforward map
  \[ [\tau^r]_*: H^i(U_r, V_{\lambda, \cO}) \to H^i(V_r, V_{\lambda, \cO}), \]
  whose effect on the coefficient sheaves is given by $p^{hr} \rho_{\lambda}(\tau^{-r})$. Note that this map commutes with the actions of $\cT'_{\eta}$.

 \subsection{Pushforward from \texorpdfstring{$H$}{H}}

  \begin{definition}
   Let $c$ be the integer
   \[  \dim_{\RR}(Y_{\cG}) - \dim_{\RR}(Y_{\cH}) - \operatorname{rk}_{\RR}\left(\tfrac{Z_H}{Z_G \cap H}\right)  \ge 0,\]
   where $\operatorname{rk}_{\RR}$ denotes the split rank over $\RR$. (Note that this is simply $2\left( \dim_{\CC}(Y_{\cG}) - \dim_{\CC} Y_{\cH}\right)$, if $\cH$ and $\cG$ admit compatible Shimura data.)
  \end{definition}

  For any open compacts $U_G \subseteq G(\Zp)$ and $U_H \subset H(\Zp)$ with $U_H \subset H \cap U_G$, and any $H$-admissible weight $\lambda$, the maps of \cref{cor:branchingO} give us morphisms
  \[
   \iota_{\star}^{[\lambda]}: H^i\left(U_H, V^{H, \min}_{\mu, \cO} \right) \to H^{i+c}\left(U_G, V^G_{\lambda, \cO}\right).
  \]
  (As usual, we work with some fixed tame levels $K_{\cG}^p$ and $K_{\cH}^p$ away from $p$; we shall suppose $K_{\cH}^p = K_\cG^p \cap \cH(\Af^p)$.)

  \begin{remark}
   In the Shimura-variety setting, $c = 2d$ is always even, and the pushforward map for \'etale cohomology takes the form
   \[ H^i_{\et}\left(U_H, V^{H, \min}_{\mu, \cO}(j)\middle) \to H^{i+2d}_{\et}\middle(U, V^G_{\lambda, \cO}(j+d)\right) \]
   for any $i, j$, where $(j)$ denotes a cyclotomic twist.
  \end{remark}

  Now let $\lambda$ be a $Q_H^0$-admissible weight, and $\mu$ the corresponding weight for $H$. We suppose we are given a compatible family of classes (for some given $i \ge 0$),
  \[ z_H^{[\mu]} \in
   H^i_{\Iw}\left(Q_H^0, V_{\mu, \cO}^{H, \min} \right).
  \]

  \begin{remark}
   We remind the reader that the special case when $Q_H^0 = H$, $\mu = 0$, and $z_H^{[\mu]}$ is the identity class in $H^0(H(\Zp), \cO)$ is permitted (and not at all trivial).
  \end{remark}

  \begin{definition}
  \label{def:xi}
   We let $z_{G, r}^{[\lambda]} \in H^{i+c}\left(U_r, V_{\lambda, \cO}^G\right)$ be the following class: it is the image of $z_H^{[\mu]}$ under the composite map
   \begin{align*}
    H^i_{\Iw}\left(Q_H^0, V_{\mu, \cO}^{H, \min} \right)
    &\xrightarrow{\pr^\star} H^i_{\Iw}\left(Q_H^0 \cap u U_r u^{-1},
     V_{\mu, \cO}^{H, \min} \right) \\
    &\xrightarrow{\iota_{\star}^{[\lambda]}} H^{i+c}\left(u U_r u^{-1},V_{\lambda, \cO}^G\right) \\
    &\xrightarrow{[u]_*} H^{i+c}\left(U_r, V_{\lambda, \cO}^G\right).
   \end{align*}
   We set
   \[ \xi_{G, r}^{[\lambda]} = [\tau^r]_*(z_{G, r}^{[\lambda]}) \in H^{i+c}\left(V_r, V_{\lambda, \cO}^G\right).\]
   (Here $\pr^\star$ is the natural pullback map, and $[\tau]_*$ the integrally normalised pushforward as above.)
  \end{definition}

  \begin{theorem}
   If $\pr_{V_{r+1}, V_r, \star}$ denotes pushforward along the map $Y_{\cG}(V_{r+1}) \to Y_{\cG}(V_r)$, then we have
   \[ \pr_{V_{r+1}, V_r, \star}\left(\xi^{[\lambda]}_{G, r+1} \right)= \cT_\eta' \cdot \xi^{[\lambda]}_{G, r},\]
   for all $r \ge 1$.
  \end{theorem}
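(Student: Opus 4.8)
The plan is to reduce the norm-compatibility relation to a calculation on the flag variety, using the fact that the relevant pushforward and Hecke operators all arise from explicit double cosets. First I would unwind the definition of $\xi^{[\lambda]}_{G,r}$: it is $[\tau^r]_*$ applied to $z^{[\lambda]}_{G,r}$, which in turn is the pushforward along $[u]_*$ of the image under $\iota_\star^{[\lambda]}$ of a fixed class $z^{[\mu]}_H$ restricted to level $Q_H^0 \cap u U_r u^{-1}$. Since $z^{[\mu]}_H$ lives in Iwasawa cohomology at infinite level over $Q_H^0$, its restrictions to the varying levels $Q_H^0 \cap u U_{r+1} u^{-1} \subseteq Q_H^0 \cap u U_r u^{-1}$ are automatically compatible under corestriction; so the content of the theorem is entirely about what happens on the $G$-side, after applying $\iota_\star^{[\lambda]}$, $[u]_*$ and $[\tau^r]_*$. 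The strategy is therefore to verify the identity of operators
\[
 \pr_{V_{r+1}, V_r, \star} \circ [\tau^{r+1}]_* \circ [u]_* \circ \iota_\star^{[\lambda]} \circ \pr^\star
 \;=\;
 \cT'_\eta \circ [\tau^r]_* \circ [u]_* \circ \iota_\star^{[\lambda]} \circ \pr^\star
\]
as maps $H^i_{\Iw}(Q_H^0, \cV^{H,\min}_{\mu,\cO}) \to H^{i+c}(V_r, \cV^G_{\lambda,\cO})$.

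The key step is a double-coset bookkeeping argument. I would first establish the \emph{uptwisted} (i.e. $p$-unnormalised) version of the identity, working with a single algebraic coefficient system $V_\lambda$ over $K$ where $G(\Qp)$ genuinely acts, so that all the maps in sight are honest transfer/restriction operators between group cohomology of the arithmetic subgroups. In that setting the relation
\[
 \pr_{V_{r+1}, V_r, \star}\left(\xi^{[\lambda]}_{G,r+1}\right) = \cT_\eta \cdot \xi^{[\lambda]}_{G,r}
\]
is exactly the main theorem of \cite{loeffler-spherical}, whose proof rests on the decomposition of the double coset $V_r \tau^{-1} V_r$ into right cosets and matching these against the cosets of $V_{r+1}$ in $V_r$, using that $\tau^{-1} V_{r+1} \tau \cap V_r$ relates $U_{r+1}$-level data to $V_r$-level data, together with the open-orbit hypothesis (A) that guarantees the pushforward from $H$ behaves well. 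So strictly speaking I am not reproving that theorem; rather, the task is to check that passing from the unnormalised operators ($\cT_\eta$, $\rho_\lambda(\tau^{-r})$, $[\tau^r]_*$) to the integrally normalised ones ($\cT'_\eta = p^h[U\eta(p^{-1})U]$, $[\tau^r]_*$ with factor $p^{hr}\rho_\lambda(\tau^{-r})$) does not disturb the relation. Here $h = \langle \eta, \lambda\rangle$, and the point is that $\cT'_\eta = p^h \cT_\eta$ while $[\tau^{r+1}]_*$ carries one extra factor of $p^h$ relative to $[\tau^r]_*$; these two factors of $p^h$ cancel on the two sides of the equation, so the normalised identity is formally equivalent to the unnormalised one. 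The only thing to check is that each normalised map individually lands in \emph{integral} cohomology: this is where \cref{lemma:preservelattice} and \cref{cor:integralrelation} enter, ensuring $p^{hr}\rho_\lambda(\tau^{-r})$ preserves $\cV_{\lambda,\cO}^G$ (it acts on weight spaces by non-negative powers of $p$), and \cref{cor:branchingO} ensures $\iota_\star^{[\lambda]}$ is integrally defined.

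The main obstacle, I expect, is the interplay between the \emph{infinite-level} Iwasawa class on the $H$-side and the pullback map $\pr^\star$ to the varying finite levels $Q_H^0 \cap u U_r u^{-1}$. One must check that $\pr^\star$ followed by $\iota_\star^{[\lambda]}$, $[u]_*$ and $[\tau^r]_*$ really does agree, via the double-coset calculation, with the pushforward formula, and that the projection formula (compatibility of $[\tau^r]_*$ with the $\cT'_\eta$-action and with $\pr_{V_{r+1},V_r,\star}$) is available integrally. Concretely I would: (1) recall that $[\tau^r]_*$ commutes with $\cT'_\eta$ (stated in \S4.2), so $\cT'_\eta \cdot [\tau^r]_* z = [\tau^r]_*(\cT'_\eta z)$; (2) rewrite $\pr_{V_{r+1},V_r,\star} \circ [\tau^{r+1}]_* = [\tau^r]_* \circ (\text{something at }U\text{-level})$ by comparing the maps $Y(U_{r+1}) \to Y(U_r)$ and $Y(V_{r+1}) \to Y(V_r)$ under translation by $\tau$; (3) reduce to checking on the $H$-side that the pullback $\pr^\star$ of $z^{[\mu]}_H$ from level $Q_H^0 \cap u U_r u^{-1}$ to level $Q_H^0 \cap u U_{r+1} u^{-1}$, composed with the branching map, produces precisely the trace-one discrepancy that matches $\cT'_\eta$. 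Step (3) is the crux, and it is exactly the content of the open-orbit computation in \cite{loeffler-spherical}; the contribution of this paper is that, because \cref{cor:integralrelation} controls the error term $f^{\hw}_\lambda - u^{-1}\cdot\br_\lambda$ modulo $p^r V_{\lambda,\cO}$, the whole argument goes through with $\cO$-coefficients and with the branching polynomial $\br_\lambda$ in place of the abstractly-defined invariant vector, without ever inverting $p$.
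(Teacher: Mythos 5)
Your proposal correctly identifies the key point: the paper gives no independent proof of this statement but simply imports it as the main theorem of \cite{loeffler-spherical}, and the only new content is verifying that the integral renormalisations are consistent (the $p^h$ in $\cT'_\eta$ and the extra $p^h$ between $[\tau^{r+1}]_*$ and $[\tau^r]_*$ cancel on the two sides, while \cref{lemma:preservelattice} and \cref{cor:branchingO} guarantee everything stays in the integral lattices). That bookkeeping is exactly right and is a useful expansion of what the paper leaves implicit.

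One small misattribution: your closing sentence invokes \cref{cor:integralrelation} (comparing $f^{\hw}_\lambda$ with $u^{-1}\cdot\br_\lambda$ modulo $p^r$) as part of what makes this theorem work integrally. That corollary is not used for the norm-compatibility relation at a fixed weight $\lambda$; it is the key input for the later \emph{twist-compatibility} theorem comparing the moment maps for $H$ and $G$, where one needs to know that $\br_\lambda$ and $f^{\hw}_\lambda$ agree to leading order. For the present theorem, integrality of the pushforward map and of $\cT'_\eta$ (via admissible lattices and \cref{lemma:preservelattice}), together with the integral branching map of \cref{cor:branchingO}, is all that is required; the whole argument is then formally the $p^h$-twist of the earlier paper's identity as you describe.
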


  \begin{proof} This is one of the main technical result of \cite{loeffler-spherical}, Proposition 4.5.2 of \emph{op.cit.}. (Note that the notations $V_n, \tau$ have the same meaning in \emph{op.cit.} as here, while the the $\xi_{G, r}$ of \emph{op.cit.} is our $\xi_{G, r}^{[\lambda]}$, and the Hecke operator $\cT$ is our $\cT'_{\eta}$.)\end{proof}

  \begin{definition}
   We define
   \[
    \xi_{G, \infty}^{[\lambda]} \coloneqq
    \left( (\cT_\eta')^{-r} \eQG \xi_{G, r}^{[\lambda]} \right)_{r \ge 1}
    \in \eQG H^{(i+c)}_{\Iw}\left(V_\infty, V^G_{\lambda, \cO}\right),
   \]
   and we let
   \[
    \iota_{\infty}^{[\lambda]} : H^i_{\Iw}\left(Q_H^0, V_{\mu, \cO}^{H, \min}\right) \longrightarrow
    \eQG H^{(i+c)}_{\Iw}\left(Q_G^0, V^G_{\lambda, \cO}\right)
   \]
   to be the map sending $z_H^{[\mu]}$ to $\xi_{G, \infty}^{[\lambda]}$.
  \end{definition}

  Note that $\lambda$ (and hence $\mu$) are fixed in the above construction. The goal of this paper is to compare the classes $\xi_{G, \infty}^{[\lambda]}$ for different values of $\lambda$.

\section{Moment maps}

 \subsection{Definitions}

  Let $r \ge 1$, and let $\lambda \in X^\bullet_+(S_G / S_G^0)$. (For the moment we do not assume $\lambda$ is $H$-admissible.)

  Since $f_\lambda^{\hw}$ is a highest-weight vector, and its weight is trivial on $L_G^0$, the mod $p^r$ reduction of this vector is invariant under the group $Q_G^0(\ZZ / p^r)$, which is the image of $V_r$ in $G(\ZZ/p^r)$. Thus the reduction of $f_\lambda^{\hw}$ defines a class
  \[ f_{\lambda, r}^{\hw} \in H^0(V_r, V_{\lambda, r}), \]
  and these have the property
  \begin{equation}
   \label{eq:adjunction}
   f_{\lambda, {r+1}}^{\hw} \bmod p^r = \pr_{V_{r+1}, V_r}^{\star}\left(f_{\lambda, r}^{\hw}\right)\quad \in H^0(V_{r+1}, V_{\lambda, r}).
  \end{equation}

  \begin{definition}
   The \emph{moment map} of weight $\lambda$ and level $r$ is the map
   \[ \mom^\lambda_G\ \ :\ \  H^c_{\Iw}\left(Q_G^0, \cO\right)
   \longrightarrow H^c_{\Iw}\left(Q_G^0, V_{\lambda, \cO}\right)
   \]
   defined as follows: if $\underline{x} = (x_s)_{s \ge 1} \in \varprojlim_s H^c\left(V_s, \cO\right)$, then $\mom_G^\lambda(\underline{x})$ is given by
   \[ \left((x_s \bmod p^s) \cup f_{\lambda, s}^{\hw}\right)_{s \ge 1} \in \varprojlim_s H^c\left(V_s, V_{\lambda, s}\right) \cong H^c_{\Iw}\left(Q_G^0, V_{\lambda, \cO}\right).\]
   which is well-defined by \eqref{eq:adjunction}.
  \end{definition}

  By construction this map commutes with the action of the Hecke operator $\cT'_{\eta}$, and hence with the anti-ordinary projector $e'_{Q_G}$. (It is \emph{not} compatible with the action of the abelian group $S_G(\Zp)$.) Moreover, we can make the same definitions with the lattice $V_{\lambda, \cO}$ replaced with $V_{\lambda, \cO}^{\min}$ (and the moment maps for the two lattices are compatible).

 \subsection{Twist-compatibility}

  We now bring the group $H$ back into the picture. Suppose $\lambda \in X^\bullet_+(S_G)$ is $Q_H^0$-admissible, and $\mu = \mu(\lambda)$ as above. We consider the following diagram of maps:
  \[
   \begin{tikzcd}
    H^i_{\Iw}\left(Q_H^0, \cO\right)
    \rar["\iota_{\infty}^{[0]}"] \dar["\mom^\mu_H"]&
   \eQG H^{(i+c)}_{\Iw}\left(Q_G^0, \cO\right)
    \dar["\mom_G^\lambda"]\\
    H^i_{\Iw}\left(Q_H^0, V_{\mu, \cO}^{H, \min}\right)
    \rar["\iota_{\infty}^{[\lambda]}"]&
    \eQG H^{(i+c)}_{\Iw}\left(Q_G^0, V^G_{\lambda, \cO}\right).
   \end{tikzcd}
  \]

  The following is the key technical result of the present paper:

  \begin{theorem}
   The above diagram is commutative.
  \end{theorem}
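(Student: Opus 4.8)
The plan is to reduce the statement to a finite-level identity modulo $p^s$, and then to the purely representation-theoretic relation provided by \cref{cor:integralrelation}. Unwinding the definitions, both composites in the square send $z_H^{[\mu]}$ to an element of $\mathop{e'_{Q_G}} H^{(i+c)}_{\Iw}(Q_G^0, \cV^G_{\lambda, \cO})$, which by the (unprojected) identification $\varprojlim_s H^{i+c}(V_s, \cV_{\lambda, s}) \cong H^{i+c}_{\Iw}(Q_G^0, \cV^G_{\lambda, \cO})$ amounts to a compatible system of classes at each finite level $V_r$ modulo $p^s$. Since everything in sight is built from the pushforward maps $[\tau^r]_*$, the corestriction maps, and cup product with the invariant vectors $f^{\hw}_{\lambda,s}$ and $\br_{\lambda}$, it suffices to prove the commutativity at each finite level $r$ and modulo each $p^s$; the projector $e'_{Q_G}$ is then applied at the end, exploiting that it is a limit of powers of $\cT'_\eta$, which commutes with all the maps involved.

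The first concrete step is to run both composites through \cref{def:xi}. Going around the top-then-right: one first forms $\iota_\infty^{[0]}(z_H^{[\mu]})$, a system $\big((\cT'_\eta)^{-r} e'_{Q_G} \xi^{[0]}_{G,r}\big)_r$ with trivial coefficients, and then applies $\mom^\lambda_G$, i.e.\ cups with $f^{\hw}_{\lambda,s}$ before taking the limit. Going around the left-then-bottom: one first applies $\mom^\mu_H$ to $z_H^{[\mu]}$ — cupping with the $H$-highest-weight vector $f^{\hw}_{\mu,s}$ — and then runs through the pushforward construction with nontrivial coefficients, which uses the branching map $\iota^{[\lambda]}_\star$ sending $f^{\hw}_\mu \mapsto \br_\lambda$ (\cref{prop:branchingK}, \cref{cor:branchingO}). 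So after unwinding, the two classes at level $r$ differ by: on one side, pushforward of $(\text{class}) \cup f^{\hw}_\mu$ along $\iota^{[\lambda]}_\star$ followed by $[u]_*$ and $[\tau^r]_*$; on the other, pushforward of $(\text{class})$ with trivial coefficients followed by $[u]_*$, $[\tau^r]_*$, and then cupping with $f^{\hw}_{\lambda,s}$. Because the branching map carries $f^{\hw}_\mu$ to $\br_\lambda$, the first class is the $[\tau^r]_*[u]_*$-image of a class cupped with $u^{-1}\cdot\br_\lambda$ (after accounting for the conjugation by $u$), whereas the second is the same construction but cupped with $f^{\hw}_\lambda$. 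The discrepancy between the two is therefore entirely governed by the difference vector $f^{\hw}_\lambda - u^{-1}\cdot\br_\lambda$.

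Now invoke \cref{cor:integralrelation}: for $\eta \in \Sigma^{++}$ and $r \ge 1$,
\[
 p^{hr}\rho_\lambda(\eta(p)^{-r})\cdot\big(f^{\hw}_\lambda - u^{-1}\cdot\br_\lambda\big) \in p^r V_{\lambda, \cO}.
\]
The point is that the integrally normalised pushforward $[\tau^r]_* = p^{hr}\rho_\lambda(\tau^{-r})(\cdots)$ applied to a class cupped with $f^{\hw}_\lambda - u^{-1}\br_\lambda$ lands in $p^r$ times the cohomology; hence modulo $p^s$, for $r \ge s$, the two composites agree \emph{before} applying the projector. To handle all $r$ uniformly, one uses the factor $(\cT'_\eta)^{-r}$ in the definition of $\iota_\infty$: the operator $\cT'_\eta$ acts invertibly on the ordinary part, and $(\cT'_\eta)^{-r} = (\cT'_\eta)^{-r}$ absorbs the difference once we note that $\cT'_\eta$ itself is built from $[\tau]_*$ and a trace map, so that $(\cT'_\eta)^{-r}e'_{Q_G}$ applied to the $p^r$-divisible error term still gives something $p^r$-divisible on the ordinary part (as $e'_{Q_G}$ preserves the integral structure). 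Taking the inverse limit over $r$, the error terms die, and the two systems coincide.

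The main obstacle I anticipate is the bookkeeping around the conjugation by $u$ and the precise interaction of the three pushforward maps $\pr^\star$, $\iota^{[\lambda]}_\star$, $[u]_*$, $[\tau^r]_*$ with cup products: one must check that cupping with $f^{\hw}_\mu$ at $H$-level, then pushing forward via the branching map, genuinely equals cupping with $\br_\lambda$ at $G$-level — this is a projection-formula computation that relies on $\br_\lambda$ being the image of $f^{\hw}_\mu$ and on $\br_\lambda$ being $Q_H^0$-invariant (so that it descends to the relevant finite levels), together with \cref{trivprop} to pin down the normalisations. Once this projection-formula compatibility is in place, the vanishing of the error modulo $p^r$ is exactly \cref{cor:integralrelation}, and the limit argument is routine.
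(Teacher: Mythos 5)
Your proposal is correct and follows essentially the same route as the paper's proof: reduce to the finite-level identity $\xi_{G,r}^{[\lambda]} \bmod p^r = (\xi_{G,r}^{[0]} \bmod p^r) \cup f^{\hw}_{\lambda,r}$, observe that the discrepancy is governed by $f^{\hw}_\lambda - u^{-1}\cdot\br_\lambda$, and invoke \cref{cor:integralrelation} to see that the integrally normalised pushforward $[\tau^r]_*$ kills this discrepancy modulo $p^r$. The paper organises the last step slightly more cleanly by factoring $[\tau^r]_*$ through the sheaf endomorphism $A = p^{hr}\rho_\lambda(\tau^{-r})$ and noting that $A \bmod p^r$ is projection to the highest weight space, after which the required identity is verified "modulo $\ker A$" via adjunction; but this is the same underlying computation you describe.
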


  \begin{remark}
   As in \cref{rmk:coset} above, this can be generalised so that instead of comparing trivial coefficients with coefficients in $V^G_{\lambda, \cO}$, we compare coefficients $V_{\nu, \cO}$ and $V_{\lambda + \nu, \cO}$, where $\nu$ is not necessarily trivial on $L^{\mathrm{der}}$. We leave the details to the interested reader.
  \end{remark}

  \begin{proof}
   From the definition of the moment maps, it suffices to prove the following claim: suppose we define $z_H^{[\mu]} = \mom_H^{\mu}(z_H^{[0]})$ and apply the constructions above in weight 0 and in weight $\lambda$. Then for each $r \ge 1$ we have the following equality of elements of $H^c(V_r, V_{\lambda, r})$:
   \[ \xi_{G, r}^{[\lambda]} \bmod p^r = (\xi_{G, r}^{[0]} \bmod p^r) \cup f_{\lambda, r}^{\hw}. \]

   Recall that $V_r = \tau^{-r} U_r \tau^r$. We thus have an isomorphism $\tau^r: Y_G(U_r) \to Y_G(V_r)$, and our (integrally normalised) pushforward map $H^c(Y_G(U_r), V_{\lambda, r}) \to H^c(Y_G(V_r), V_{\lambda, r})$ can be written more explicitly as the composite
   \[ H^c(Y_G(U_r), V_{\lambda, r}) \longrightarrow H^c(Y_G(U_r), (\tau^r)^* V_{\lambda, r}) \xrightarrow[\cong]{\ (\tau^r)_\star\ } H^c(Y_G(V_r), V_{\lambda, r}), \]
   where the first map is given by the morphism of sheaves $V_{\lambda, r} \to (\tau^r)^* V_{\lambda, r}$ corresponding to the endomorphism $A = p^{hr} \rho_{\lambda}(\tau^{-r})$ of $V_{\lambda, \cO}$.

   Since $V_{\lambda, \cO}$ is an admissible lattice, it is the direct sum of its intersections with the weight spaces of $V_{\lambda}$ for the action of $S$. On the highest-weight space, $A$ acts as the identity, and on all other weight spaces, it acts as multiplication by $p^{nr}$ for some $n \ge 1$. Hence, modulo $p^r$, the map $A$ factors through projection to the highest weight space.

   Since $A$ acts trivially on the highest-weight space, we have $A(f^{\hw}_{\lambda, r}) = (\tau^r)^{\star}(f^{\hw}_{\lambda, r})$. So it suffices to prove that
   \[ z_{G, r}^{[\lambda]} \bmod p^r = \Big( (z_{G, r}^{[0]} \bmod p^r) \cup f_{\lambda, r}^{\hw}\Big) \pmod{\ker A}, \]
   as elements of $H^{i+c}(U_r, V_{\lambda, r})$. Via adjunction between pushforward and pullback, we are reduced to showing that the classes $u^{-1} f_{\lambda, r}^H$ and $f_{\lambda, r}^{\hw}$, viewed as vectors in $V_{\lambda, r}$ invariant under $u^{-1} H u \cap U_r$, agree modulo $\ker(A)$; and this is precisely the statement of \cref{cor:integralrelation}.
  \end{proof}

  Applying the maps in the diagram to the class $z_H^{[0]}$, and defining $z_H^{[\mu]}$ using the moment maps as above, we have proved the following:

  \begin{theorem}
   \label{mainthm}
   Let $\xi_{G, \infty} \coloneqq \xi_{G, \infty}^{[0]} \in \eQG H^{(i+c)}_{\Iw}\left(Q_G^0, \cO\right)$. Then, for all $Q_H^0$-admissible $\lambda$ and all $r \ge 1$, we have
   \[ \mom_{G, V_r}^{\lambda}\left(\xi_{G, \infty} \right) = (\cT_\eta')^{-r} e'_{Q_G} \cdot \xi_{G, r}^{\lambda}.\]
  \end{theorem}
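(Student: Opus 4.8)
The plan is to derive \cref{mainthm} formally from the preceding theorem --- the commutativity of the twist-compatibility square --- by chasing the fixed class $z_H^{[0]}$ through it. First I would recall from \cref{def:xi} (and the definition of $\iota_\infty^{[0]}$ immediately following it) that, by construction, $\xi_{G, \infty} = \xi_{G, \infty}^{[0]} = \iota_\infty^{[0]}\bigl(z_H^{[0]}\bigr)$; so it is precisely the image of $z_H^{[0]}$ under the top arrow of the square.

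Next I would apply the right-hand vertical arrow $\mom_G^\lambda$ to $\xi_{G, \infty}$. Commutativity of the square says that this equals the image of $z_H^{[0]}$ under the other composite, namely $\iota_\infty^{[\lambda]}\bigl(\mom_H^\mu(z_H^{[0]})\bigr)$. Since the whole construction is run with $z_H^{[\mu]} \coloneqq \mom_H^\mu\bigl(z_H^{[0]}\bigr)$, and $\iota_\infty^{[\lambda]}$ is by definition the map carrying $z_H^{[\mu]}$ to $\xi_{G, \infty}^{[\lambda]}$, I obtain the clean Iwasawa-level identity
\[ \mom_G^\lambda\bigl(\xi_{G, \infty}\bigr) = \xi_{G, \infty}^{[\lambda]} \quad\text{in}\quad \mathop{e'_{Q_G}} H^{i+c}_{\Iw}\bigl(Q_G^0, \cV^G_{\lambda, \cO}\bigr). \]

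The remaining step is to read this identity off at each finite level $V_r$. On the right, \cref{def:xi} presents $\xi_{G, \infty}^{[\lambda]}$ as the element of the inverse limit whose $V_r$-component is exactly $(\cT_\eta')^{-r} e'_{Q_G}\cdot\xi_{G, r}^{[\lambda]}$. On the left, I would use the explicit inverse-limit formula for the moment map, $\underline{x} \mapsto \varprojlim_s\bigl(x_s \cup f_{\lambda, s}^{\hw} \bmod p^s\bigr)$, together with its identification with the edge map of the control-theorem spectral sequence (\cref{thm:rockwood}, \cref{def:mommapfinite}), to see that the $V_r$-component of $\mom_G^\lambda(\xi_{G, \infty})$ is $\mom_{G, V_r}^{[\lambda]}(\xi_{G, \infty})$. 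Equating the two gives exactly $\mom_{G, V_r}^{[\lambda]}(\xi_{G, \infty}) = (\cT_\eta')^{-r} e'_{Q_G}\cdot\xi_{G, r}^{[\lambda]}$ for all $r \ge 1$.

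I expect no serious obstacle here: all the geometry and the integrality input have already been absorbed into the preceding theorem (which itself rests on \cref{cor:integralrelation}, i.e.\ on the vanishing of the top relative weight component of $f_\lambda^{\hw} - u^{-1}\cdot\br_\lambda$). The only thing needing genuine care is the bookkeeping in the last step --- matching the moment map of the ``Moment maps'' section with the finite-level edge map from the control theorem, and checking this matching is compatible with the ordinary idempotent $e'_{Q_G}$ and with the $(\cT_\eta')^{-r}$ normalisation built into $\xi_{G, \infty}^{[\lambda]}$. This follows from the $\cT_\eta'$-equivariance of every map in play, but it is the point I would write out most carefully.
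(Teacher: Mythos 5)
Your proposal is correct and follows exactly the paper's own (very terse) proof: the paper derives \cref{mainthm} by applying the maps of the twist-compatibility square to $z_H^{[0]}$, defining $z_H^{[\mu]}$ via the moment map, and then reading off the finite-level components of the resulting identity $\mom_G^\lambda(\xi_{G,\infty}) = \xi_{G,\infty}^{[\lambda]}$. The only difference is that you spell out the bookkeeping (the identification of $\mom^\lambda_{G,V_r}$ with the $V_r$-component of $\mom_G^\lambda$, and the $\cT'_\eta$-equivariance needed to match normalisations) which the paper leaves implicit.
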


  (We shall give a formula in the next section for the moments of $\xi_{G, \infty}$ at parahoric level, which may be more convenient in applications than the level groups $V_r$ used above.)

  \begin{remark}
   Thus the single Iwasawa-cohomology class $\xi_{G, \infty}$ ``knows'' the classes $\xi_{G, r}^{[\lambda]}$ (or at least their projections to the $Q_G$-anti-ordinary part) for all $r$ and all $\lambda$. As $\xi_{G, \infty}$ lies in the module $\eQG H^{(i+c)}_{\Iw}\left(Q_G^0, \cO\right)$, which is finitely-generated over $\cO[[\fS]]$ by \cref{thm:rockwood}, this forces very strong compatibilities between the $\xi_{G, r}^{[\lambda]}$ for varying $\lambda$, which is the crucial input needed to prove explicit reciprocity laws.
  \end{remark}

  For the next corollary, we shall suppose that $H^*(-)$ denotes \'etale cohomology of the canonical model, and moreover that $G$ and $H$ are of PEL type, so we have a formalism of \emph{motivic cohomology with coefficients} (see e.g.~\S 3.1 of \cite{leiloefflerzerbes18}). In this setting, we shall say that a class in $H^*_{\et}(Y_G(U), V_{\lambda, \cO})$, is \emph{motivic} if, after extending scalars to the fraction field $K$ of $\cO$, it lies in the image of the natural map from motivic cohomology to \'etale cohomology (and similarly for $Y_H$ in place of $Y_G$).

  \begin{corollary}
   Suppose that for all $r \ge 1$ and all $Q_H^0$-admissible $\lambda$, the class $z_{H, r}^{[\mu(\lambda)]}$  is motivic. Then $\mom_{G, V_r}^{\lambda}\left(\xi_{G, \infty} \right)$ is motivic for all such $r$ and $\lambda$.
  \end{corollary}

   Note that the motivicity assumption on the classes $z_{H, r}^{[\mu(\lambda)]}$ is automatic in the ``algebraic cycle'' setting, i.e.~when $Q_H^0 = H$ (which implies $\mu(\lambda) = 0$ for all $\lambda$) and $z_{H, r}^{[0]}$ is the identity class in $H^0$, since the identity class in \'etale cohomology is vacuously motivic (it is the image of the identity class in motivic cohomology). A more subtle example where the motivicity condition also holds is when $z_{H, r}^{[\mu(\lambda)]}$ is the $\GL_2$ Eisenstein class, by the main result of \cite{Kings-Eisenstein}.

  \begin{remark}\label{rmk:knowncases}
   Special cases of this computation appear in many of our previous works, such as:
   \begin{itemize}

    \item Theorem 6.3.4 of \cite{KLZ17} is the case $\cG = \GL_2 \times \GL_2$, $\cH = \GL_2$, with $Q_H$ and $Q_G$ taken to be Borel subgroups.

    \item The case $\cG = \operatorname{Res}_{F / \QQ} \GL_2$ and $\cH= \GL_2$, with $F$ a quadratic field, is considered in \cite[Theorem 8.2.3]{leiloefflerzerbes18} (for $F$ real quadratic, using \'etale cohomology) and \cite[Proposition 5.6]{loefflerwilliams18} (for $F$ imaginary quadratic, using Betti cohomology). Again, we take $Q_H$ and $Q_G$ here to be Borel subgroups.

    \item Theorem 9.6.4 of \cite{LSZ21} is an analogous result for $\cG = \GSp_4$ and $\cH = \GL_2\times_{\GL_1} \GL_2$, taking $Q_G$ to be the Siegel parabolic subgroup. In this case we did not assume $L_G / L_G^0$ to be commutative.

    \item Theorem 11.2.1 of \cite{LSZ-unitary} is the case $G = \operatorname{GU}(2, 1)$, $H = \GL_2 \times_{\GL_1} \operatorname{Res}_{E/\QQ} \GL_1$, for $E$ an imaginary quadratic field.
   \end{itemize}

   In some of these works there are various correction terms appearing (such as the factor $(-2)^{-q}$ which appears in several places in \cite{LSZ21}) which are, in effect, a consequence of using the ``wrong'' normalisations for the branching maps. In the present work, we have avoided these correction terms, by normalising the branching maps for algebraic representations using the same element $u$ used to define the pushforward maps $\iota_{\infty}^{[0]}$.

   We briefly mention some related works by other authors in the literature. A result roughly equivalent to the above theorem in the case of Gross--Kudla--Schoen diagonal cycles (associated to $G = (\GL_2)^3$, $H = \GL_2$ embedded diagonally) has recently been proved in \cite{DRfamilies} and independently in \cite{BSV1}. The methods of these works (in particular the former) appear to be closely related to the present work. The case of $G = \GL_2$ and $H$ a non-split torus (the ``Heegner point'' setting) has also been explored, initially by Castella \cite{castella13} using very different methods based on $p$-adic $L$-functions, and in a more general setting by Disegni \cite{disegni-univGZ} using methods rather closer to those of this paper.
  \end{remark}

\section{Interpolating properties}

 \subsection{Parahoric-level classes}

  It will be convenient to work with a slightly different family of subgroups. Let $J_G = \{ g \in G(\Zp): g \bmod p \in Q_G\}$ and $\bar{J}_G = \{ g \in G(\Zp): g \bmod p \in \bar{Q}_G\}$. On the other hand, let $J_H^0 = \{ h: h \bmod p \in Q_H^0\}$. Then the quotient $J_H^0 \backslash G(\Zp) / \bar{J}_G$ is identified with the $Q_H^0(\FF_p)$-orbits on $\cF(\FF_p)$, and the double coset of $u$, for $u$ as in \S 3.1 above, represents the unique open orbit. Thus we have a pushforward map
  \[ u_\star \circ \iota_\star: H^i(J_H^0, V^{H, \min}_{\mu, \cO}) \to H^{i+c}(\bar{J}_G,  V^G_{\lambda, \cO}),
  \]
  and this map is independent of the choice of $u$ in its orbit (except via the normalisation of the branching map).

  \begin{proposition}
   The map $\cB: H^*(J_G, V_{\lambda, \cO}) \to H^*(\bar{J}_G, V_{\lambda, \cO})$ given by pullback to level $\bar{J}_G \cap J_G$ composed with the trace map, commutes with the action of the operators $\cT_{\eta}'$, and induces an isomorphism on the ordinary part for these operators.
  \end{proposition}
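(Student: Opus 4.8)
The plan is to show that $\cB$ fits into a commuting square with the known parahoric-to-$V_r$ comparison isomorphisms, and then transport the statement from levels where we have already established control. First I would introduce the intermediate level $\bar J_G \cap J_G$, which is exactly the Iwahori subgroup $I_G$ associated to a Borel contained in $Q_G$; the map $\cB$ is by definition $\operatorname{tr}_{I_G / \bar J_G} \circ \pr^\star_{J_G / I_G}$. The crucial observation is that $\bar J_G \cap J_G = \bar N_0 L(\Zp) N_0$ when $\eta$ is minuscule, and in general contains $V_0' = \bar N_1 L_0 N_0$; both $J_G$ and $\bar J_G$ admit Iwahori decompositions with respect to $Q_G$, so $\cT'_\eta$ acts on cohomology at all three levels, and the pullback and trace maps are $\cT'_\eta$-equivariant (this is standard: $\cT'_\eta$ is defined by a double coset supported in $\bar N L N$-direction, which is compatible with corestriction along a subgroup containing $N_0$). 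So the first step is just bookkeeping: check that $\cB$ commutes with $\cT'_\eta$, hence with $e'_{Q_G}$.

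Next I would prove the isomorphism claim by the same ``$\cT'_\eta$ contracts the lower weight/orbit data'' mechanism used in \cref{prop:comparelattices} and in the proof of \cref{thm:rockwood}. The point is that $J_G / I_G \cong N_G(\FF_p) / (N_G(\FF_p)\cap \bar J_G)$ — more precisely the fibres of $Y_G(I_G) \to Y_G(J_G)$ are indexed by $\bar N_G(\FF_p)$ — and the composite $\operatorname{tr} \circ \pr^\star$ on cohomology is, up to the Hecke normalisation, the operator $\sum_{\bar n \in \bar N_G(\FF_p)} [\bar n]$. On the ordinary part, a high power of $\cT'_{\eta}$ (equivalently $e'_{Q_G}$) conjugates $\bar N_G(\Zp)$ into $\bar N_r$ for $r \gg 0$, which lies in $\bar J_G$, so each coset representative $\bar n$ acts trivially after applying $e'_{Q_G}$; hence $\cB$ becomes multiplication by $|\bar N_G(\FF_p)|$ composed with an isomorphism — but the Hecke normalisation factor $p^h$ and the degree of the covering interact so that the net effect is an isomorphism. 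Concretely, I would argue: the composite $\cB$ in the other order, $\pr^\star_{\bar J_G \cap J_G / \bar J_G, \star} \circ \operatorname{tr}$... — rather, I would show $e'_{Q_G}$ identifies $H^*(J_G, V_{\lambda,\cO})$ and $H^*(\bar J_G, V_{\lambda,\cO})$ \emph{both} with $e'_{Q_G} H^*(I_G, V_{\lambda,\cO})$, by proving each of the two corestriction maps $H^*(I_G, -) \to H^*(J_G,-)$ and $H^*(I_G,-) \to H^*(\bar J_G,-)$ is an isomorphism on $e'_{Q_G}$-parts, via \cref{lem:hida} and its corollary applied to the chains of subgroups $I_G \subset J_G$ and $I_G \subset \bar J_G$ respectively (here one uses that $I_G = \bar J_G \cap J_G = \bar J_G \cap \tau^{-s} J_G \tau^s$-type relations to put these in the form $U^{(s)} \subset U$). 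Composing one isomorphism with the inverse of the other, and checking the resulting map is indeed pullback-then-trace (not some other identification), gives the result.

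The main obstacle I anticipate is the second step's bookkeeping around normalisations: one must verify that the Hida-style isomorphism $e'_{Q_G}H^*(I_G,-) \xrightarrow{\sim} e'_{Q_G} H^*(\bar J_G,-)$ furnished by \cref{lem:hida} really is the corestriction (trace) map with the correct $p$-power normalisation, and likewise for $J_G$, so that the composite is genuinely $\cB = \operatorname{tr} \circ \pr^\star$ and not off by a unit or a power of $p$. This is exactly the kind of subtlety that the paper has been careful about (cf.\ the discussion of ``wrong normalisations'' in \cref{rmk:knowncases}); the resolution is to track the double-coset decomposition $J_G = \bigsqcup_{\bar n} \bar n \, I_G$ explicitly and compare with the coset decomposition defining $\cT'_\eta$, using that $\tau$ normalises $L$ and contracts $\bar N$. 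Once the normalisation is pinned down, the $\cT'_\eta$-equivariance of all maps in sight (step one) immediately upgrades ``isomorphism of modules'' to ``isomorphism compatible with the tower'', completing the proof.
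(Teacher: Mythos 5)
The paper's proof is a one-liner that sidesteps everything you are trying to do: it introduces the transfer operator $\mathcal{A}: H^*(\bar{J}_G, V_{\lambda,\cO}) \to H^*(J_G, V_{\lambda,\cO})$ defined by the double coset $[J_G\,\eta(p)^{-1}\,\bar J_G]$ (with the normalisation of \cref{def:integralHecke}) and checks directly that $\mathcal{A}\cB = \cT'_\eta$ on $H^*(J_G,-)$ and $\cB\mathcal{A} = \cT'_\eta$ on $H^*(\bar J_G,-)$. This single computation yields both claims at once: commutativity ($\cB\cT'_\eta = \cB\mathcal{A}\cB = \cT'_\eta\cB$), and invertibility of $\cB$ on the ordinary part (with inverse $(\cT'_\eta)^{-1}\mathcal{A}$). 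Your approach is genuinely different, and in its stated form it has a gap that is not just a normalisation subtlety.

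The problem is in the step where you ``compose one isomorphism with the inverse of the other.'' If you succeeded in showing that both corestrictions $\operatorname{cores}_{I_G\to J_G}$ and $\operatorname{cores}_{I_G\to\bar J_G}$ (with $I_G = J_G\cap\bar J_G$) are isomorphisms on ordinary parts, the resulting composite would be $\operatorname{cores}_{I_G\to\bar J_G}\circ(\operatorname{cores}_{I_G\to J_G})^{-1}$. But $\cB$ is $\operatorname{cores}_{I_G\to\bar J_G}\circ\operatorname{res}_{J_G\to I_G}$, and $\operatorname{res}_{J_G\to I_G}$ differs from $(\operatorname{cores}_{I_G\to J_G})^{-1}$ by the factor $[J_G:I_G] = |\bar N_G(\FF_p)|$, a strictly positive power of $p$ and hence a non-unit in $\cO$. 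So if $\operatorname{cores}_{I_G\to J_G}$ really were an isomorphism on ordinary parts, $\operatorname{res}_{J_G\to I_G}$ would \emph{not} be, and $\cB$ would fail to be surjective — these two facts cannot both hold. (Indeed, on $H^0$ with trivial coefficients $\cB$ is literally multiplication by $|\bar N_G(\FF_p)|$; the statement is saved only because the ordinary part of $H^0$ vanishes.) In fact the proposed Hida input is not available in the first place: $\tau^{-s}J_G\tau^s \supseteq J_G$ for all $s\ge 0$, so $J_G^{(s)} = J_G$ and \cref{lem:hida} and its corollary say nothing non-trivial about the pair $I_G\subset J_G$; the lemma really only applies on the $\bar J_G$ side. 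You would therefore need a separate argument that $\operatorname{res}_{J_G\to I_G}$ becomes invertible after $e'_{Q_G}$, and this is not supplied by the Hida corestriction corollary. The paper's device of exhibiting $\mathcal{A}$ with $\mathcal{A}\cB = \cB\mathcal{A} = \cT'_\eta$ is precisely what makes this all unnecessary; the first paragraph of your proposal (commutativity ``by bookkeeping'') is likewise made automatic by that identity rather than needing a direct check.
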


  \begin{proof}
   Let $\mathcal{A}$ be the map $H^*(\bar{J}_G, V_{\lambda, \cO}) \to H^*(J_G, V_{\lambda, \cO})$, given by the double coset $[J_G \eta(p)^{-1} \bar{J}_G]$ (normalised as in \cref{def:integralHecke}). Then one checks that the composite of $\mathcal{A}$ and $\cB$, either way around, is $\cT'_{\eta}$; and the result follows.
  \end{proof}

  For $\lambda$ an $H$-admissible weight, and $\mu$ the corresponding weight of $H$, we have a class
  \[ \xi_{J_G}^{[\lambda]} = \cB^{-1}\left(\eQ \cdot \left(u_\star \circ \iota_\star\right)(z_{J_H^0}^{[\mu]})\right) \in H^{i+c}(J_G, V_{\lambda, \cO}),\]
  where $z_{J_H^0}^{[\mu]}$ is the image of $z_H^{[\mu]}$ at level $J_H^0$, pulled back to $J_H^0 \cap u \bar{J}_G u^{-1}$.

  \begin{proposition}
   For each $Q_H^0$-admissible $\lambda$, we have
   \[ \xi_{J_G}^{[\lambda]} = \mom_{G, J_G}^{[\lambda]}\left(\xi_{G, \infty}\right).\]
  \end{proposition}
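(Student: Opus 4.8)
The plan is to reduce the identity to a comparison between the ``tower'' classes $\xi_{G,r}^{[\lambda]}$ of \cref{def:xi} and the parahoric construction, using \cref{mainthm} to control the left-hand side and the level-change formalism of \cite{loeffler-spherical} for the right-hand side. Both sides plainly lie in $e'_{Q_G}H^{i+c}(J_G,\cV^G_{\lambda,\cO})$. First I would note that $V_r\subseteq J_G$ for every $r\ge1$: indeed $\bar{N}_r\subseteq J_G\cap\bar{N}_G(\Zp)$ because $\eta$ is strictly dominant, while $L_r\subseteq L_G(\Zp)$ and $N_0=N_G(\Zp)$. Since the moment map of \cref{def:mommapfinite} amounts to ``restrict to the given level, then cup with $f_\lambda^{\hw}$'', and cupping with a class pulled back from $H^0$ commutes with pushforward along a change of level (the projection formula), we get $\mom_{G,J_G}^{[\lambda]}(\xi_{G,\infty})=\pr_{V_r,J_G,\star}\big(\mom_{G,V_r}^{[\lambda]}(\xi_{G,\infty})\big)$ for any $r\ge1$; by \cref{mainthm} this equals $\pr_{V_r,J_G,\star}\big((\cT_\eta')^{-r}e'_{Q_G}\xi_{G,r}^{[\lambda]}\big)$.

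Next I would rewrite the right-hand side. By the relation $\cB\circ\mathcal{A}=\cT_\eta'$ established in the preceding proposition, on the ordinary part one has $\cB^{-1}=(\cT_\eta')^{-1}\mathcal{A}$, so that
\[ \xi_{J_G}^{[\lambda]}=(\cT_\eta')^{-1}\,e'_{Q_G}\,\mathcal{A}\Big(e'_{Q_G}\,(u_\star\circ\iota_\star)(z_{J_H^0}^{[\mu]})\Big). \]
I would then invoke the level-change compatibilities of the pushforward construction from \cite{loeffler-spherical}: since $U_r\subseteq\bar{J}_G$ (again because $\eta$ is strictly dominant), the relevant squares of locally symmetric spaces are Cartesian, so the Gysin pushforward, branching map and translation-by-$u$ underlying the definition of $z_{G,r}^{[\lambda]}$ are all compatible with pushforward from level $U_r$ to level $\bar{J}_G$; since moreover $e'_{Q_G}$ discards the contributions of the non-open $Q_H^0(\FF_p)$-orbits on $\cF(\FF_p)$, it follows that $e'_{Q_G}\,(u_\star\circ\iota_\star)(z_{J_H^0}^{[\mu]})=e'_{Q_G}\,\pr_{U_r,\bar{J}_G,\star}(z_{G,r}^{[\lambda]})$ for every $r\ge1$. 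This reduces the proposition to the operator identity
\[ \pr_{V_r,J_G,\star}\circ(\cT_\eta')^{-r}\circ[\tau^r]_\star \;=\; (\cT_\eta')^{-1}\circ\mathcal{A}\circ\pr_{U_r,\bar{J}_G,\star} \]
as maps $e'_{Q_G}H^{i+c}(U_r,\cV^G_{\lambda,\cO})\to e'_{Q_G}H^{i+c}(J_G,\cV^G_{\lambda,\cO})$.

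To prove that identity I would unwind all the maps into pullback--translate--trace composites along $\tau$: the operator $[\tau^r]_\star$ is the isomorphism $\tau^r\colon Y_G(U_r)\xrightarrow{\ \sim\ }Y_G(V_r)$ twisted by the coefficient endomorphism $p^{hr}\rho_\lambda(\tau^{-r})$ (with $h=\langle\eta,\lambda\rangle$), and $\mathcal{A}=[J_G\,\eta(p)^{-1}\,\bar{J}_G]$ and $\cT_\eta'$ decompose analogously; after substituting these, the identity turns into an assertion about the groups $U_r$, $V_r=\tau^{-r}U_r\tau^r$, $J_G$, $\bar{J}_G$ and their $\tau^k$-conjugates, in which the powers of $\cT_\eta'$ cancel because $e'_{Q_G}$ makes every Hecke factor invertible. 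I expect this last step to be the only genuinely delicate point: the hard part will be reconciling the single factor $(\cT_\eta')^{-1}$ produced by $\cB^{-1}$ with the factor $(\cT_\eta')^{-r}$ produced by the norm-compatibility machine, and checking that the intermediate level groups --- and the $\tau$ versus $\tau^{-1}$ translations --- line up correctly; all the geometric content is by then available from \cref{mainthm}, from \cite{loeffler-spherical}, and from the propositions above. A convenient way to organise the computation is to establish the identity first for $r=1$, where both sides carry exactly one factor $(\cT_\eta')^{-1}$, and then propagate it to all $r\ge1$ using the norm-compatibility relation $\pr_{V_{r+1},V_r,\star}(\xi^{[\lambda]}_{G,r+1})=\cT_\eta'\cdot\xi^{[\lambda]}_{G,r}$.
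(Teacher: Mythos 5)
The paper's own ``proof'' is a one-line pointer (``Cartesian diagrams, similar to the norm-compatibility relations; details left to the reader''), so there is no detailed argument in the source against which to compare line by line. Your proposal is a genuine attempt to fill that gap, and your overall reduction is consistent with the indicated strategy: both your argument and the paper's hint come down to Cartesian-square / level-change bookkeeping, with the ordinary idempotent absorbing the failures of Cartesianity. The specific route you take --- invoke \cref{mainthm} at level $V_r$, transport to level $J_G$ via the transition map $\pr_{V_r,J_G,\star}$ (licit because $V_r\subset J_G$ and both levels contain $Q_G^0(\Zp)$ and lie in the inverse system defining $H^*_{\Iw}$), and then match against $\cB^{-1}\bigl(e'_Q(u_\star\circ\iota_\star)(z_{J_H^0}^{[\mu]})\bigr)$ via the relation $\cB\circ\mathcal{A}=\cT'_\eta$ --- is sensible and more explicit than the paper's sketch; it has the pedagogical advantage of making visible exactly which operator identities need to be checked.

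That said, two of your intermediate claims are stated but not actually established, and these are precisely where the work lies. First, the equality
$e'_{Q_G}\,(u_\star\circ\iota_\star)(z_{J_H^0}^{[\mu]})=e'_{Q_G}\,\pr_{U_r,\bar J_G,\star}(z_{G,r}^{[\lambda]})$
is justified by appealing to ``the relevant squares being Cartesian since $U_r\subseteq\bar J_G$'' and to $e'_{Q_G}$ ``discarding non-open orbits.'' The containment $U_r\subseteq\bar J_G$ by itself does not make the square of symmetric spaces Cartesian --- Cartesianity is a double-coset statement comparing $[\bar J_G:U_r]$ with the index of the corresponding $H$-level groups --- and the discrepancy is not really an issue of non-open $Q_H^0(\FF_p)$-orbits (the representative $u$ is fixed throughout, so only the open orbit ever enters); rather it is that the $H$-level $J_H^0\cap u\bar J_G u^{-1}$ is in general strictly larger than $Q_H^0\cap uU_ru^{-1}$, and one must show that after projecting by $e'_{Q_G}$ the extra cosets contribute nothing. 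This is exactly the mechanism behind the norm-compatibility proof in \cite{loeffler-spherical}, so the assertion is plausible, but as written the justification points to the wrong phenomenon. Second, the final operator identity
$\pr_{V_r,J_G,\star}\circ(\cT'_\eta)^{-r}\circ[\tau^r]_\star=(\cT'_\eta)^{-1}\circ\mathcal{A}\circ\pr_{U_r,\bar J_G,\star}$
is proposed rather than proved; you candidly flag this as ``the only genuinely delicate point,'' and your suggestion to verify it for $r=1$ and propagate via the norm relation is the right instinct, but the verification itself is not carried out. In short: your route appears to be the intended one, and the architecture of the reduction is sound, but the two load-bearing assertions above still need to be nailed down by a genuine double-coset computation before the argument is complete.
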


  \begin{proof}
   This follows by an argument using Cartesian diagrams which is very similar to the proof of the norm-compatibility relations; we leave the details to the reader.
  \end{proof}

  There is an analogous map $\cB^*: H^*(\bar{J}_G, V_{\lambda, \cO}^\vee) \to H^*(J_G, V_{\lambda, \cO}^\vee)$ defined by interchanging the roles of $Q_G$ and $\bar{Q}_G$, and one checks that $\cB^*$ is the transpose of $\cB$ with respect to Poincar\'e duality. This commutes with the ``dominant'' Hecke operator $\cT_\eta$ defined by the double coset of $\eta(p)$ (the transpose of $\cT'_{\eta}$). So, if we are given some $v \in H^*_c(J_G, V_{\lambda, \cO}^\vee)$ which is an ordinary eigenvector for $\cT_\eta$, in the cohomological degree complementary to $i + c$, we can define $\bar{v} = (\cB^*)^{-1}(v)$, which is a $\cT$-eigenvector at level $\bar{J}_G$; and the Poincar\'e duality pairing between $\xi^{[\lambda]}_{J_G}$ and $v$ is given by
  \[
   \left\langle \xi^{[\lambda]}_{J_G}, v\right\rangle_{J_G} =
   \left\langle \left(u_\star \circ \iota_\star\right)(z_{J_H}^{[\mu]}), \bar{v}\right\rangle_{\bar{J}_G} = \left\langle z_{J_H^0}^{[\mu]}, \iota^* u^* (\bar{v})\right\rangle_{J_H^0 \cap u\overline{J}_G u^{-1}}.
  \]

  \begin{remark}
   It is natural to expect that the latter expression -- an ``open-orbit cohomological period'' at parahoric level -- should be related to cohomological periods at prime-to-$p$ level, via an appropriate Euler factor. For examples of statements of this kind for specific groups $G$ and $H$, see e.g.~\cite[Theorem 5.7.6]{KLZ17} for $\GL_2 \subset \GL_2 \times \GL_2$, and the results for $\GSp_4$ proved in \cite{loeffler-zeta2} and recalled in the next section. It would be interesting to attempt to find a general recipe for this Euler factor; at least in the special case when $Q_H^0 = H$ and $Q_G$ is a Borel, this may be possible using the results of Sakellaridis, in particular \cite[Theorem 1.3.1]{sakellaridis13}.
  \end{remark}

\section{Applications to \texorpdfstring{$\GSp_4$}{GSp(4)}}

 We now consider applications of the above theory to Euler systems for $\GSp_4$.

 \subsection{A variant of the setting of \tps{\cite{LSZ21}}{[LSZ22]}}

  Let $0 \le q \le a, 0 \le r \le b$ be integers; and let $G = \GSp_4$. We choose a set of primes $S \ni p$, and auxiliary data $K_S, \underline{\phi}_S, W, c_1, c_2$, as in \S 8.4.6 of \cite{LSZ21}; these will remain fixed throughout the following discussion. For simplicity, we also assume that the function $\underline{\phi}_S$ has the form $\phi_{1, S} \otimes \phi_{2, S}$ where each $\phi_{i, S}$ vanishes at $(0, 0)$ and transforms under $\ZZ_S^\times$ by (the adelic character attached to) $\chi_i^{-1}$, where $\chi_i$ is a Dirichlet character unramified outside $S$. This allows us to define classes
  \[ {}_{c_1, c_2} z_{\et}^{[a,b,q,r]} \in H^4_{\et}\left(G(\Zp), \mathscr{D}^{a,b}_{\Zp}(3-q)\right), \]
  which are given by the same construction as \emph{loc.cit.} (with $M = 1$) but taking the test data at $p$ to be the spherical test data, so $\underline{\phi}_p = \operatorname{ch}(\Zp^4)$ and $\xi_p = \operatorname{ch}(G(\Zp))$.

  We now consider the group
  \[ e'_{B_G} H^4_{\et, \Iw}\Big(N_G(\Zp), \Zp(3)\Big), \]
  where $B_G$ is the upper-triangular Borel and $N_G$ its unipotent radical. This is a  finitely-generated module over the Iwasawa algebra of $T_G(\Zp)$, and we can identify this group with $(\Zp^\times)^3$ in such a way that the character $(a,b,q)$ corresponds to the highest weight of $\mathscr{D}^{a,b}(-q)$. Thus the fibre of the Iwasawa cohomology at $(a,b,q)$ maps canonically (via the edge map of \cref{eq:spseq}) to the ordinary part of the cohomology of $\mathscr{D}^{a,b}(3-q)$ at Iwahori level.

  Given an automorphic representation $\Pi$ of $G$ as in \S 10 of \emph{op.cit.}, of weight $(a,b)$, good ordinary at $p$, we can identify the $\Pi^*$-eigenspace in the ordinary cohomology at Iwahori level with the $\Pi^*$-eigenspace at prime-to-$p$ level, via a suitable Hecke operator (compare Note 17.1.6 of \cite{LZ20} for Siegel-parahoric level). We label the Hecke parameters of $\Pi$ as $(\alpha, \beta, \gamma, \delta)$ in increasing order of valuation, so $v_p(\alpha) = 0$ and $v_p(\beta) = a + 1$.

  \begin{theorem}
   There exists a class
   \[  {}_{c_1, c_2} z_{\et} \in H^4_{\et, \Iw}\Big(N_G(\Zp), \Zp(3)\Big) \mathop{\hat\otimes}\Zp[[\Zp^\times]]\]
   such that for all $a,b,q,r$ as above, and any cuspidal automorphic representation $\Pi$ of weight $(a, b)$ ordinary at $p$, the image of ${}_{c_1, c_2} z_{\et}$ under evaluation at $(a,b,q,r)$ and projection into $H^1(\QQ, W_{\Pi_f}^*(-q))$ is $(-2)^{-q} \cE \cdot \operatorname{pr}_{\Pi}\left({}_{c_1, c_2} z_{\et}^{[a,b,q,r]}\right)$, where $\cE$ is the Euler factor
   \begin{multline*}
    \left(1 - \tfrac{p^q}{\alpha}\right)\left(1 - \tfrac{\beta}{p^{1 + q}}\right)
       \left(1 - \tfrac{\gamma}{p^{1 + q}}\right)
       \left(1 - \tfrac{\delta}{p^{1 + q}}\right)\\
       \times
       \left(1 - \tfrac{p^{a +1 + r}\chi_2(p)}{\alpha}\right)
       \left(1 - \tfrac{p^{a +1 + r}\chi_2(p)}{\beta}\right)
       \left(1 - \tfrac{\gamma}{p^{a + 2 + r}\chi_2(p)}\right)
       \left(1 - \tfrac{\delta}{p^{a + 2 + r}\chi_2(p)}\right).
   \end{multline*}
  \end{theorem}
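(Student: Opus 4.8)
The plan is to realise ${}_{c_1,c_2}z_{\et}$ as an instance of \cref{mainthm} for the pair $\cG = \GSp_4$, $\cH = \GL_2 \times_{\GL_1} \GL_2$ with the open-orbit embedding of \cite{LSZ21}, but with $Q_G$ taken to be the upper-triangular Borel $B_G$ rather than the Siegel parabolic, so that $Q_G^0 = N_G$ and $\mathfrak{S} = T_G(\Zp) \cong (\Zp^\times)^3$. First I would check that hypotheses (A) and (B) of \S\ref{sect:pfsetup} still hold for this choice: the $Q_H^0$-orbit of $u$ on the full flag variety $\cF = G/\overline{B}_G$ is Zariski-open because it lies above the open orbit on $G/\overline{Q}_G$ established in \cite{LSZ21}, and (B) holds trivially on taking $S_G^0$ to be the trivial torus. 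The input to the machine is the $\Lambda$-adic Beilinson--Eisenstein class on $\cH$, i.e.\ the Iwasawa-theoretic Eisenstein class of \cite{KLZ17} assembled from the classes of \cite{Kings-Eisenstein}; it lies in $H^2_{\Iw}(Q_H^0, \cV^{H,\min}_{\mu,\cO}) \hat\otimes \Zp[[\Zp^\times]]$ and is fibrewise the image of a motivic class, the extra $\Zp[[\Zp^\times]]$ being the Iwasawa variable of the Eisenstein data (specialising at $r$ to the finite-level input of weight $(a,b,q,r)$). Running the construction of \cref{def:xi}--\cref{mainthm} $\Zp[[\Zp^\times]]$-linearly then produces
\[
 {}_{c_1,c_2}z_{\et} := \xi_{G,\infty} \in e'_{B_G}\, H^4_{\et,\Iw}\bigl(N_G(\Zp),\Zp(3)\bigr) \hat\otimes \Zp[[\Zp^\times]],
\]
where $4 = i + c$ with $i = 2$ and $c = 2\bigl(\dim_\CC Y_\cG - \dim_\CC Y_\cH\bigr) = 2$, and the Tate twist $(3)$ is the one carried by the $\GSp_4$ Euler system classes.

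I would then identify the classical specialisations. By the moment-map formula of \cref{mainthm}, together with the parahoric-level description of the previous section, the evaluation of ${}_{c_1,c_2}z_{\et}$ at a point $(a,b,q,r)$ --- which under $T_G(\Zp) \times \Zp^\times \cong (\Zp^\times)^4$ corresponds to the highest weight of $\mathscr{D}^{a,b}(-q)$ together with the Eisenstein parameter $r$ --- equals, up to the invertible normalisation $(\cT'_\eta)^{-r} e'_{B_G}$, the finite-level pushforward of the Eisenstein class of weight $(a,b,q,r)$ at Iwahori/Siegel-parahoric level. This is the same construction as ${}_{c_1,c_2}z_{\et}^{[a,b,q,r]}$ except that the branching map for algebraic representations is normalised here via the element $u$; as explained in \cref{rmk:knowncases}, the discrepancy between this normalisation and the one used in \cite{LSZ21} accounts for the scalar $(-2)^{-q}$ in the statement. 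Thus, before projecting to $\Pi$, the two classes differ only by $(-2)^{-q}$ and by a change of level at $p$ from the level at which $\xi_{G,\infty}$ naturally specialises to the hyperspecial level of ${}_{c_1,c_2}z_{\et}^{[a,b,q,r]}$.

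The remaining step, which I expect to be the main obstacle, is to compute the Euler factor $\cE$ produced by this change of level followed by $\pr_\Pi$. Using the identification of the ordinary $\Pi^*$-eigenspace at Iwahori level with the $\Pi^*$-eigenspace at hyperspecial level via the Hecke operator of \cite[Note 17.1.6]{LZ20} (extended to the Borel-ordinary situation using good ordinarity, $v_p(\alpha) = 0 < v_p(\beta) = a+1 < \cdots$), one finds that $\cE$ factors as a product of two local contributions. The first four bracketed terms are the interpolation factor for the spin Galois representation at the cyclotomic twist $q$, coming from the passage between the spherical test vector at $p$ on the $\GSp_4$-side and the ordinary-stabilised one --- essentially a ratio of local spin zeta integrals, whose value in terms of $\alpha,\beta,\gamma,\delta$ I would extract from the local computations of \cite{loeffler-zeta2}; the last four bracketed terms are the local Euler factor of $\Pi \otimes \chi_2$ at the point $a+1+r$, coming from the comparison between the spherical Schwartz datum $\phi_{2,S}$ at $p$ and the vector selected by the Eisenstein family, computed via the local Rankin--Selberg integrals of \cite[\S8]{LSZ21}. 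Assembling these through the interpolation formula of \cref{mainthm} gives the stated expression; the delicate part is tracking which of $\alpha,\beta,\gamma,\delta$ attaches to which bracket and keeping the twist $r$ and nebentypus $\chi_2$ aligned under the functional-equation symmetry $x \mapsto \alpha\delta/x$. That every bracket of $\cE$ has the shape $1 - (\text{element of strictly positive }p\text{-adic valuation})$, hence is a $p$-adic unit, is a consistency check: $\cE$ is a genuine interpolation factor and is nonzero, so the theorem exhibits ${}_{c_1,c_2}z_{\et}$ as determining all the classes ${}_{c_1,c_2}z_{\et}^{[a,b,q,r]}$ up to explicit units.
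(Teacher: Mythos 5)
Your proposal diverges from the paper's argument at the very first step, and the divergence introduces a genuine gap. The paper does \emph{not} run the machinery on $\cG = \GSp_4$ and $\cH = \GL_2 \times_{\GL_1}\GL_2$ directly: it replaces them by the augmented groups $\wG = \GSp_4 \times \GL_1$ and $\wH = (\GL_2 \times_{\GL_1}\GL_2)\times\GL_1$, viewing $\wH$ as a double cover of $\GL_2\times\GL_2$ via $(h_1,h_2,z)\mapsto(h_1,zh_2)$, and takes $Q_{\wH}^0$ to be the preimage of the product-of-mirabolics of $\GL_2\times\GL_2$. The extra $\GL_1$ factor is what makes the $\Zp[[\Zp^\times]]$ in the statement an honest piece of the Iwasawa algebra $\cO[[\mathfrak{S}_{\wG}]]\cong\Zp[[(\Zp^\times)^4]]$, so that the fourth parameter $r$ becomes an algebraic weight of $\wG$ and \cref{mainthm} applies verbatim in four variables. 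Your plan to keep $\cG = \GSp_4$ and carry $\Zp[[\Zp^\times]]$ along ``externally'' is not an instance of the twist-compatibility theorem: that theorem interpolates only over $X^\bullet(S_G/S_G^0)$, which for $Q_G = B_G$ and $S_G^0 = 1$ is a three-parameter lattice, and nothing in the paper's formalism tells you that specialising the external Eisenstein variable at $r$ reproduces the weight-$(a,b,q,r)$ construction on the nose. That is precisely the content that the augmented-group device supplies for free.

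There is also a concrete dimension obstruction that your open-orbit check glosses over. The full flag variety $\cF = G/\overline{B}_G$ of $\GSp_4$ is $4$-dimensional, whereas the relevant $Q_H^0\subset\GL_2\times_{\GL_1}\GL_2$ from \cite{LSZ21} (the product of mirabolics, intersected with the fibred-product condition $\det h_1 = \det h_2$) is only $3$-dimensional, so it cannot have an open orbit on $\cF$. The group $Q_{\wH}^0 = \{(\stbt{x}{\star}{0}{1},\stbt{xy}{\star}{0}{y^{-1}},y)\}$ used in the paper has the extra dimension needed, and the paper notes it acts on $\cF$ with trivial stabiliser (which is also what makes hypothesis (B) hold with $S_{\wG}^0$ the $\GL_1$ factor). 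Asserting (A) ``because it lies above the open orbit on $G/\overline{Q}_G$'' is not an argument: openness does not automatically propagate from $G/\overline{P}_{\mathrm{Sieg}}$ (dimension $3$) to $G/\overline{B}_G$ (dimension $4$) unless the acting group has enough room on the fibre, and here it does not.

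Two further remarks. First, your proof conflates the symbol $r$ of the norm-compatibility tower ($V_r$, $(\cT'_\eta)^{-r}e'_{B_G}$) with the weight parameter $r$ in the theorem; these are unrelated, and the displayed evaluation at $(a,b,q,r)$ uses the latter. Second, the final paragraph — the comparison between Iwahori and spherical level via the zeta integrals of \cite{loeffler-zeta2}, and the origin of $(-2)^{-q}$ as a normalisation discrepancy in the branching maps, as explained in \cref{rmk:knowncases} — is in line with the paper; this part of your sketch is sound, but it sits on top of an incorrect setup.
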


  \begin{remark}
   This refines \cite[Theorem 17.2.2]{LZ20}, in which $(a, q)$ were varying and $(b, r)$ were fixed (but only Siegel-ordinarity was assumed, rather than full Borel-ordinarity as here). In the above result, the parameter space is four-dimensional and all four parameters $(a,b,q,r)$ are varying independently.
  \end{remark}

  \begin{proof}We consider the groups $\wG = \GSp_4 \times \GL_1$, and $\wH = (\GL_2 \times_{\GL_1} \GL_2) \times \GL_1$. There is an obvious embedding $\wH \into \wG$, extending the embedding $H \into G$ used in \cite{LSZ21}. We can also view $\wH$ as a double cover of $\GL_2 \times \GL_2$ via $(h_1, h_2, z) \mapsto (h_1, zh_2)$. All of these maps are compatible with the standard Shimura data, if we take the Shimura cocharacter to project trivially into the $\GL_1$ factors of $\wG$ and $\wH$. The cohomology group $H^4_{\et}\Big(N_G(\Zp), \Zp(3)\Big) \mathop{\hat\otimes}\Zp[[\Zp^\times]]$ then has a natural interpretation as \'etale cohomology for $\wG$ with level $N_{\wG}(\Zp)$.

  We apply the machinery of \cite{loeffler-spherical} and the present paper to the groups $\wG$ and $\wH$, taking the input class $z_{\wH}^{[0]}$ to be the pullback of the Eisenstein--Iwasawa class for $\GL_2 \times \GL_2$; the associated mirabolic subgroup $Q_{\wH}^0$ is then given by the preimage of the usual mirabolic of $\GL_2 \times \GL_2$, namely
  \[ \left\{ \left(
   \left(\begin{matrix} x & \star \\ 0 & 1 \end{matrix} \right),
   \left(\begin{matrix} xy & \star \\ 0 & y^{-1} \end{matrix} \right),
   y
   \right) : x, y \in \mathbf{G}_m\right\}.\]
  This group has an open orbit on the Borel flag variety of $\wG$, with trivial stabiliser. We can now obtain a family of classes in the Iwahori level tower for $\wG$. Comparing these with their analogues at prime-to-$p$ level is done via an Euler-factor computation, for which we refer to \cite{loeffler-zeta2} (the second of the two formulae given in \S 6.3 of \emph{op.cit.}).\end{proof}

 \subsection{The setting of \tps{\cite{HJS20}}{[HJS20]}}

  We now let $\cG = \GSp_4 \times_{\GL_1} \GL_2$. As in \emph{op.cit.} we can construct classes
  \[ {}_e z^{[a,b,c,q]}_{\et} \in H^5_{\et}\left( G(\Zp), \mathscr{W}^{a,b,c,*}_{\Zp}(3-q)\right)\]
  where $\mathscr{W}^{a,b,c}$ is a certain representation of $G$ for $a,b,c \ge 0$, and $\max(a, c) \le q \le \min(a+b, a+c)$. As in the previous section, we are fixing some arbitrary test data away from $p$, including the smoothing parameter $e > 1$; and we are taking the test data at $p$ to be the spherical data. We have also relabelled the parameters slightly, so $q$ is $a + r$ in the notation of \emph{op.cit.}.

  \begin{theorem}
   There exists a class ${}_e z_{\Iw} \in e'_{B_G} H^5_{\et, \Iw}\left( N_G(\Zp), \Zp(3)\right)$ whose specialisation at an automorphic representation $\Pi \times \Sigma$ of weight $(a,b,c)$ twisted by $\det^q$, with $\Pi$ and $\Sigma$ good ordinary at $p$, is given by $\cE \cdot \pr_{\Pi \times \Sigma}\left({}_e z^{[a,b,c,q]}_{\et}\right)$, where $\cE$ is the Euler factor
   \[ \prod\left\{ \left(1 - \tfrac{\lambda}{p^{2 + q}}\right) : \lambda = \beta \fb,\gamma \fa, \gamma\fb, \delta \fa,  \delta\fb \right\} \cdot \prod \left\{ \left(1 - \tfrac{p^{1 + q}}{\lambda}\right) : \lambda = \alpha \fa, \alpha \fb, \beta \fa\right\} \]
  \end{theorem}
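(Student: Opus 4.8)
The plan is to follow the template of the preceding proof: realise the family using the ``norm-compatibility machine'' of \cite{loeffler-spherical} together with \cref{mainthm}, and then pin down its specialisations by an Euler-factor computation at Iwahori level. First I would fix the group-theoretic data. Take $\cG = \GSp_4 \times_{\GL_1} \GL_2$, whose maximal torus is already four-dimensional --- so, in contrast to the preceding proof, no auxiliary $\GL_1$ factor is needed --- with $Q_G = B_G$ the upper-triangular Borel, and let $\cH$ be the subgroup of $\cG$ used in \cite{HJS20}, with $Q_H^0$ the corresponding mirabolic-type subgroup (the preimage of the mirabolic of the relevant product of copies of $\GL_2$). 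These are arranged so that the embedding $\cH \into \cG$ is compatible with the standard Shimura data, and the input class $z_H^{[0]}$ will be the Eisenstein--Iwasawa class built out of $\GL_2$ Eisenstein classes exactly as in \emph{op.~cit.} The open-orbit hypotheses of \S\ref{sect:pfsetup} --- that some $u \in \cF(\Zp)$ on the Borel flag variety $\cF$ of $\cG$ has Zariski-open $Q_H^0$-orbit satisfying (B) --- are precisely the geometric input that makes the construction of ${}_e z^{[a,b,c,q]}_{\et}$ possible in the first place, hence available from \cite{HJS20}; and one should check that the $Q_H^0$-admissible weights are exactly those $\lambda = \lambda(a,b,c,q)$ with $\max(a,c) \le q \le \min(a+b,a+c)$, matching the weight range of the statement (this is the condition $(V^G_\lambda)^{Q_H^0} \ne 0$ of \cref{prop:branchingK}).

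Granting this, the machinery of \cite{loeffler-spherical} produces a norm-compatible family $\xi_{G,\infty} = \xi_{G,\infty}^{[0]}$ lying in $e'_{B_G} H^5_{\et,\Iw}(N_G(\Zp), \Zp(3))$, which by \cref{thm:rockwood} is finitely generated over the relevant Iwasawa algebra; identifying this algebra with $\Zp[[T_G(\Zp)]]$ so that the point $(a,b,c,q)$ corresponds to a highest weight, we take ${}_e z_{\Iw} \coloneqq \xi_{G,\infty}$. By \cref{mainthm}, for every $Q_H^0$-admissible $\lambda$ the moment $\mom^{[\lambda]}_{G,V_r}(\xi_{G,\infty})$ equals $(\cT_\eta')^{-r} e'_{B_G} \cdot \xi^{[\lambda]}_{G,r}$; passing to the level $J_G$ (here the Iwahori subgroup, since $Q_G = B_G$) via the reformulation given in the section on interpolating properties, the corresponding class $\xi^{[\lambda]}_{J_G} = \cB^{-1}\bigl(e'_Q \cdot (u_\star \circ \iota_\star)(z^{[\mu]}_{J_H^0})\bigr)$ is constructed by exactly the recipe defining ${}_e z^{[a,b,c,q]}_{\et}$ (with $\sW^{a,b,c,*}_{\Zp}(3-q)$ a lattice in the irreducible $V_\lambda$), except that the test data at $p$ is parahoric --- the spherical vectors replaced by their projections onto the $Q_G$-fixed line --- rather than the spherical data at full level $G(\Zp)$ used to define ${}_e z^{[a,b,c,q]}_{\et}$.

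The substance of the proof is the final comparison: after projecting to the $\Pi\times\Sigma$-eigenspace for a $\Pi\times\Sigma$ of weight $(a,b,c)$ twisted by $\det^q$ with both factors good ordinary, to show $\pr_{\Pi\times\Sigma}(\xi^{[\lambda]}_{J_G}) = \cE \cdot \pr_{\Pi\times\Sigma}({}_e z^{[a,b,c,q]}_{\et})$. I would pair both classes against an ordinary test vector $v$ in the complementary-degree compactly-supported cohomology and invoke the identity $\langle \xi^{[\lambda]}_{J_G}, v\rangle = \langle z^{[\mu]}_{J_H^0}, \iota^* u^* \bar v\rangle$ to reduce to a local ``open-orbit cohomological period'' at $p$; this local quantity is an unramified zeta integral computing (a ratio of values of) the degree-$8$ $L$-function $L(\Pi \times \Sigma, s)$, and comparing the spherical and parahoric normalisations of the vectors at $p$ produces exactly the product $\cE$. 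The partition of the eight products of Satake parameters $\{\alpha,\beta,\gamma,\delta\}\times\{\fa,\fb\}$ into the five appearing in the shape $(1-\lambda/p^{2+q})$ and the three appearing as $(1-p^{1+q}/\lambda)$ is dictated by the good-ordinary filtrations on $\Pi$ and $\Sigma$, together with the range $\max(a,c)\le q\le\min(a+b,a+c)$, which also guarantees that every factor of $\cE$ is $p$-adically integral, so that the identification of specialisations is a genuine identity in integral cohomology. This local computation --- the $\GSp_4\times\GL_2$ analogue of the one carried out for $\GSp_4$ in \cite{loeffler-zeta2} --- is where I expect essentially all the difficulty to lie: the zeta-integral bookkeeping is heavier than in the $\GSp_4$ case, and the good-ordinarity hypotheses on both $\Pi$ and $\Sigma$ must be tracked carefully in order to be sure which of the eight linear factors is ``inverted'' and that the resulting expression really is integral. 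Once this is in hand, the landing of the specialised class in the expected Galois cohomology group is immediate from the étale formalism, as in \cite{HJS20}.
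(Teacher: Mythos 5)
Your proposal follows essentially the same (very terse) route the paper takes: apply the norm-compatibility machinery and \cref{mainthm} with $\cG = \GSp_4 \times_{\GL_1} \GL_2$ (whose torus is already four-dimensional, so no auxiliary $\GL_1$) and $\cH = \GL_2 \times_{\GL_1} \GL_2$, obtain a family at Iwahori level, and delegate the Euler-factor comparison with the spherical-level classes of \cite{HJS20} to the zeta-integral computations of \cite{loeffler-zeta2}.

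The one point you should sharpen concerns $Q_H^0$. You describe it as ``the preimage of the mirabolic of the relevant product of copies of $\GL_2$,'' which is ambiguous and, read literally (mirabolic in both factors), would be wrong: a mirabolic$\times_{\GL_1}$mirabolic has dimension $3$, strictly less than $\dim \cF = 5$ for the full Borel flag variety of $\cG$, so it cannot have an open orbit and condition (A) of \S\ref{sect:pfsetup} would fail. What is actually needed is the mirabolic of \emph{one} $\GL_2$ factor fibred with the \emph{whole} of the other $\GL_2$ over $\GL_1$ (dimension $5$), corresponding to taking the Eisenstein--Iwasawa class on that single factor alone; the paper states this explicitly. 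This choice is also what makes the second $\GL_2$ available to carry the cusp form $\Sigma$ rather than an Eisenstein series. With that correction, your identification of the $Q_H^0$-admissibility condition with the range $\max(a,c)\le q\le\min(a+b,a+c)$, and your outline of the pairing/zeta-integral argument behind the factor $\cE$, agree with what the paper asserts (the latter being simply cited rather than proved there).
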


  Here $\fa$ and $\fb$ are the Hecke parameters of $\Sigma$ (with $v_p(\fa) = 0$, $v_p(\fb) = c + 1$).

  \begin{proof}
   We apply the machinery of the previous sections with $\cH= \GL_2 \times_{\GL_1} \GL_2$ embedded in the usual way, and an Eisenstein class on the first factor of $\cH$ alone. As before, we obtain a family of classes at Iwahori level; and these are related to their analogues at prime-to-$p$ level via an Euler-factor computation, which is again carried out in \cite{loeffler-zeta2} (section 7.1).
  \end{proof}

  This result is an important ingredient in the proof of new cases of the Bloch--Kato conjecture for $\GSp_4 \times \GL_2$ in \cite{LZ21-erl}.

 \subsection{The setting of \tps{\cite{LZvista}}{[LZ20b]}}

  Finally, we consider the triple product of cusp forms, as in \cite{LZvista}. This requires some care owing to the necessity of choosing a self-dual twist (compare \S 3.3 of op.cit.).

  Let $\cG = \GSp_4 \times \GL_2 \times \GL_2 \times \GL_1$, and let $\cH = \GL_2 \times_{\GL_1} \GL_2$ embedded via
  \[ (h_1, h_2) \mapsto (h_1 \boxplus h_2, h_1, h_2, \det h_1^{-1}).\]
  In this setting, we let $Q_H^0 = H$, and $Q_G$ the upper-triangular Borel of $G$. Then one computes that $H \cap u \bar{Q}_G u^{-1}$ is a copy of $\mathbf{G}_m$, embedded as $z \mapsto (z, z, z, z^{-2})$. We take $S_G^0$ to be the subgroup
  \[
   \left\{ \left(
     \left( \begin{smallmatrix} \star\\&\star\\&&z\\&&&z \end{smallmatrix}\right),
     \left( \begin{smallmatrix} \star \\ & z \end{smallmatrix} \right),
     \left( \begin{smallmatrix} \star \\ & z \end{smallmatrix} \right),
     z^{-2}
    \right) : z \in \mathbf{G}_m\right\}.
  \]
  With these choices, the group $\fS$ is once more isomorphic to $(\Zp^\times)^4$; and a $p$-adic modular form for $\cG$, of level $Q_G^0(\Zp)$, is identified with a triple of $p$-adic modular forms for the groups $\GSp_4$, $\GL_2$ and $\GL_2$ such that the product of their weight-characters is a square, together with a choice of square root of this character. Applying the machinery of the previous sections, combined with yet another zeta-integral computation from \cite{loeffler-zeta2} (\S 8 of \emph{op.cit.}), gives exactly Theorem 8.2.5 of \cite{LZvista}, showing that diagonal-cycle cohomology classes for $\GSp_4 \times \GL_2 \times \GL_2$ vary in $p$-adic families.

 \section*{Acknowledgements}

  We would like to thank both Arshay Sheth and the anonymous referee for their valuable feedback on the original draft of this paper.

\providecommand{\bysame}{\leavevmode\hbox to3em{\hrulefill}\thinspace}
\renewcommand{\MR}[1]{%
 MR \href{http://www.ams.org/mathscinet-getitem?mr=#1}{#1}.
}
\newcommand{\articlehref}[2]{\href{#1}{#2}}

\end{document}